\title{Groups acting on CAT(0) cube complexes with uniform exponential growth}
\author{Radhika Gupta$^1$}
\address{$^1$ Temple University,
Department of Mathematics,
%Wachman Hall,
1805 N. Broad Street
Philadelphia, PA. 19122
}
\email{radhikagupta.maths@gmail.com}
\author{Kasia Jankiewicz$^2$}
\address{$^2$ University of California Santa Cruz,
Department of Mathematics,
1156 High Street,
Santa Cruz, CA 95064}
\email{kasia@ucsc.edu}
\author{Thomas Ng$^3$}
\address{$^3$ Technion -- Israel Institute of Technology,
Mathematics Department,
Haifa, Israel. 32000}
\email{thomas.ng.math@gmail.com}
\date{\today}
\begin{document}
\begin{abstract}

We study uniform exponential growth of groups acting on CAT(0) cube complexes.  We show that groups acting without global fixed points on CAT(0) square complexes either have uniform exponential growth or stabilize a Euclidean subcomplex.  This generalizes the work of Kar and Sageev that considers free actions.  Our result lets us show uniform exponential growth for certain groups that act improperly on CAT(0) square complexes, namely, finitely generated subgroups of the Higman group and triangle-free Artin groups.  We also obtain that non-virtually abelian groups acting freely on CAT(0) cube complexes of any dimension with isolated flats that admit a geometric group action have uniform exponential growth.   

\end{abstract}

\maketitle
%%%%%%%%%%%%%%%%%%%%%%%%%%%%%%%%%%
%%%%%%%%%%%%%%%%%%%%%%%%%%%%%%%%%%
\section{Introduction}
%%%%%%%%%%%%%%%%%%%%%%%%%%%%%%%%%%
%%%%%%%%%%%%%%%%%%%%%%%%%%%%%%%%%%

%	
%	Decide on font for hyperplanes.  Choose letters
%

In this article, we continue the inquiry to determine which groups that act on CAT(0) cube complexes have uniform exponential growth. Let $G$ be a group with finite generating set $S$ and corresponding Cayley graph $\text{Cay}(G,S)$ equipped with the word metric. Let $B(n,S)$ be the ball of radius $n$ in $\text{Cay}(G,S)$. The \emph{exponential growth rate of $G$ with respect to $S$} is defined as $$ w(G,S) := \lim_{n \to \infty} |B(n,S)|^{1/n}.$$
The \emph{exponential growth rate of $G$} is defined as $$w(G):= \inf\braces{w(G,S) \;|\; S \text{ finite generating set}}.$$ 
We say a group $G$ has \emph{exponential growth} if $w(G,S) >1$ for some (hence every) finite generating set $S$. A group $G$ is said to have \emph{uniform exponential growth} if $w(G)>1$. The reader is referred to de la Harpe's book \cite{delaharpe} for more details on growth of groups. 

Many groups of exponential growth are known to have uniform exponential growth, for instance, non-elementary hyperbolic groups \cite{Koubi}, relatively hyperbolic groups \cite{Xie},  solvable groups \cite{Alperin,Osin:solvable},
nontrivial amalgamated free products and HNN-extensions \cite{BucherHarpe}, one-relator groups \cite{GrigorchukHarpe}, linear groups over a field of characteristic zero \cite{EMO}, and many hierarchically hyperbolic groups \cite{ANS}. The first examples of groups with exponential growth that do not have uniform exponential growth were introduced by Wilson \cite{Wilson}.  

Kar and Sageev showed that if a group acts freely on a CAT(0) \emph{square} complex, then either it has uniform exponential growth or it is virtually abelian \cite{KarSageev} . 
In this article, we generalize the result of Kar and Sageev by removing the assumption of a free action. 
By semisimplicity of cubical isometries \cite[Theorem~1.4]{Haglund:semisimplicity}, freeness of the action implies that the group is torsion-free.
Wise shows, however, that groups actings geometrically on CAT(0) cube complexes need not even be virtually torsion-free \cite[Section~9]{Wise:antitorus}. 
We show that even if the group contains elliptic elements, we can still get uniform exponential growth.

\begin{restatable}{thmAlph}{ThmA}\label{thm:2Dtorsion}
	Let $G$ be a finitely generated group acting \emph{without global fixed point} on a CAT(0) square complex $X$. Then either $G$ has uniform exponential growth with $w(G) \geq \sqrt[600]{2}$ or $G$ stabilizes a flat or line in $X$. 
\end{restatable}

See \Cref{sec:background} for the definition of a flat.  
In the setting of groups acting \emph{properly} on CAT(0) square complexes, stabilizing a flat can be upgraded to virtually abelian.

\begin{restatable*}{cor}{CorProper}\label{cor:Proper}
Let $G$ be a finitely generated group that acts \emph{properly} on a CAT(0) square complex. Then either $G$ has uniform exponential growth with $w(G) \geq \sqrt[600]{2}$, or $G$ is virtually abelian.
\end{restatable*}

Most known results on uniform exponential growth can be shown by producing a constant $M > 0$ such that in any generating set there exists a pair of elements with word length at most $M$ that generate a free semigroup or subgroup.

\begin{defn}[$N$-short subgroup]
	Let $G$ be a finitely generated group, with a finite generating set $S$. We say that a sub(semi)group $H$ in $G$ is \emph{$N$-short with respect to $S$}, if there exists a finite collection of words with $S$-length at most $N$ that generate $H$. We say $G$ contains a \emph{uniformly $N$-short} $H$, if for every finite generating set $S$ there exists a copy of $H$ in $G$ that is $N$-short with respect to $S$.
\end{defn}
 
Existence of uniformly $N$-short free subgroups or free semigroups in a group $G$ immediately gives the uniform bound $w(G) \geq \sqrt[N]{2}$ (see \cite[Proposition~2.4]{AlperinNoskov}).
This was used by Grigorchuk and de la Harpe \cite[Section~(A)]{GrigorchukHarpeGrowth} to show uniform exponential growth of torsion-free hyperbolic groups. 
Their work builds upon work of Gromov \cite[Theorem~5.3(E)]{Gromov} that was proved by Delzant \cite[Th\'{e}or\`{e}me~I]{DelzantTwoGen}.  

We say that a group $G$ has \emph{locally uniform exponential growth} if there is a constant $w_0 > 1$ such that every finitely generated subgroup $H$ of $G$ either has $w(H) \geq w_0$ or $H$ is virtually abelian.  
This property is sometimes called ``uniform uniform exponential growth''.
In the setting of groups acting properly on CAT(0) cube complexes, this is closely related to the strong Tits alternative of Sageev and Wise \cite[Theorem~1.1]{SageevWise} for a given subgroup $H$.  
Such bounds on the growth of subgroups have been shown by Mangahas for the mapping class group \cite{Mangahas}, depending on the complexity of the surface, and by Kar and Sageev for groups acting freely on square complexes \cite{KarSageev}.  
The bounds on exponential growth in this paper are also uniform over finitely generated subgroups with dependence only on the dimension of the cube complex.
We remark that bounds on uniform exponential growth need not pass to subgroups.  For example, the free product of two copies of Wilson's group has $w(G) \geq \sqrt[4]{2}$ coming from its action on its Bass-Serre tree.

The proof of Theorem~\ref{thm:2Dtorsion} relies on constructing a hyperbolic isometry with uniformly bounded word length from pairs of elliptic isometries of a 2-dimensional cube complex. Our construction also works for 3-dimensional cube complex. 

\begin{restatable*}{prop}{ellipticHyperbolic}\label{prop:elliptic hyperbolic}  Let $a$ and $b$ be a pair of isometries of a CAT(0) cube complex $X$ of dimension two or three. Then either 
\begin{enumerate}
 \item there exists a hyperbolic element in $\la a, b \ra$ whose length in $a,b$ is at most $L$, where $L$ is a constant that only depends on $\dim(X)$, or,
 \item $\langle a, b\rangle$ fixes a point in $X$.
\end{enumerate} 
If $X$ is 2-dimensional, then $L=12$.  
\end{restatable*}

We use the fact that the Burnside groups $B(2,2)$ and $B(2,6)$ are finite in the proof of \Cref{prop:elliptic hyperbolic}. 
However, this means that we cannot use our proof to obtain a similar result in higher dimensions. It remains open whether the above proposition holds in higher dimensions. 

\vspace{0.8cm}

In this article, we also impose extra conditions on the cube complex to show uniform exponential growth for groups acting on cube complexes of arbitrary dimension. 
In particular, we show the following (see Section~\ref{subsec:isolated flats} for definition of a CAT(0) cube complex with isolated flats).

\begin{restatable}{thmAlph}{ThmB}\label{thm:CAT(0)IFP}
There exists a constant $w_\dimX > 1$ depending only on $\dimX \in \N$ such that the following holds. 
Let $X$ be a CAT(0) cube complex of dimension $\dimX$ with isolated flats that admits a geometric group action. 
For any finitely generated group $G$ acting \emph{freely} on $X$, either  $w(G) \geq w_d$ or $G$ is virtually abelian.  
\end{restatable} 

Note that if the action of the group $G$ on $X$ is free and geometric then $G$ is relatively hyperbolic. Uniform exponential growth for relatively hyperbolic groups was proved by Xie \cite{Xie}.  However, we obtain bounds on growth depending only on dimension of $X$ rather than hyperbolicity constants.  Moreover, our result shows that $G$ has locally uniform exponential growth, which is new.

In the proof of \Cref{thm:CAT(0)IFP}, we use a variation (\Cref{lem:IFP dichotomy}) of work of Jankiewicz \cite{Jankiewicz} on cubical dimension of small cancellation groups (see Lemma~\ref{lem:Jankiewicz}). 
If instead of isolated flats, we impose the condition of hyperbolicity on our CAT(0) cube complex $X$, then we can use the same proof strategy as for Theorem~\ref{thm:CAT(0)IFP} to relax the requirement of a geometric group action to a weakly properly discontinuous (WPD) action.  

\begin{restatable*}{prop}{hyperbolic}\label{cor:hyperbolic} 
	There exists a constant $w_d > 1$ depending only on $d \in \N$ such that the following holds. 
	Let $X$ be a CAT(0) cube complex of dimension $\dimX$ that is \emph{hyperbolic}. For any finitely generated group $G$ acting \emph{freely} and \emph{weakly properly discontinuously} on $X$, either $w(G) \geq w_d$ or $G$ is virtually infinite-cyclic. 
\end{restatable*}

Similar results on uniform exponential growth for groups acting on hyperbolic spaces have been obtained before. 
In \cite[Theorem~13.1]{BreuillardFujiwara} Breuillard and Fujiwara show that if $G$ is a finitely generated group of isometries of a Gromov hyperbolic space, then either the exponential growth rate is bounded from below by a positive constant depending on the joint minimal displacement and the hyperbolicity constant or $G$ fixes a pair of points in the boundary. 
Before that, Besson, Courtois and Gallot showed that given $a>0, n \in \mathbb{N}$, there is a constant $c(n,a) >0$ such that if $M$ is a complete Riemannian manifold of dimension $n$ with pinched sectional curvature $\kappa_M \in [-a^2, -1]$ and $\Gamma$ is a finitely generated discrete group of isometries of $M$ then either $w(\Gamma) > e^{c(n,a)} >1$ or $\Gamma$ is virtually nilpotent \cite[Theorem~1.1]{BCG}.  The action of hyperbolic manifold groups and more generally $\delta$-hyperbolic groups is so nice that it is even possible to exhibit uniformly short free subgroups \cite[Theorem~1.1]{DeyKapovichLiu} \cite[Th\'{e}or\`{e}me~5.1]{Koubi}. 

A feature of our results is that they depend only on the dimension of the cube complex.  Our results may be useful in understanding the cubical dimension of finitely generated groups as in work of Jankiewicz \cite{Jankiewicz}.  

\vspace{0.8cm}

Besides groups that act properly on CAT(0) cube complexes, the results here allow us to give the first proofs of uniform exponential growth for several groups that admit improper actions on CAT(0) square complexes.  
We show that information about vertex stabilizers can be leveraged to give bounds on exponential growth.  

\begin{restatable*}{cor}{improperSquareCplx}\label{cor:improper2D}
	Suppose $G$ is a finitely generated group that acts by isometries on a CAT(0) square complex $X$ such that finitely generated subgroups of the vertex stabilizers are either virtually abelian or have uniform exponential growth bounded below by $w_0>1$. Then for any finitely generated subgroup $H \leq G$ one of the following holds:
	\begin{enumerate}
		\item $H$ has uniform exponential growth with $w(H) \geq \min\braces{\sqrt[600]{2}, w_0}$, or
		
		\item $H$ is virtually abelian, or
		
		\item $H$ stabilizes a flat or line in $X$.
	\end{enumerate}
\end{restatable*}

In particular, this lets us expand the list of acylindrically hyperbolic groups that are known to have locally uniform exponential growth.  In particular, we show that the Higman group and Artin groups with triangle-free defining graphs have locally uniform exponential growth (see \Cref{cor:Higman} and \Cref{thm:FCtypeArtin}).  

%==========================================
\subsection*{Organization}
%==========================================
In \Cref{sec:background}, we review background on cube complexes, CAT(0) spaces with isolated flats, past results related to building free semigroups and uniform exponential growth in cube complexes.  
In \Cref{sec:isolated flats}, we 
prove \Cref{thm:CAT(0)IFP} and \Cref{cor:hyperbolic}.  
In \Cref{sec:generating hyperbolic isometry}, we 
construct uniformly short hyperbolic isometries in any action on a cube complex of dimension at most 3 without global fixed point.  
This is the key tool needed to prove \Cref{thm:2Dtorsion} and \Cref{cor:Proper} in \Cref{sec:torsion2D}.  
We go on to prove locally uniform exponential growth of the Higman group and triangle-free Artin groups in  \Cref{sec:Applications} where we prove \Cref{cor:improper2D}.  

%==========================================
\subsection*{Acknowledgements}
%==========================================
The authors are grateful to Talia Fern\'os for helping to clarify the statement of Theorem~\ref{thm:2Dtorsion}
{color{blue}
	and the anonymous referee for helpful corrections.
} 
RG would like to thank Michah Sageev, Mark Hagen, and Giles Gardam for helpful conversations. She was partially supported by Israel  Science  Foundation Grant 1026/15 and EPSRC grant EP/R042187/1.
TN would like to thank Dave Futer, Sam Taylor, and Matthew Stover for their guidance and support on this project.  He would also like to thank Carolyn Abbott and Davide Spriano for bringing our attention to the Higman group.
TN was partially supported by NSF grant DMS--1907708, ISF grant 660/20, an AMS--Simons travel grant, and at the Technion by a Zuckerman Fellowship.
TN and KJ would like to thank the members of a virtual reading group--Yen Duong, Carolyn Abbott, Teddy Einstein, and Justin Lanier for helpful conversations on \cite{KarSageev}. 
The authors acknowledge support from U.S. National Science Foundation grants DMS 1107452, 1107263, 1107367 ``RNMS: Geometric Structures and Representation Varieties'' (the
GEAR Network).
The authors would also like to acknowledge the conference `Graphs, surfaces, and cube complexes' which was part of 2017-2018 Warwick EPSRC Symposium, where we started this collaboration.  
We also thank the anonymous referee for their helpful comments.

%%%%%%%%%%%%%%%%%%%%%%%%%%%%%%%%%%
%%%%%%%%%%%%%%%%%%%%%%%%%%%%%%%%%%
    \section{Background} \label{sec:background}
%%%%%%%%%%%%%%%%%%%%%%%%%%%%%%%%%%
%%%%%%%%%%%%%%%%%%%%%%%%%%%%%%%%%%

In this section we review fundamentals on cube complexes and past results on building free semigroups using CAT(0) cube complexes.
For more basics on CAT(0) cube complexes see Sageev's notes \cite{SageevPCMI}.

%==========================================
\subsection{Cube complexes and hyperplanes}
%==========================================
Let $X$ be a CAT(0) cube complex. Let $\Isom(X)$ denote the collection of cubical isometries of $X$. We will assume that any isometric group action on $X$ does not invert hyperplanes.  This is achieved by cubically subdividing $X$ once. 
We denote the fixed point set of a cubical isometry $a \in \Isom(X)$ by $\Fix(a) \subseteq X$. If two points $x,y$ are fixed by $a$, then the CAT(0) geodesic joining them is also fixed by $a$.  Therefore, $\Fix(a)$ is a connected and convex subspace of $X$ (with respect to the CAT(0) metric). 

Convex subcomplexes of CAT(0) cube complexes are particularly well-behaved.  
CAT(0) cube complexes are often regarded as high dimensional generalizations of trees because convex subcomplexes satisfy the \emph{Helly property}, that is, any collection of pairwise intersecting convex subcomplexes have nonempty intersection. 
 
A natural family of convex subspaces are hyperplanes.  
A \emph{hyperplane} is a subspace that separates the complex into two distinct half spaces by cutting every cube it intersects in half.  
We denote a hyperplane in $X$ by $\h$. 
Let $\Hyp(X)$ denote the collection of all hyperplanes of $X$. For a subcomplex $A$ of $X$, let $\Hyp(A)$ be the collection of hyperplanes of $X$ that separate a pair of points in $A$.
Note that when $A$ is convex with respect to the CAT(0) metric these are all the hyperplanes that intersect $A$.
A path joining two vertices of $X$ is called a \emph{combinatorial geodesic} if it is a path of minimum length in the 1-skeleton of $X$ joining the two points.  Note that every edge in a combinatorial geodesic uniquely corresponds to a hyperplane separating the vertices.

\begin{defn}[Cubical convex hull]
 Let $A$ be a subspace of $X$. The \emph{cubical convex hull} of $A$, denoted $\Hull(A)$, is the smallest convex subcomplex of $X$ containing $A$. 
\end{defn}

Hyperplanes give the cube complex a wallspace structure in the sense of Haglund and Paulin \cite{HaglundPaulin:Wallspace}.  
This wallspace structure produces a \emph{dual cube complex} from a collection of hyperplanes of $X$ using Sageev's construction \cite{Sageev:EndsNPC} (see \cite[Lecture~2]{SageevPCMI} for more details).
Since hyperplanes encode combinatorial geodesics in a cube complex, the hyperplanes that cross a subcomplex determine its cubical convex hull. We record this observation.  

\begin{obs}
	\label{obs:cHull}
	Let $A$ be a subcomplex of $X$. Then the subcomplex $\Hull(A)$ is isomorphic to the cube complex dual to $\Hyp(A)$.  
\end{obs}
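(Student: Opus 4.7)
The plan is to proceed in two steps: first verify that $\Hyp(\Hull(A)) = \Hyp(A)$, and then apply the standard consequence of Sageev's construction that identifies a CAT(0) cube complex with the cube complex dual to its own hyperplane wallspace.

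For the hyperplane equality, the inclusion $\Hyp(A) \subseteq \Hyp(\Hull(A))$ is immediate from $A \subseteq \Hull(A)$. For the reverse inclusion, suppose $\h \in \Hyp(X) \setminus \Hyp(A)$. Then $\h$ does not separate any two points of $A$, so $A$ is entirely contained in a single closed halfspace of $\h$. Taking the intersection, over all such $\h$, of the closed halfspace of $\h$ containing $A$ produces a convex subcomplex of $X$ that contains $A$; by minimality of the cubical convex hull, this intersection contains $\Hull(A)$, so no such $\h$ can separate two points of $\Hull(A)$. This yields $\Hyp(\Hull(A)) \subseteq \Hyp(A)$.

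For the second step, I would invoke that any CAT(0) cube complex $Y$ is canonically isomorphic to the cube complex dual to the wallspace $(Y^{(0)}, \Hyp(Y))$ via the map sending a vertex $v \in Y^{(0)}$ to the orientation of $\Hyp(Y)$ that, for each hyperplane, selects the halfspace containing $v$. Under this correspondence, two vertices are adjacent exactly when their orientations differ on a single hyperplane, and $k$-cubes correspond to $k$-tuples of pairwise crossing hyperplanes meeting at a vertex. Applying this identification to $Y = \Hull(A)$, together with the hyperplane equality from the first step, gives the desired isomorphism between $\Hull(A)$ and the cube complex dual to $\Hyp(A)$.

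The only technical point deserving care is justifying in the first step that the intersection of halfspaces containing $A$ is actually a subcomplex (not merely a convex subspace); this is routine, since each halfspace of a hyperplane is itself a subcomplex of $X$ and arbitrary intersections of subcomplexes are subcomplexes. With that in hand, the two-step argument is essentially a bookkeeping consequence of convexity and Sageev's duality.
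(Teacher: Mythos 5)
Your two-step argument is correct and records the standard justification for this fact. The paper itself does not give a proof---it offers only the one-sentence heuristic that hyperplanes encode combinatorial geodesics---so there is no paper argument to contrast with; the hyperplane identity $\Hyp(\Hull(A)) = \Hyp(A)$ together with Roller/Sageev duality is exactly what one would want written down. One small point worth making explicit, beyond the halfspace-subcomplex remark you flag at the end: the cube complex dual to $\Hyp(A)$ is implicitly built from those walls viewed as partitions of $X^{(0)}$, whereas your second step identifies $\Hull(A)$ with the dual of its \emph{own} hyperplane wallspace over $\Hull(A)^{(0)}$. These two dual complexes agree because the consistency relations among halfspaces of $\Hyp(A)$ are unchanged upon restricting to $\Hull(A)$: if two such halfspaces meet in $X$, then they, together with the convex subcomplex $\Hull(A)$, pairwise intersect (each meets $\Hull(A)$ since its bounding hyperplane lies in $\Hyp(\Hull(A))$), and the Helly property for convex subcomplexes gives a common point inside $\Hull(A)$. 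With that line added your argument is complete.
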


We say that two subcomplexes $A,B \subset X$ are \emph{parallel} when there exists $p \geq 0$ such that $A \times [0,p]$ embeds isometrically in $X$ such that $A \times \braces{0} = A$ and $A \times \braces{p} = B$.  
In light of \Cref{obs:cHull}, hyperplanes can be used to detect when two convex subcomplexes are parallel.

\begin{lem}[{\cite[Lemma~2.8]{Huang:QIraags}, \cite[Lemma~2.7]{HJP:cube2DArtin}}]
	Two convex subcomplexes are parallel when they are dual to the same hyperplanes.  Moreover, they are equal if and only if there are no hyperplanes separating them.
\end{lem}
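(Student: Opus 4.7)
The plan is to identify $\Hull(A \cup B)$ as a combinatorial product via the hyperplane dualities of \Cref{obs:cHull}. Write $\mathcal{H}_0 := \Hyp(A) = \Hyp(B)$ and let $\mathcal{H}_s$ denote the hyperplanes of $X$ that separate $A$ from $B$. First I would observe that $\mathcal{H}_0 \cap \mathcal{H}_s = \emptyset$: any $\hat k \in \mathcal{H}_0$ has points of $A$ on both of its sides, so $A$ cannot lie in a single half-space of $\hat k$. It follows that $\Hyp(A \cup B) = \mathcal{H}_0 \sqcup \mathcal{H}_s$.

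The central step will be to show that every $\hat h \in \mathcal{H}_s$ crosses every $\hat k \in \mathcal{H}_0$. Since $\hat k$ meets $A$, which is entirely contained in one half-space of $\hat h$, there are points on both sides of $\hat k$ that lie on the $A$-side of $\hat h$; the analogous statement applied to $B$ populates the remaining two quadrants on the $B$-side of $\hat h$. All four quadrants determined by $\hat h$ and $\hat k$ are therefore nonempty, which forces these two hyperplanes to cross in a CAT(0) cube complex.

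Next, by \Cref{obs:cHull}, $\Hull(A \cup B)$ is the cube complex dual to $\mathcal{H}_0 \sqcup \mathcal{H}_s$. Because every hyperplane in $\mathcal{H}_0$ crosses every hyperplane in $\mathcal{H}_s$, Sageev's dual construction produces a product decomposition $\Hull(A \cup B) \cong C_0 \times C_s$, where $C_0$ is dual to $\mathcal{H}_0$ and $C_s$ is dual to $\mathcal{H}_s$. Applying \Cref{obs:cHull} again identifies $C_0$ with both $A$ and $B$, and since $A$ and $B$ lie on opposite sides of every hyperplane in $\mathcal{H}_s$, they correspond to the slices $C_0 \times \{y_A\}$ and $C_0 \times \{y_B\}$ for vertices $y_A, y_B$ of $C_s$ separated by all $|\mathcal{H}_s|$ hyperplanes, hence at combinatorial distance $p = |\mathcal{H}_s|$. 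Any combinatorial geodesic from $y_A$ to $y_B$ in $C_s$ furnishes an isometric embedding $[0,p] \hookrightarrow C_s$, and combined with the product structure this yields the desired isometric embedding $A \times [0,p] \hookrightarrow X$ with the correct endpoint matching, realising the parallelism. The moreover clause follows immediately: $A = B$ obviously forces $\mathcal{H}_s = \emptyset$, while conversely $\mathcal{H}_s = \emptyset$ collapses $C_s$ to a point, giving $y_A = y_B$ and $A = B$.

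The main obstacle will be invoking the product decomposition of the dual cube complex cleanly from the bipartite crossing pattern. I would either cite the standard fact that a CAT(0) cube complex whose hyperplane set splits into two subfamilies with every pair from different subfamilies crossing factors as a product of the dual cube complexes of the two subfamilies, or briefly justify it by noting that the consistent orientation systems on $\mathcal{H}_0 \sqcup \mathcal{H}_s$ decompose as an independent product precisely because no nesting can occur between hyperplanes of the two families.
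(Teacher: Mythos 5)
The paper does not prove this lemma---it is stated as a citation to Huang and to Haettel--Jankiewicz--Przytycki---so there is no in-text proof to compare against. Your argument is the standard one found in those sources: the four-quadrant observation shows that every hyperplane separating $A$ from $B$ crosses every hyperplane of $\Hyp(A)=\Hyp(B)=\mathcal{H}_0$, whence $\Hull(A\cup B)$ splits as $C_0\times C_s$; the slices $C_0\times\{y_A\}$ and $C_0\times\{y_B\}$ are identified with $A$ and $B$ because each lies on a single side of every hyperplane in $\mathcal{H}_s$ and each equals its own hull inside the corresponding slice. That is all correct.

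The one step worth tightening is your final sentence: a combinatorial geodesic from $y_A$ to $y_B$ is an isometric embedding of $[0,p]$ into $C_s$ in the $\ell^1$ (combinatorial) metric, but generally not in the CAT(0) metric, since it cuts corners across cubes. Since the paper's definition of parallel allows arbitrary real $p\geq 0$, the CAT(0) metric is the one in force; replace the combinatorial geodesic by the CAT(0) geodesic $\gamma$ from $y_A$ to $y_B$ and set $p = d_{\mathrm{CAT}(0)}(y_A,y_B)$. Then $\mathrm{id}_A\times\gamma$ is an isometric embedding of $A\times[0,p]$ into $A\times C_s = \Hull(A\cup B)\subset X$ with the required endpoint identifications. (Your argument as written does correctly prove the $\ell^1$ version.) The \emph{moreover} clause, read as you do within the hypothesis $\Hyp(A)=\Hyp(B)$, is fine.
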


%==========================================
\subsection{Isometries of CAT(0) cube complexes and their associated subcomplexes}
%==========================================
Let $X$ be a finite dimensional CAT(0) cube complex.
Given a hyperplane $\h$ of $X$ and $g$ a hyperbolic isometry of $X$, we say $g$ \emph{skewers} $\h$ if for some choice of halfspace $\hs$ associated to $\h$ we have, $g^p \hs \subset \hs$ for some $p \geq 1$.  Such $p$ can be taken to be at most $\dim(X)$.  Equivalently, $g$ skewers $\h$ if any CAT(0) axis of $g$ intersects $\h$ in exactly one point.  We say $g$ is \emph{parallel} to $\h$ if any axis for $g$ is contained in the  $R$-neighborhood of $\h$ for some $R \geq 0$. 
A hyperbolic isometry $g$ is \emph{peripheral} to $\h$ if it neither skewers it nor is parallel to it.

Given a hyperbolic isometry $g$ of $X$, the \emph{skewer set} of $g$, denoted $\sk(g)$, is the collection of all hyperplanes skewered by $g$. 
The \emph{parallel subcomplex} $Y_g$ of a hyperbolic isometry $g$ is the maximal subcomplex of $X$ contained in the intersection of peripheral halfspaces containing the axes of $g$ (see~\cite{KarSageev}).  
Let $\mathrm{p}(g)$ be the collection of hyperplanes parallel to $g$.
The parallel subcomplex $Y_g$ is a $\abrackets{g}$-invariant subcomplex of $X$ dual to 
$$ \Hyp(Y_g) = \sk(g) \cup \mathrm{p}(g). $$
This subcomplex naturally decomposes as a product $Y_g = E_g\times K_g$ where $E_g$ is dual to $\sk(g)$ and $K_g$ is dual to $\mathrm{p}(g)$ because any hyperplane in $\mathrm{p}(g)$ crosses every hyperplane of $\sk(g)$.
The complex $E_g$ is a \emph{Euclidean subcomplex}, that is, 
when equipped with the combinatorial metric,
it isometrically embeds in Euclidean space \cite[Lemma~2.4]{Jankiewicz}. 
The action of $g$ on $Y_g$ respects the decomposition where $g$ acts as a translation on $E_g$, and has a fixed point in its action on $K_g$.
Moreover, for every axis $\ell_g$ of $g$, the subcomplex $Y_g$ contains $\Hull(\ell_g)$, which is isometric to $E_g$.

%==========================================
\subsection{Rank one isometries}
%==========================================
We record some observations about rank one isometries of CAT(0) cube complexes. 
Let $X$ be a finite dimensional locally finite CAT(0) cube complex.  
An isometry $g$ of $X$ is called \emph{rank one} if it is hyperbolic and an axis of $g$ does not bound a half flat. A bi-infinite geodesic in $X$ is called a \emph{rank one geodesic} if it does not bound a half flat.  
The \emph{visual boundary} $\boundary X$ of a geodesic metric space $X$ consists of equivalence classes of geodesic rays emanating from a base point where two rays are said to be equivalent if they have finite Hausdorff distance.  
The homeomorphism type of the boundary does not depend on the choice of base point.

\begin{lem}\label{lem:Limit set of rank one isometry}\cite[Lemma III.3.3]{BallmannLectures}
 Let $g$ be a rank one isometry of a locally compact CAT(0) space $Y$. Then $g$ fixes exactly two points in the visual boundary $\boundary Y$, denoted $\Lambda(g)$.  
\end{lem}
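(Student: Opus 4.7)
Since $g$ is rank one, it is in particular a hyperbolic isometry, so there exists an axis $\ell$ in $Y$, that is, a bi-infinite geodesic on which $g$ acts by translation with translation length $\tau > 0$. The two endpoints $\xi^+, \xi^- \in \boundary Y$ of $\ell$ are therefore fixed by $g$, yielding existence of (at least) two fixed points in $\boundary Y$.

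For uniqueness, the plan is to suppose toward a contradiction that $\eta \in \boundary Y$ is a fixed point of $g$ with $\eta \notin \{\xi^+, \xi^-\}$, and to build a geodesic line between $\xi^+$ and $\eta$ that serves as a second family of axes of $g$, contradicting the product structure of the minimal displacement set. The key input from the rank one hypothesis, together with local compactness of $Y$, is the standard fact (also in Ballmann's lectures) that each endpoint of a rank one geodesic has infinite Tits distance from every other point of $\boundary Y$; this is a visibility-type statement that produces a bi-infinite geodesic line $\gamma$ with $\gamma(-\infty) = \xi^+$ and $\gamma(+\infty) = \eta$.

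The next step is to promote $\gamma$ to an axis of $g$. Since $g$ fixes both endpoints of $\gamma$, its image $g(\gamma)$ is a bi-infinite geodesic with the same pair of points at infinity, so by the flat strip theorem $g(\gamma)$ and $\gamma$ bound a flat Euclidean strip. The restriction of $g$ to this strip is a Euclidean isometry fixing both endpoints of each boundary line, so it must be a translation parallel to those lines. Hence $g(\gamma) = \gamma$ and $g$ acts on $\gamma$ by translation, making $\gamma$ an axis of $g$. Since all axes of a hyperbolic isometry lie in the convex minimal displacement set $\mathrm{Min}(g)$, which splits as a product $C \times \mathbb{R}$ whose $\mathbb{R}$-fibers are precisely the axes, the axes $\ell$ and $\gamma$ are parallel, so they share the same pair of endpoints in $\boundary Y$. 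Comparing $\{\xi^+, \xi^-\}$ with $\{\xi^+, \eta\}$ forces $\eta = \xi^-$, the desired contradiction.

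The main obstacle I anticipate is rigorously producing the geodesic line $\gamma$ from $\xi^+$ to $\eta$; this is the one step where the rank one hypothesis is genuinely used, and requires Ballmann's analysis of the Tits boundary at endpoints of rank one geodesics in locally compact CAT(0) spaces. The remaining arguments are routine applications of the flat strip theorem and the product decomposition of $\mathrm{Min}(g)$.
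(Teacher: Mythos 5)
The paper gives no proof of this lemma; it is simply cited from Ballmann's lectures, so there is no internal argument to compare against. Evaluating your attempt on its own terms, the existence half and the overall strategy (produce a geodesic $\gamma$ from $\xi^+$ to the alleged third fixed point $\eta$ via visibility of rank one endpoints, then contradict the product structure of $\mathrm{Min}(g)$) are sound, and the visibility input you flag is indeed the right use of the rank one hypothesis.

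There is, however, a genuine gap in the middle step. You write that $\gamma$ and $g(\gamma)$ bound a flat strip and that ``the restriction of $g$ to this strip is a Euclidean isometry,'' concluding $g(\gamma) = \gamma$. But $g$ does not restrict to that strip: $g$ sends the strip bounded by $\gamma$ and $g\gamma$ to the strip bounded by $g\gamma$ and $g^2\gamma$, and these need not coincide. So the deduction that $g$ acts as a translation of a fixed Euclidean strip, and hence that $g\gamma = \gamma$, does not go through. To repair this along the same lines you should pass from a single strip to the full parallel set $P(\gamma)$ of all geodesics with endpoints $\xi^+$ and $\eta$; this set \emph{is} $g$-invariant, is closed and convex, and splits as $Y_0 \times \mathbb{R}$ with $g$ preserving the product. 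You then still need an argument that either $g$ has an axis inside $P(\gamma)$ (e.g.\ by showing $Y_0$ is bounded, so $g$ fixes a point of $Y_0$) or, more directly, that $\partial\mathrm{Min}(g) = \{\xi^+,\xi^-\}$ because the transverse factor $C$ in $\mathrm{Min}(g) = C \times \mathbb{R}$ is bounded by rank one, and then locate $\eta$ in $\partial\mathrm{Min}(g)$. Alternatively, Ballmann's own route is more dynamical, using that $g^n x \to \xi^+$ and $g^{-n}x \to \xi^-$ for every $x$, which immediately rules out a third fixed boundary point; either way the flat-strip step as you wrote it needs to be replaced.
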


We now show that $g$ acts cocompactly on its parallel subcomplex $Y_g$ when $g$ is a rank one isometry.

	\begin{prop}\label{prop:Yg maximal}
		Let $g$ be a hyperbolic isometry of $X$, and let $\ell_g$ be a CAT(0) axis of $g$. Then any bi-infinite CAT(0) geodesic within finite Hausdorff distance from $\ell_g$ is contained in the parallel subcomplex $Y_g$ of $g$.
		
		Moreover, if $g$ is rank one, then the parallel subcomplex $Y_g$ is the maximal convex subspace of $X$ with $\boundary Y_g = \Lambda(g)$.  Furthermore, $Y_g$ is a quasi-line on which $g$ acts cocompactly.
	\end{prop}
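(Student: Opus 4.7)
The first statement can be proved one hyperplane at a time. Let $\ell$ be a bi-infinite CAT(0) geodesic with Hausdorff distance at most $R$ from $\ell_g$, and let $\h$ be a hyperplane peripheral to $g$. Then $\ell_g$ is contained in one halfspace $\hs$ associated to $\h$. If $\ell$ were to cross $\h$, then one ray of $\ell$ would eventually be contained in the opposite halfspace and travel to infinity there, while $\ell_g$ stays inside $\hs$, contradicting the Hausdorff bound. Hence $\ell \subseteq \hs$ for every peripheral $\h$, and therefore $\ell$ lies in the intersection of all peripheral halfspaces containing the axes of $g$, which coincides with $Y_g$ by its definition.

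For the quasi-line and cocompactness claims, I would use the product decomposition $Y_g = E_g \times K_g$ recorded earlier. Rank one of $g$ forbids the Euclidean subcomplex $E_g$ from containing a half-plane bounded by $\ell_g$, so the transverse extent of $E_g$ is bounded and $E_g$ is quasi-isometric to $\R$. Similarly, $K_g$ cannot contain a bi-infinite geodesic: otherwise, paired with $\ell_g \subseteq E_g$, it would span a half-flat bounded by an axis of $g$. Hence $K_g$ is bounded, and $Y_g = E_g \times K_g$ is a quasi-line. By \Cref{lem:Limit set of rank one isometry}, its visual boundary $\braces{g^+, g^-}$ equals $\Lambda(g)$. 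Since $g$ translates along $\ell_g$ and preserves the product structure while $K_g$ is compact, $\la g \ra$ acts cocompactly on $Y_g$.

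For maximality, let $Z$ be a convex subspace of $X$ with $\boundary Z = \Lambda(g) = \braces{g^+, g^-}$; the goal is to show $Z \subseteq \hs$ for every peripheral halfspace $\hs$, which then gives $Z \subseteq Y_g$. The subset $B := Z \cap \overline{\hs^c}$ is convex, and its boundary at infinity lies in $\boundary Z \cap \boundary \overline{\hs^c}$. Since $\h$ is peripheral to $g$, neither $g^+$ nor $g^-$ lies in $\boundary \h$, and since the rays of $\ell_g$ to $g^\pm$ stay inside $\hs$, neither of these points lies in $\boundary \overline{\hs^c}$. Hence $\boundary B = \emptyset$ and $B$ is bounded. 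The step I expect to be most delicate is promoting ``$B$ is bounded'' to ``$B = \emptyset$'': this requires leveraging rank one of $g$ together with $g$-equivariance (iterating by $g^n$ spreads any bounded bulge along $\ell_g$), and using that $Y_g$ is the maximal subcomplex avoiding peripheral hyperplanes to rule out transverse excursions of $Z$.
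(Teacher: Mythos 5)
Your hyperplane-by-hyperplane approach for the first claim is genuinely different from the paper's. The paper observes that $\ell$ and $\ell_g$, being asymptotic and at bounded Hausdorff distance, bound a flat strip by the Flat Strip Theorem, and then checks that no hyperplane crossing such a strip can be peripheral to $g$. Your argument, working directly with halfspaces, is closer in spirit to the dual viewpoint of the parallel subcomplex, but as written it has a gap: the fact that a ray of $\ell$ ``travels to infinity'' in $\hs^c$ while $\ell_g$ lies in $\hs$ does \emph{not} by itself contradict bounded Hausdorff distance --- two geodesics on opposite sides of a hyperplane can easily remain within bounded distance of each other by staying near $\h$. To make your argument work you would need the stronger fact that \emph{both} rays of $\ell_g$ diverge from a peripheral hyperplane. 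This is true, but it is not the definition of ``peripheral'' (which only says $\ell_g$ is not contained in any bounded neighborhood of $\h$); it requires an argument using convexity of $d(\cdot,\h)$ along $\ell_g$ together with $g$-equivariance and local finiteness to rule out one ray staying near $\h$. Once you have this, the ray of $\ell$ in $\hs^c$ forces a ray of $\ell_g$ to stay within bounded distance of $\h$, giving the contradiction. The Flat Strip route avoids this bookkeeping entirely.

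For maximality, the gap you flag is real, and your proposed fix does not work: you cannot ``iterate by $g^n$ to spread the bulge'' because the subspace $Z$ is not assumed $g$-invariant, so $g^n Z$ is a different set and you learn nothing new. Also, the step ``$\xi_\pm \notin \partial\h$ since $\h$ is peripheral'' rests on the same unproved strengthening of ``peripheral'' noted above. It is worth knowing that the paper itself only proves the weaker statement that every bi-infinite geodesic joining $\Lambda(g)$ lies in $Y_g$ (by the first part of the proposition), which is exactly what is used later in \Cref{lem:Ya equal to Yb}; the paper does not give an argument that covers an arbitrary convex subspace $Z$ with $\boundary Z = \Lambda(g)$. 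Indeed, the literal ``maximal'' claim is delicate: a bounded enlargement such as $\Hull(Y_g\cup\{z\})$ for $z\notin Y_g$ is a strictly larger convex subcomplex with the same visual boundary. So rather than trying to close the gap as posed, you should prove only what is needed: that any $g$-invariant parallel subcomplex or any geodesic with endpoints $\Lambda(g)$ lies in $Y_g$, and, for \Cref{lem:Ya equal to Yb}, observe directly that if $\Lambda(a)=\Lambda(b)$ then $a$ and $b$ have the same set of peripheral hyperplanes, so $Y_a=Y_b$. Your treatment of the quasi-line and cocompactness is essentially the paper's argument via the product $E_g\times K_g$, with $K_g$ bounded when $g$ is rank one, and is fine.
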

	\begin{proof} 
		Let $\ell$ be a bi-infinite geodesic such that $d_{Haus}(\ell, \ell_g) <\infty$. 
	%%%% SELF NOTE: 
	%%%%Let $H$ be the Hausdorff distance between $\ell$ and $\ell_g$. We need to show $\ell$ and $\ell_g$ are asymptotic in order to apply Flat Strip theorem. Choose starting points $\ell(0)$ and $\ell_g(0)$ and let $d(\ell(0), \ell_g(0)) = d_0$. We need to show that there exists a constant $K$ such that $d(\ell(t), \ell_g(t)) <K$ for all time $t$. For $\ell_g(t)$, let $\ell(t')$ be the point on $\ell$ which is distance at most $H$ from it. Then $|\ell_g(t) - \ell(t)| \leq H + |t-t'|$. Now we also have $t' \leq d_0+t+H$ and $t \leq d_0 + t'+H$. Therefore, $|t-t'| \leq d_0+H$. Therefore, $|\ell_g(t) - \ell(t)| \leq 2H + d_0 =:K$.  
		It is straightforward to see that the two geodesics are asymptotic. 
		By the Flat strip theorem \cite[Theorem~II.2.13]{BridsonHaefliger}, we get that $\ell$ and $\ell_g$ are in fact parallel. Therefore $\ell \subset Y_g$. 
		
		Suppose $g$ is rank one. The limit set $\Lambda(g)$ has two points by \Cref{lem:Limit set of rank one isometry}. 
		Any bi-infinite geodesic joining $\Lambda(g)$ has finite Hausdorff distance from $\ell_g$, 
		and by the above is contained in $Y_g$. 
		Furthermore, by \cite[Lemma~4.8]{Hagen2020} $\Hull(\ell_g)$ is contained in a bounded neighborhood of the axis $\ell_g$, so the action of $g$ on $\Hull(\ell_g)$ is cocompact.
		The parallel subcomplex $Y_g$ decomposes as a product of cube complexes $E_g\times K_g$,
		where $E_g$ is isometric to $\Hull(\ell_g)$
		and $K_g$ is bounded since $g$ is rank one. 
		Thus $\boundary Y_g = \boundary E_g = \Lambda(g)$. 
		The action of $g$ preserves the product decomposition and acts as a translation on $E_g$.
		Hence, it follows that the action of $g$ on $Y_g$ is cocompact.
	\end{proof}

We record the following consequence that will be used in the proof of \Cref{thm:CAT(0)IFP}.

\begin{lem}\label{lem:Ya equal to Yb}
 Let $a$ and $b$ be rank one isometries of $X$ such that $\Lambda(a) = \Lambda(b)$. Then $Y_a = Y_b$. 
\end{lem}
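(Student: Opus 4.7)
The plan is to deduce the equality $Y_a = Y_b$ immediately from the maximality clause of \Cref{prop:Yg maximal}, which does essentially all of the work.

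First, since both $a$ and $b$ are rank one isometries, \Cref{prop:Yg maximal} identifies $Y_a$ and $Y_b$ as convex subspaces of $X$ whose visual boundaries are $\boundary Y_a = \Lambda(a)$ and $\boundary Y_b = \Lambda(b)$ respectively. The hypothesis that $\Lambda(a) = \Lambda(b)$ then forces $\boundary Y_a = \boundary Y_b$.

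Next, I would invoke the maximality assertion of \Cref{prop:Yg maximal}: since $b$ is rank one, $Y_b$ is \emph{the} (unique largest) convex subspace of $X$ whose visual boundary equals $\Lambda(b)$. Applied to the convex subspace $Y_a$, whose visual boundary equals $\Lambda(b)$ by the previous paragraph, this gives the inclusion $Y_a \subseteq Y_b$. Interchanging the roles of $a$ and $b$ and applying the same reasoning yields $Y_b \subseteq Y_a$, so $Y_a = Y_b$.

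There is essentially no obstacle here beyond quoting \Cref{prop:Yg maximal}; the principal content of this lemma lies in that proposition rather than in the present statement. The only point worth verifying is that "maximal" in \Cref{prop:Yg maximal} really means "contains every other convex subspace with the same visual boundary", not merely chain-maximal. This is signaled by the definite article in that proposition and is transparent from its proof, where $Y_g$ decomposes as $E_g \times K_g$ with $K_g$ bounded, so any convex subspace with boundary $\Lambda(g)$ consists of bi-infinite geodesics asymptotic to $\ell_g$, each of which (by the first half of the proposition together with the Flat strip theorem) lies in $Y_g$.
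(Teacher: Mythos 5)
Your proof is correct and takes essentially the same approach as the paper: both deduce $Y_a = Y_b$ from the maximality clause of \Cref{prop:Yg maximal} applied symmetrically. The paper's version first uses the first half of \Cref{prop:Yg maximal} to note $\ell_b \subset Y_a$ and $\ell_a \subset Y_b$ before invoking maximality, a step that is strictly redundant once one quotes maximality directly as you do; your closing paragraph sanity-checking what "maximal" means is a reasonable precaution but not a departure in method.
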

\begin{proof}
Since $\Lambda(a) = \Lambda(b)$, an axis of $a$ and an axis of $b$ are finite Hausdorff distance apart. Then by \Cref{prop:Yg maximal}, $\ell_b \subset Y_a$ and $\ell_a \subset Y_b$.  
By Proposition~\ref{prop:Yg maximal}, $Y_a$ is the maximal convex subspace of $X$ with $\boundary Y_a = \Lambda(a)$. Therefore $Y_b \subseteq Y_a$ and vice versa. Thus they are equal.  
\end{proof}

%==========================================
\subsection{Types of group actions and Bieberbach's Theorem}
%==========================================

We recall the definitions of free, proper, and discrete actions on possibly locally infinite CW complexes. 
Let $X$ be a CW complex and let $G$ be a discrete group acting cellularly by isometries on $X$. 
The action of $G$ is \emph{proper} when for every compact set $K$ in  $X$, the collection $\{g \in G | gK \cap K \neq \emptyset\}$ is finite.  Since $X$ is a CW complex, this is equivalent to requiring that every cell stabilizer is finite (see for instance, \cite{KapovichMisha}).

The action of $G$ on $X$ is \emph{free} if every point in $X$ has trivial stabilizer. 
%$G$ acts \emph{faithfully} on $X$ if every element of $G$ acts as a non-trivial homeomorphism of $X$. 
We say the action is \emph{discrete} if $G$ is a discrete subgroup of homeomorphisms of $X$ with respect to the compact open topology. 
For CW complexes, proper actions are always discrete. 
We recall the following version of Bieberbach's theorem for later reference. 

\begin{thm}[Bieberbach's Theorem]\cite[Corollary 4.1.13]{ThurstonBook}\label{thm:Bieberbach}
	For each dimension $n$, there is an integer $m$ such that any group acting discretely by isometries on Euclidean $n$-space $\mathbb{E}^n$ has an abelian subgroup of index at most $m$. 
\end{thm}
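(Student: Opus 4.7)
The plan is to analyze the short exact sequence
\begin{equation*}
1 \to T \to \mathrm{Isom}(\mathbb{E}^n) \xrightarrow{\rho} O(n) \to 1,
\end{equation*}
where $T \cong \mathbb{R}^n$ is the translation subgroup and $\rho$ sends an isometry $x \mapsto Ax + b$ to its linear part $A$. Let $T_G := G \cap T$ be the subgroup of pure translations in $G$; since translations commute, $T_G$ is abelian, and since it is the kernel of $\rho|_G$, it is normal in $G$. Discreteness of $G$ forces $T_G$ to be a discrete subgroup of $\mathbb{R}^n$, hence a lattice $\mathbb{Z}^k$ in some subspace $V \subseteq \mathbb{R}^n$.

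Next I would use the conjugation action of $G$ on $T_G$. A direct computation shows this action factors through $\rho$: conjugating a translation by $v$ by the isometry $(A,b)$ yields translation by $Av$. So we obtain a homomorphism $\varphi\colon G \to \mathrm{Aut}(T_G)$ whose image consists of orthogonal transformations preserving the lattice $T_G$, and hence lies in the finite group $O(V) \cap GL_k(\mathbb{Z})$, of order bounded by $2^k k!$. Its kernel $H := \ker \varphi$ is a normal finite-index subgroup of $G$ whose elements have rotational parts fixing $V$ pointwise, so $\rho(H) \subseteq O(V^\perp)$.

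The heart of the argument is Bieberbach's commutator trick applied to $H$. For any two elements $g_i = (A_i, b_i) \in H$, the matrix identity $\|[A_1, A_2] - I\| \leq 2 \|A_1 - I\| \cdot \|A_2 - I\|$ shows that the rotational part of commutators shrinks quadratically when $A_1, A_2$ are close to the identity. Modifying $g_1, g_2$ by elements of $T_G$ so that their translational parts lie in a fixed fundamental domain for $T_G$, iterated commutators would otherwise produce a sequence of non-identity elements of $G$ accumulating at the identity, contradicting discreteness. From this one extracts an abelian subgroup of $H$ of index bounded in terms of $n$, using Jordan's theorem to handle the residual finite rotational data on $V^\perp$. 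Composing indices gives a subgroup of $G$ that is abelian and of index at most some constant $m(n)$ depending only on $n$.

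The main obstacle is controlling the translational parts during the commutator iteration: the rotational parts shrink quadratically with each commutator, but a priori translational parts could grow. The resolution is to repeatedly recenter group elements by lattice translations in $T_G$ and, on $V^\perp$, to exploit that the subspace fixed by a rotation near the identity is large, so that any unbounded translational part of such a rotational element must be nearly parallel to that fixed subspace; this lets one balance the shrinking rotations against bounded translations to contradict discreteness unless the rotational parts form a finite set.
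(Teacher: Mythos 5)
The paper itself does not prove this statement; it is cited directly from \cite[Corollary~4.1.13]{ThurstonBook}, so there is no internal argument to compare yours against. Your strategy is the classical one for Bieberbach's theorem, and the pieces you lay out are individually sound: the conjugation action of $G$ on $T_G$ factoring through the rotational part, the discreteness of $T_G$ as a lattice in a subspace $V$, and the operator-norm estimate $\|[A_1,A_2]-I\|\le 2\|A_1-I\|\,\|A_2-I\|$ for orthogonal matrices are all correct. Minor quibble: the bound $2^k k!$ on the orthogonal automorphism group of a rank-$k$ lattice is wrong in general (the $E_8$ lattice already violates it); what you actually need is Minkowski's theorem, which gives finiteness with a bound depending only on $k$, and that is enough.

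The genuine gap is exactly where you flag ``the main obstacle.'' When $T_G$ has rank $k<n$, recentering by $T_G$ controls only the $V$-component of a translational part; the $V^\perp$-component stays a priori unbounded, and the commutator estimate for translations needs a bound on $\|b_i\|$ to close. Your proposed fix rests on the assertion that the fixed subspace of a rotation close to the identity is large. That is false: a simultaneous rotation by a small nonzero angle in each of $\lfloor \dim V^\perp/2\rfloor$ mutually orthogonal $2$-planes lies arbitrarily close to $I$ in operator norm yet fixes only the origin (or at most one extra line when $\dim V^\perp$ is odd). Hence a large translational part need not be nearly parallel to the fixed subspace, and the ``balance shrinking rotations against bounded translations'' step does not get off the ground. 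Showing that $\rho(H)\subset O(V^\perp)$ is finite in the non-cocompact case is precisely the hard content of this version of the theorem; the standard ways through are a Zassenhaus/Margulis-type lemma in the full Lie group $\mathrm{Isom}(\mathbb{E}^n)$, which constrains both the rotational and translational parts simultaneously, or an argument using translation lengths $\tau(g)=\inf_x d(x,gx)$ together with the fact that a discrete nilpotent subgroup of $\mathrm{Isom}(\mathbb{E}^n)$ is abelian. As written, your sketch does not reach the conclusion.
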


%==========================================
\subsection{Flats in $\CAT(0)$ spaces} \label{subsec:isolated flats}
%==========================================

 \begin{defn} \label{def:flat}
 	Let $X$ be a $\CAT(0)$ space. For $k \geq 2$, a \emph{($k$-)flat} in $X$ is an isometrically embedded copy of Euclidean space $\mathbb{E}^k$.
 	A \emph{half-flat} in $X$ is an isometrically embedded copy of $\R \times \R_+$.
 \end{defn}

 The reader is referred to \cite{Hruska:GeometricInvariants} for details on CAT(0) spaces with isolated flats. We recall the definition here.  

  \begin{defn} \label{defn:CAT(0)IFP}
   A $\CAT(0)$ space $X$ has {\it isolated flats} if there is a non-empty $\Isom(X)$-invariant collection of flats $\F$, of dimension at least two, such that the following conditions hold:
   \begin{enumerate}
    \item {(Maximal)} There exists a constant $D < \infty$ such that each flat in $X$ lies in the $D$-tubular neighborhood of some $F \in \F$.
    \item {(Isolated)} For every $\rho < \infty$ there exists $\kappa(\rho)<\infty$ such that for any two distinct flats $F,F' \in \F$, $\diam(N_{\rho}(F) \cap N_{\rho}(F')) <\kappa(\rho)$. 
   \end{enumerate}
  \end{defn}

  We say a CAT(0) cube complex $X$ has isolated flats if $X$ with its CAT(0) metric is a CAT(0) space with isolated flats. 
  CAT(0) cube complexes are particularly well-adapted to studying isolated flats because hyperplanes inherit the isolated flats property.  
  
  \begin{lem}\label{lem:hyperplane} 
  	Let $X$ be a CAT(0) cube complex with isolated flats. Let $\h$ be a hyperplane of $X$. Then either $\h$ does not have any flats or it is also a CAT(0) cube complex with isolated flats. 
  \end{lem}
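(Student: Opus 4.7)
The plan is to show that when $\h$ contains a flat, it inherits the isolated flats property from $X$. Observe that $\h$ is itself a CAT(0) cube complex (of one lower dimension) embedded in $X$ as a convex subspace, so the CAT(0) metric on $\h$ coincides with the restriction of the $X$-metric, and any flat in $\h$ is automatically a flat in $X$.

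For each $F \in \F$ that either lies in $\h$ or crosses $\h$ transversely in a flat of dimension at least $2$, I would include the corresponding flat (respectively $F$ or $F \cap \h$) in a collection $\F_\h$, which is then invariant under the $\Isom(X)$-stabilizer of $\h$. The isolated condition transfers essentially for free: distinct elements $F_1^\h, F_2^\h \in \F_\h$ arise from distinct $F_1, F_2 \in \F$, and because $\h$ is convex in $X$, $N_\rho^\h(F_i^\h) \subseteq N_\rho^X(F_i)$, so the diameter bound $\kappa(\rho)$ for $\F$ in $X$ immediately yields the analogous bound inside $\h$.

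The substance of the proof is the maximal condition. Given any flat $F' \subseteq \h$, maximality of $\F$ in $X$ supplies some $F \in \F$ with $F' \subseteq N_D^X(F)$. If $F \subseteq \h$, then $F' \subseteq N_D^\h(F)$ and we are done. If $F$ crosses $\h$ transversely, then $F \cap \h$ is a flat of dimension $\dim F - 1$, and a transversality estimate shows that $N_D^X(F) \cap \h$ is contained in a bounded $\h$-neighborhood of $F \cap \h$ whose width depends on the angle between $F$ and $\h$; since $F' \subseteq N_D^X(F) \cap \h$ is a flat of dimension at least $2$, this forces $\dim(F \cap \h) \geq \dim F' \geq 2$, placing $F \cap \h$ in $\F_\h$ and bounding $F'$ within a uniform $\h$-neighborhood of it.

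The delicate remaining case, where I expect the main obstacle, is when $F$ is parallel to $\h$ but not contained in it, so that $F$ and its nearest-point projection to $\h$ are two distinct parallel $k$-flats at Hausdorff distance at most $D$. Here the plan is to use the product structure of the region between them to build a $(k+1)$-dimensional flat strip in $X$ that straddles $\h$, and then invoke maximality of $\F$ to produce some $F'' \in \F$ of dimension at least $k+1$ close to this strip. An isolated-flats argument in $X$ rules out $F''$ being parallel to the strip, since that would furnish two distinct flats of $\F$ at bounded Hausdorff distance, so $F''$ must cross $\h$ in a flat of dimension $\geq k \geq 2$, reducing this case to the transverse case already handled. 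The subtle point is that assembling the $(k+1)$-flat requires a careful cubical argument---one natural approach is to combine the hyperplanes skewered by $F$ with those meeting $\h$ along the projection and extract the strip using Sageev's dual cube complex construction.
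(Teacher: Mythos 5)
Your overall strategy --- pushing the collection $\F$ into $\h$ by intersecting --- matches the paper's, and your treatment of condition (Isolated) is essentially the paper's: convexity of $\h$ lets you restrict the ambient diameter bound. The gap is in (Maximal). You work from the literal $D$-neighborhood form of the axiom and are consequently forced into your cases (b) and (c), neither of which is made rigorous. In case (b), the ``transversality estimate'' does not give a uniform constant: the diameter of $N_D(F) \cap \h$ measured transverse to $F \cap \h$ depends on the angle at which $F$ meets $\h$, and nothing in the hypotheses bounds that angle away from zero across all hyperplanes and all maximal flats. Case (c) you yourself flag as unfinished, and the proposed flat-strip-plus-dual-cube-complex construction is not carried out, nor is it clear it can be made uniform.

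The idea you are missing is that the $D$-neighborhood condition can be upgraded to genuine containment: in a proper CAT(0) space every flat lies inside a maximal flat, and $\F$ may be taken to be the collection of all maximal flats. The paper uses this directly --- it takes the flat $F \subseteq \h$, finds $F' \in \F$ with $F \subseteq F'$, notes that $\h \cap F'$ is convex (an intersection of two convex sets in a CAT(0) space) and hence a Euclidean flat of dimension at least $\dim F \geq 2$, and sets $\F_\h$ to be the nonempty intersections $\h \cap F'$ of dimension at least two. Then (Maximal) holds trivially (with constant zero), (Isolated) transfers exactly as you describe, and your troublesome oblique and parallel cases never arise.
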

  \begin{proof} 
  	Let $F$ be a flat contained in $\h$. Every flat in $\h$ is a flat in $X$ and hence there exists a maximal flat $F'$ in $\F$ containing $F$. Since intersection of two convex sets is convex in a CAT(0) space, $\h \cap F'$ is convex. Thus $\h \cap \F$ is the non-empty collection of maximal flats in $\h$ that satisfy Definition~\ref{defn:CAT(0)IFP}.
  \end{proof}
  
  \begin{lem}\label{lem:half flat close to flat}
  	Any half flat in a CAT(0) space with isolated flats that admits a geometric group action is contained in a bounded Hausdorff neighborhood of a maximal flat in $\F$. 
  \end{lem}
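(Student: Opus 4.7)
The plan is to produce a full flat in $X$ as a limit of large Euclidean disks inside the half-flat $H$, localize that flat near some $F \in \F$ via the maximal condition, and then use the isolated condition to force all of $H$ into a bounded Hausdorff neighborhood of a single maximal flat.

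Write $H = \iota(\R \times [0, \infty))$ for the defining isometric embedding. For any point $q = \iota(x_0, y_0) \in H$, the Euclidean disk of radius $y_0$ around $q$ sits inside $\R \times [0, \infty)$, so $B_H(q, y_0)$ embeds isometrically in $X$ as a genuine Euclidean $y_0$-disk. Using cocompactness of the geometric action, pick $g_q \in G$ translating $q$ into a fixed compact set $K \subset X$. As $y_0 \to \infty$ these recentered disks form isometric embeddings of Euclidean balls of growing radii based in $K$; since $X$ is proper, Arzel\`a--Ascoli combined with a diagonal argument extracts a subsequence converging uniformly on compact sets to an isometric embedding $\phi_\infty : \R^2 \to X$ whose image $F^*$ is a full flat. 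The maximal condition provides $F^* \subseteq N_D(F_0)$ for some $F_0 \in \F$, and unwinding the contradiction yields: for every $R > 0$ there exists a threshold $M(R)$ such that any $q \in H$ at height at least $M(R)$ admits some $F_q \in \F$ with $B_H(q, R) \subseteq N_{D+1}(F_q)$ (namely $F_q = g_q F_0$, which lies in $\F$ by its $\Isom(X)$-invariance).

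Fix $R > \kappa(D+1)$. If $q, q'$ both have height at least $M(R)$ and satisfy $d(q, q') < R$, the disks $B_H(q, R)$ and $B_H(q', R)$ overlap in a set of diameter at least $2R - d(q, q') > \kappa(D+1)$, so the isolated condition forces $F_q = F_{q'}$. Chaining this across the connected deep region $\{q \in H : \mathrm{height}(q) \geq M(R)\}$ produces a single $F \in \F$ such that every deep $q$ lies in $B_H(q, R) \subseteq N_{D+1}(F)$. Every remaining point of $H$ lies within distance $M(R)$ of a deep point, giving $H \subseteq N_{D+1+M(R)}(F)$ and the desired Hausdorff bound with $C = D + 1 + M(R)$.

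The main obstacle is the Arzel\`a--Ascoli diagonalization that simultaneously produces the full flat $F^*$ and the uniform threshold $M(R)$; once these are set up, the interplay between the maximal and isolated clauses of the definition is a standard isolated-flats argument. A minor subtlety is that the stabilization of $F_q$ to a single $F$ is established only on the deep portion of $H$, but the remaining boundary strip has uniformly bounded Euclidean width and is therefore absorbed automatically into a slightly larger tubular neighborhood of $F$.
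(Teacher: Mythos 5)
Your argument is correct, but it takes a genuinely different route from the paper, which dispatches the lemma in one line by citing Hruska--Kleiner: a CAT(0) space with isolated flats admitting a geometric action is hyperbolic relative to the maximal flats, and the claim then follows from the theory of relatively hyperbolic spaces. Your proof is self-contained and works directly from Definition~\ref{defn:CAT(0)IFP}. The outline is sound: extract a genuine flat $F^*$ as a pointed Gromov--Hausdorff limit of larger and larger disks deep in the half-flat (properness of $X$, which follows from the geometric action, is what makes Arzel\`a--Ascoli available); use condition~(Maximal) on $F^*$ and a compactness-by-contradiction argument to get, for each $R$, a height threshold $M(R)$ past which every $R$-ball of $H$ lies in an $N_{D+1}$-neighborhood of some flat of $\F$; then use condition~(Isolated) with $\rho = D+1$ and $R > \kappa(D+1)$ to force these nearby flats to coincide across the deep, $R$-chain-connected region of $H$, and absorb the shallow strip of Euclidean width $M(R)$ into the final tubular-neighborhood constant. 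Note two small slips: the recentered flat should be $g_q^{-1}F_0$ rather than $g_qF_0$, and the flat $F_0$ depends on the contradiction sequence, so you should phrase step~6 purely as an existence statement rather than give an explicit formula for $F_q$; also take $M(R) \geq R$ so the $R$-balls you intersect are honest Euclidean disks. Your approach trades a deep citation for a longer but elementary and transparent argument; the paper's approach is shorter on the page but imports the full machinery of relative hyperbolicity and implicitly uses that quasi-flats (or half-flats) in a relatively hyperbolic space lie near peripherals, a nontrivial step that your argument avoids.
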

  \begin{proof}
  	This follows because such spaces are relatively hyperbolic with respect to the collection of maximal flats by Hruska and Kleiner \cite{HruskaKleiner}.
  \end{proof}

  \begin{lem}\label{lem:b stabilizes end points of a}
  	Let $a$ be a rank one isometry and $b$ be a hyperbolic isometry of a CAT(0) space $X$ with isolated flats that admits a geometric group action. If $b$ fixes $\Lambda(a)$ then $b$ is also a rank one isometry and $\Lambda(a) = \Lambda(b)$. 
  \end{lem}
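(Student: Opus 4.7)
The plan is to analyze the parallel set of an axis of $a$ and locate an axis of $b$ inside it. Let $\ell_a$ be a CAT(0) axis of $a$ with endpoints $\Lambda(a) = \{\xi^-, \xi^+\}$, and let $P = P(\ell_a)$ denote the union of all bi-infinite geodesics in $X$ asymptotic to $\ell_a$ in both directions. By the Flat Strip Theorem, $P$ is a closed convex subspace that splits isometrically as a product $\ell_a \times C$ for some complete convex subspace $C$ of $X$. Since $b$ setwise preserves $\{\xi^-, \xi^+\}$, it sends bi-infinite geodesics with these endpoints to bi-infinite geodesics with the same endpoints, so $b(P) = P$.

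Next I would argue that $C$ is bounded. Any geodesic ray in $C$ would, together with the $\R$-factor, yield an isometric embedding of a half-flat in $X$ bounded by $\ell_a$. By \Cref{lem:half flat close to flat}, such a half-flat lies in a bounded Hausdorff neighborhood of some maximal flat $F \in \F$, forcing $\ell_a$ itself to lie in a bounded neighborhood of $F$ and thus to bound a half-flat inside $F$, contradicting that $a$ is rank one. Hence $C$ contains no geodesic ray; combined with completeness of $C$ and local compactness of $X$ (from the geometric action), this forces $C$ to be bounded.

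Finally I would extract an axis for $b$ from the product structure. Since the bi-infinite geodesics in $P$ with endpoints in $\Lambda(a)$ are precisely the lines $\ell_a \times \{c\}$ and $b$ permutes them, a direct isometry computation on $\ell_a \times C$ yields $b(t, c) = (\varepsilon t + s, b_C(c))$ for some $\varepsilon \in \{\pm 1\}$, constant $s \in \R$, and isometry $b_C$ of $C$. Because $C$ is bounded and complete, $b_C$ fixes a point $c_0 \in C$ (e.g., the circumcenter of an $\langle b_C \rangle$-orbit), so the line $\ell := \ell_a \times \{c_0\}$ is $b$-invariant with endpoints $\xi^\pm$. The case $\varepsilon = -1$ would give a fixed point $(s/2, c_0)$ of $b$, contradicting that $b$ is hyperbolic, so $\varepsilon = 1$ and $s \neq 0$, making $\ell$ a CAT(0) axis for $b$; in particular $\Lambda(b) = \Lambda(a)$. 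Since $\ell$ is parallel to $\ell_a$, its parallel set is again $\ell_a \times C$ with bounded cross-section, so $b$ is also rank one. The step I expect to be the most delicate is formalizing the product decomposition and the direct computation forcing $b$ to act diagonally on $\ell_a \times C$; the rank-one and axis conclusions then drop out from the boundedness of $C$ and the hyperbolicity of $b$.
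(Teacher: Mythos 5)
Your proof is correct, but it takes a genuinely different route from the paper. The paper's proof is short and cites \cite[Theorem~3.3]{Ruane:Dynamics}: the pointwise fixed set of the boundary homeomorphism $\overline{b}$ equals $\partial \Min(b)$, so $\Lambda(a) \subseteq \partial\Min(b)$; if $b$ were not rank one, then $\partial\Min(b) \subseteq \partial F$ for a maximal flat $F$ by \Cref{lem:half flat close to flat}, contradicting that $a$ is rank one. Your approach instead analyzes the parallel set $P = \ell_a \times C$ directly: you show $C$ is bounded, use the Busemann-function/product-splitting argument to show $b$ acts diagonally, and apply the Cartan fixed-point theorem to $b_C$ to produce a $b$-invariant line in $P$. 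This is more hands-on and more self-contained (no appeal to Ruane's dynamical result), and as a bonus it cleanly handles the case where $b$ merely setwise preserves $\Lambda(a)$ and potentially swaps its two points (your $\varepsilon = -1$ case), whereas the paper's citation of Ruane strictly speaking concerns the pointwise fixed set. One small simplification: your appeal to \Cref{lem:half flat close to flat} to bound $C$ is superfluous --- a geodesic ray in $C$ already produces a half-flat bounded by $\ell_a$, which contradicts the definition of $a$ being rank one directly, with no need for isolated flats. With that shortcut your argument only uses properness of $X$ (for Arzel\`a--Ascoli and the Cartan fixed-point theorem), so it actually proves the statement in greater generality than the isolated-flats setting. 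The one step you rightly flag as delicate, that $b$ preserves the product decomposition of $P$, is most quickly justified by observing that the coordinate projection $\pi_1 : P \to \R$ agrees up to sign and an additive constant with the Busemann function of $\xi^+$, which $b$ transforms by $\beta_{\xi^+} \circ b = \beta_{b^{-1}\xi^+} + \mathrm{const}$, forcing $\pi_1 \circ b = \varepsilon\, \pi_1 + s$.
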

  \begin{proof}
  	Let $\overline{b}$ be the homeomorphism of $\partial X$ induced by $b$. By \cite[Theorem 3.3]{Ruane:Dynamics}, the set of elements of $\partial X$ fixed by $\overline{b}$ is equal to $\partial \text{Min}(b)$. Therefore, $\Lambda(a) \subseteq \partial \text{Min}(b)$. If $b$ is not rank one then any axis of $b$ bounds a half-flat, which is contained is a bounded neighborhood of a maximal flat $F$ by \Cref{lem:half flat close to flat}.
  	Hence, $\partial \Min(b) \subseteq \partial F$. 
  	However, $\Lambda(a)$ cannot be contained in the boundary of a maximal flat because $a$ is rank one.  
  	Thus, $b$ is also a rank one isometry and $\Lambda(a) = \Lambda(b)$ by Lemma~\ref{lem:Limit set of rank one isometry}. 
  \end{proof}

%==================================================  \    
\subsection{Past results building free semigroups in cube complexes}
%==================================================
Uniform exponential growth is typically proved by generating uniform length free semigroups. For instance, for a 2-dimensional cube complex Kar and Sageev show the following: 

\begin{prop}\cite[Proposition 15]{KarSageev} 
	\label{prop:KS 2 element dichotomy} 
Let $a, b$ be two distinct hyperbolic isometries of a CAT(0) square complex $X$. Then either 
\begin{enumerate}
\item $\abrackets{a,b}$ contains a 10-short free semigroup, or 
\item there exists a Euclidean subcomplex of $X$ invariant under $\abrackets{a,b}$.  
\end{enumerate}
\end{prop}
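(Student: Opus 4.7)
The plan is to execute a hyperplane ping-pong argument, with its failure forcing a common invariant Euclidean subcomplex.  The bound of $10$ on word length will come from the restriction $\dim X = 2$, which limits the powers of $a$ and $b$ required to push halfspaces strictly off themselves.

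First I would analyze the parallel subcomplexes $Y_a = E_a \times K_a$ and $Y_b = E_b \times K_b$, together with their skewer sets $\sk(a), \sk(b)$ and parallel hyperplane sets $\mathrm{p}(a), \mathrm{p}(b)$.  Since $\dim X = 2$, each Euclidean factor $E_g$ is either a quasi-line or a $2$-dimensional Euclidean strip, and every hyperplane of $\mathrm{p}(g)$ crosses every hyperplane of $\sk(g)$.  Choose combinatorial axes $\ell_a\subset Y_a$ and $\ell_b\subset Y_b$.

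The dichotomy I would set up asks whether there exist hyperplanes $\h_a \in \sk(a) \setminus (\sk(b) \cup \mathrm{p}(b))$ and $\h_b \in \sk(b) \setminus (\sk(a) \cup \mathrm{p}(a))$ whose associated halfspaces admit a ping-pong configuration.  If such hyperplanes exist, then because $\dim X = 2$ the second powers $a^2$ and $b^2$ translate the chosen halfspaces strictly into disjoint subhalfspaces.  Applying the standard ping-pong criterion, possibly after short conjugations of length at most three to align the initial halfspaces into a disjoint configuration, yields a free semigroup generated by words of $\{a,b\}$-length at most $10$.

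If no such pair of hyperplanes can be found, then every hyperplane in $\sk(a)$ lies in $\sk(b) \cup \mathrm{p}(b)$ and symmetrically.  I would argue this compatibility forces the hyperplane collection $\Hyp(Y_a) \cup \Hyp(Y_b)$ to be dual to a convex subcomplex $Z\subseteq X$ that is Euclidean---its combinatorial structure embeds isometrically into $\mathbb{E}^2$ by $\dim X = 2$ and the pairwise transversality of hyperplanes forced by the compatibility---and which is invariant under $\langle a,b\rangle$ because each generator preserves the defining hyperplane data.  This lands us in alternative (2).

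The main obstacle is calibrating the hyperplanes $\h_a$ and $\h_b$ so that their halfspaces are simultaneously close enough to the axes that the bounded powers $a^2,b^2$ suffice to move them off themselves, yet far enough apart from each other to satisfy the disjointness hypothesis of ping-pong.  This balance crucially exploits the two-dimensionality: it is exactly the restriction $\dim X = 2$ that both bounds the translation power at $2$ and prevents the intermediate configurations from proliferating, keeping the required word length bounded by the stated constant.
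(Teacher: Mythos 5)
The paper does not prove this proposition; it is quoted verbatim from \cite[Proposition~15]{KarSageev} and used as a black box in \Cref{prop:collection of isometries}, so there is no in-text argument to compare against. Assessed on its own terms, your proposal captures the correct high-level shape (hyperplane ping-pong, or else a common Euclidean subcomplex), but it has two genuine gaps.

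Your dichotomy is circular, and its negation is misstated. You branch on whether hyperplanes $\h_a\in\sk(a)\setminus(\sk(b)\cup\mathrm{p}(b))$ and $\h_b\in\sk(b)\setminus(\sk(a)\cup\mathrm{p}(a))$ ``admit a ping-pong configuration,'' but whether such a configuration exists is exactly what must be deduced from the combinatorics, not posited as one branch; and the negation of your existence statement is a disjunction (one of the two set-differences is empty, \emph{or} the hyperplanes exist but fail ping-pong), not the symmetric containment $\sk(a)\subseteq\sk(b)\cup\mathrm{p}(b)$ and $\sk(b)\subseteq\sk(a)\cup\mathrm{p}(a)$ you jump to. Kar--Sageev instead run a concrete case analysis on the skewer and parallel data (whether $\sk(a)\cap\sk(b)=\emptyset$, whether $\sk(a)=\sk(b)$, whether one isometry is peripheral to or parallel to a hyperplane the other skewers, and so on), with a dedicated lemma for each branch that either exhibits the semigroup explicitly or hands back an invariant line or flat. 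More seriously, your fallback case is not established. From the containments you assert that $\Hyp(Y_a)\cup\Hyp(Y_b)$ is dual to a Euclidean subcomplex invariant under $\langle a,b\rangle$, but invariance fails a priori---$Y_a$ is only $\langle a\rangle$-invariant and $Y_b$ only $\langle b\rangle$-invariant, and nothing in the hypothesis forces $b$ to preserve $\Hyp(Y_a)$---and Euclideanness fails too, since the hypothesis still allows $\sk(a)\cap\sk(b)\neq\emptyset$ with $\sk(a)\neq\sk(b)$, a configuration that Kar--Sageev resolve by producing a \emph{short free semigroup}, not a flat. The genuinely Euclidean outcome only arises when the axes are parallel or the skewer sets coincide, and even then one needs an argument to manufacture a single $\langle a,b\rangle$-invariant line or flat rather than two separately invariant parallel subcomplexes.
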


%%%	SELF NOTE: Euclidean subcomplex $E$ will either be a Euclidean plane or contain an $\Aut(E)$ invariant line.

In \cite{Jankiewicz}, Jankiewicz obtains the following generalization of Proposition~\ref{prop:KS 2 element dichotomy} to higher dimensional cube complexes. 

\begin{lem} \cite[Lemma 4.2]{Jankiewicz}\label{lem:Jankiewicz}
Let $a,b$ be two distinct hyperbolic isometries of a $\dimX$-dimensional CAT(0) cube complex $X$ such that $\abrackets{a,b}$ acts freely on $X$. Then one of the following hold
\begin{enumerate}
\item {(Short free semigroup)} there exists a constant $L = L(\dimX) <\infty$ such that $\abrackets{a,b}$ contains an $L$-short free semigroup, or, 
\item {(Stabilize hyperplane)} one of $\abrackets{ b^N, a^{-\dimX!} b^N a^{\dimX!} }$ or $\abrackets{ a^N, b^{-\dimX!} a^N b^{\dimX!} }$ stabilizes a hyperplane of $X$, or 
\item {(Virtually abelian powers)} 
the group  $\abrackets{ a^N, b^N }$ is virtually abelian.
\end{enumerate}
where $N = \dimX!K_3!$ and $K_3$  is the Ramsey number $R(\dimX +1, 3)$.
\end{lem}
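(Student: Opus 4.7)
The plan is to mimic the strategy of Kar--Sageev's \Cref{prop:KS 2 element dichotomy} while using Ramsey's theorem and the bound on pairwise-crossing hyperplanes in a $d$-dimensional CAT(0) cube complex to extract pairwise disjoint skewered hyperplanes along which a ping-pong argument can run. The key point is that no $d+1$ hyperplanes in a $d$-dimensional cube complex can pairwise cross, so the Ramsey number $K_3 = R(d+1,3)$ ensures that any collection of $K_3$ hyperplanes crossing an axis contains a triple of pairwise disjoint ones; this is the geometric linchpin of the argument.

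I would start by fixing a combinatorial axis $\ell_a$ of $a$ and considering the first $K_3$ hyperplanes of $\sk(a)$ that cross $\ell_a$. Coloring a pair by whether they cross or are disjoint and invoking Ramsey's theorem yields either $d+1$ pairwise crossing hyperplanes (impossible by the dimension bound) or a triple $\h^a_1,\h^a_2,\h^a_3$ of pairwise disjoint hyperplanes in $\sk(a)$. Since any halfspace $\hs$ skewered by $a$ satisfies $a^{d!}\hs \subsetneq \hs$, choosing $N = d!K_3!$ ensures both that $a^N$ has strictly nested halfspace images about each $\h^a_i$ and that any cyclic permutations among orbits of parallel hyperplanes become trivial. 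The same procedure applied to $b$ yields a triple $\h^b_1,\h^b_2,\h^b_3$ of pairwise disjoint hyperplanes in $\sk(b)$ that are likewise properly nested under $b^N$.

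Next I would examine how $b^N$ interacts with the triple $\{\h^a_i\}$. The first possibility is that the conjugate $a^{-d!}b^N a^{d!}$ shares a stabilized hyperplane with $b^N$, which happens precisely when $b^N$ maps some $a^N$-skewered hyperplane to a nearby translate; this places us in case (2). The second possibility is that the skewer systems of $a^N$ and $b^N$ collapse onto a common axis direction, forcing $\langle a^N, b^N\rangle$ to act discretely on a common Euclidean subcomplex of $X$, in which case \Cref{thm:Bieberbach} yields virtual abelianness as in case (3). When neither obstruction occurs, the pairwise disjointness of the $\h^a_i$ together with their images under small powers of $b$ allows one to select halfspaces $\hs^a \supsetneq a^N\hs^a$ and $\hs^b \supsetneq b^N\hs^b$ with disjoint complementary halfspaces, and a standard ping-pong argument produces a free semigroup in $\langle a^N, b^N\rangle$ whose generators have word length bounded by a constant depending only on $N$, hence only on $d$.

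The main obstacle is arranging the case analysis so that the precise degenerate conclusions in (2) and (3) emerge from the specific conjugators $a^{d!}$ and $b^{d!}$ appearing in the statement, rather than from some weaker alignment of the two axes. The factorial $d!$ is included so that conjugation by small powers of one generator acts without cyclic permutation on the orbit structure of the other's hyperplane system, letting failure of ping-pong be attributed cleanly either to a common stabilized hyperplane (possibly only after the $d!$-twist) or to genuine parallelism of the axes. Tracking how the Ramsey triple interacts with $b^{d!}$-conjugation and verifying a uniform word-length bound in the ping-pong configuration is where the bulk of the technical bookkeeping lies.
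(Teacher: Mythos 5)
This lemma is quoted from Jankiewicz's paper and is not proved in the present article, so there is no in-paper proof to compare against; I will evaluate your proposal on its own merits and against the known structure of such arguments.

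Your high-level ingredients are correct and do match the general strategy: in a $d$-dimensional cube complex no $d+1$ hyperplanes pairwise cross, so Ramsey's theorem with $K_3 = R(d+1,3)$ does extract a pairwise-disjoint (nested) triple of hyperplanes from $\sk(a)$, and the factor $d!$ accounts for the bound $p \le d$ on the skewering power. These are genuinely the reasons those constants appear, and ping-pong with nested halfspaces is indeed how the free semigroup is built.

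However, what you have written is a plan, not a proof, and the plan is thinnest exactly at the load-bearing step: the trichotomy. You describe case (2) as ``$b^N$ maps some $a^N$-skewered hyperplane to a nearby translate,'' but this is not a precise condition and, as stated, would not yield what the lemma asserts, namely that $\abrackets{b^N, a^{-d!}b^N a^{d!}}$ (or the symmetric group) actually \emph{stabilizes a single hyperplane}. The mechanism that produces this conclusion is that a hyperplane which is \emph{parallel to} one of the isometries (not skewered by it) and moved to itself by a bounded power of the other becomes a common invariant hyperplane for the two indicated conjugate elements; your description of the degenerate case never isolates such a hyperplane. Similarly, for case (3) you assert that the skewer systems ``collapse onto a common axis direction'' so that $\abrackets{a^N, b^N}$ acts discretely on a Euclidean subcomplex, but you never argue that the Euclidean subcomplex in question is actually $\abrackets{a^N,b^N}$-invariant, which is the content one needs before Bieberbach applies. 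Finally, the ping-pong is never run: you do not exhibit the nested halfspaces $\hs^a\supsetneq a^N\hs^a$, $\hs^b\supsetneq b^N\hs^b$ with the disjointness/containment pattern needed, nor do you produce a word-length bound $L(d)$. Your closing paragraph concedes that ``the bulk of the technical bookkeeping'' remains, and that bookkeeping is precisely the proof. As it stands, the proposal identifies the correct tools and constants but does not establish that failure of ping-pong forces one of the two specific degenerate conclusions, which is the entire content of the lemma.
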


Compared to Proposition~\ref{prop:KS 2 element dichotomy}, Lemma~\ref{lem:Jankiewicz} requires taking powers. 
However, under the additional assumption that $X$ has isolated flats and a geometric group action (as in Section~\ref{sec:isolated flats}), we can recover Proposition~\ref{prop:KS 2 element dichotomy} in all dimensions (see \Cref{lem:IFP dichotomy}).    

%%%%%%%%%%%%%%%%%%%%%%%%%%%%%%%%%%%%
%%%%%%%%%%%%%%%%%%%%%%%%%%%%%%%%%%%%
\section{CAT(0) spaces with isolated flats}\label{sec:isolated flats}
%%%%%%%%%%%%%%%%%%%%%%%%%%%%%%%%%%%%
%%%%%%%%%%%%%%%%%%%%%%%%%%%%%%%%%%%%
The goal of this section is to prove Theorem~\ref{thm:CAT(0)IFP}. We start by proving some results that lead up to \Cref{lem:IFP dichotomy}, which is the main lemma used to prove Theorem~\ref{thm:CAT(0)IFP}. In what follows, let $(X, \F)$ be a CAT(0) space with isolated flats that admits a geometric group action.  
When $X$ is a CAT(0) cube complex that admits a geometric group action, $X$ is finite dimensional and locally finite. %$X$ is proper by \cite[Chapter~I.8~Exercise~8.4(1)]{BridsonHaefliger} hence locally finite. 

\begin{lem}
	\label{lem:TwoElementPower} 
	Let $a$ and $b$ be a pair of hyperbolic isometries of $X$ such that $\la a^N, b^M \ra \leq \Isom(X)$ stabilizes a flat for some $N, M \in \mathbb{Z} \setminus \{0\}$. Then $\la a, b \ra \leq \Isom(X)$ stabilizes a maximal flat in $X$. 
\end{lem}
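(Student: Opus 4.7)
The plan is to locate a maximal flat $F' \in \mathcal{F}$ close to the flat stabilized by $\langle a^N, b^M\rangle$ and then use the isolation of flats to upgrade ``$a^N$ and $b^M$ stabilize'' to ``$a$ and $b$ stabilize.'' The key tool will be that any bi-infinite geodesic inside two bounded neighborhoods of distinct flats in $\mathcal{F}$ is forbidden by \Cref{defn:CAT(0)IFP}(2), forcing those flats to coincide.

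First, let $F$ be a flat stabilized by $\langle a^N, b^M\rangle$. By \Cref{defn:CAT(0)IFP}(1), there exists $F' \in \mathcal{F}$ with $F \subseteq N_D(F')$ for some $D<\infty$. Because $a$ is hyperbolic and $F$ is a convex $a^N$-invariant subspace on which $a^N$ restricts to a nontrivial isometry of Euclidean space with positive translation length, $a^N$ acts on $F$ as a Euclidean translation. In particular, $F$ contains a line translated by $a^N$; since $\Min(a)=\Min(a^N)$, this line is an axis $\ell_a$ of $a$. The same argument applied to $b^M$ produces an axis $\ell_b$ of $b$ contained in $F$, hence contained in $N_D(F')$.

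Next I show $aF' = F'$. Since $a$ translates $\ell_a$, we have $a\ell_a=\ell_a$, so
\[
\ell_a \;=\; a\ell_a \;\subseteq\; aN_D(F') \;=\; N_D(aF').
\]
Combining with $\ell_a \subseteq N_D(F')$ gives $\ell_a \subseteq N_D(F') \cap N_D(aF')$. Because $\ell_a$ is a bi-infinite geodesic, this intersection has infinite diameter, so \Cref{defn:CAT(0)IFP}(2) forces $aF' = F'$ (recalling that $\mathcal{F}$ is $\Isom(X)$-invariant, so $aF' \in \mathcal{F}$). The identical argument with $\ell_b$ shows $bF' = F'$, and therefore $\langle a,b\rangle$ stabilizes the maximal flat $F' \in \mathcal{F}$.

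No step is really an obstacle here — the only subtle point is making sure that $a$ (not merely $a^N$) has an axis inside $F$, which uses the standard equality $\Min(a)=\Min(a^N)$ together with the fact that an isometry of Euclidean space with positive translation length must be a pure translation, so its restriction to the $a^N$-invariant flat $F$ has axes through every point in the translation direction.
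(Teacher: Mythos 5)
Your overall strategy matches the paper's, and the final ``upgrade'' step (using \Cref{defn:CAT(0)IFP}(2) on $N_D(F') \cap N_D(aF')$ via $a\ell_a = \ell_a$) is correct. However, the step you flag as ``the only subtle point'' contains a genuine error, and the two facts you invoke to resolve it are both false.

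First, ``an isometry of Euclidean space with positive translation length must be a pure translation'' is not true: a glide reflection of $\mathbb{E}^2$ and a screw motion of $\mathbb{E}^3$ are hyperbolic Euclidean isometries that are not translations. Second, and more seriously, ``$\Min(a) = \Min(a^N)$'' is not a standard equality and is false in general. For a screw motion $a$ of $\mathbb{E}^3$ (rotate by $90^\circ$ about the $z$-axis, translate along $z$), one has $\Min(a)$ equal to the $z$-axis while $a^4$ is a pure translation with $\Min(a^4) = \mathbb{E}^3$; this isometry is cubical (after subdividing to kill hyperplane inversions) and $\mathbb{E}^3$ has isolated flats with $\F = \{\mathbb{E}^3\}$, so this sits squarely in the paper's setting. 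Consequently you cannot conclude that the $a^N$-axis you found in $F$ is an axis of $a$: it is easy to build examples where an $\langle a^N, b^M\rangle$-invariant flat $F$ contains axes of $a^N$ but is disjoint from $\Min(a)$.

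The fix, which is what the paper actually does, is not to put $\ell_a$ inside $F$ but only inside a bounded neighborhood of $F$. One shows $F$ contains some axis $\ell'$ of $a^N$ (this part of your argument is fine, once you drop the ``pure translation'' claim; the translation length of $a^N|_F$ equals $|a^N|_X$ because nearest-point projection to the convex invariant set $F$ is $a^N$-equivariant and distance-nonincreasing, and a Euclidean isometry achieves its translation length). Then any chosen axis $\ell_a$ of $a$ is \emph{also} an axis of $a^N$, and any two axes of $a^N$ are parallel by the Flat Strip Theorem, hence at finite Hausdorff distance. Thus $\ell_a \subseteq N_{D'}(F) \subseteq N_{D+D'}(F')$ for some $D' < \infty$, and your argument with the isolation condition goes through after replacing $D$ by $D + D'$.
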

\begin{proof}
Let $\la a^N, b^M \ra$ stabilize a flat $F_0$. By \Cref{defn:CAT(0)IFP}{(Maximal)}, there exists a maximal flat $F \in \F$ such that $F_0$ is contained in a $D$-neighborhood of $F$. Let $\ell_a$  and $\ell_b$ be axes of $a$ and $b$ respectively. 
Then $\ell_a$ and $\ell_b$ are contained in a bounded neighborhood of $F$. 
This is because $a^N$ has an axis in $F_0$, $\ell_a$ is also an axis of $a^N$, and any two axes of $a^N$ are parallel in $X$. The same is true for $\ell_b$ and $b^M$. The axis $\ell_a$ (resp. $\ell_b$) is also in a bounded neighborhood of $aF$ (resp. $bF$). Since the collection $\F$ is $\Isom(X)$-invariant, $aF, bF \in \F$. Now by Definition~\ref{defn:CAT(0)IFP}{(Isolated)} and maximality of $F$, we get that $F = aF$ and $F=bF$. Thus $a$ and $b$ stabilize $F$. 
\end{proof}

\begin{lem}
	\label{lem:TwoElementPowerCyclic} 
	Let $a$ and $b$ be a pair of hyperbolic isometries of $X$ such that $\la a^N, b^M \ra \leq \Isom(X)$ stabilizes a line for some $N, M \in \mathbb{Z} \setminus \{0\}$. Then either $\la a, b \ra$ stabilizes a flat or $a$ and $b$ are rank one isometries and $\la a, b \ra$ fixes $\Lambda(a)=\Lambda(b) \subset \partial X$.
\end{lem}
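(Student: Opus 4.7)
The plan is to extract from the stabilized line a common axis for $a$ and $b$, and then to split into cases based on whether or not $a$ and $b$ are rank one isometries. Since $a^N$ and $b^M$ are hyperbolic isometries stabilizing the line $\ell$, each must act on $\ell$ as a translation, so $\ell$ is an axis of both $a^N$ and $b^M$. The Flat Strip Theorem then gives that $\ell$ has finite Hausdorff distance from any axis $\ell_a$ of $a$ and any axis $\ell_b$ of $b$, and in particular $\ell_a$ and $\ell_b$ are parallel to one another. Consequently, $a$ is rank one if and only if $b$ is rank one: if one of $\ell_a, \ell_b$ bounds a half-flat, then attaching the flat strip between them produces a half-flat bounded by the other.

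Suppose first that both $a$ and $b$ are rank one. By Lemma~\ref{lem:Limit set of rank one isometry}, the two endpoints of $\ell$ are precisely $\Lambda(a^N) = \Lambda(a)$ and also $\Lambda(b^M) = \Lambda(b)$, so $\Lambda(a) = \Lambda(b)$. Since every hyperbolic isometry fixes pointwise the two endpoints of its axis, both $a$ and $b$ fix $\Lambda(a) = \Lambda(b)$ pointwise, and hence so does $\la a, b \ra$, yielding the second alternative.

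Suppose instead that neither is rank one. Then $\ell_a$ bounds a half-flat which, by Lemma~\ref{lem:half flat close to flat}, is contained in a bounded neighborhood of some maximal flat $F \in \F$, and likewise $\ell_b$ lies in a bounded neighborhood of a maximal flat $F' \in \F$. Because $\ell_a$ and $\ell_b$ are at finite Hausdorff distance, the infinite-diameter set $\ell_a$ lies in $N_\rho(F) \cap N_\rho(F')$ for some $\rho < \infty$, so the isolation condition in Definition~\ref{defn:CAT(0)IFP} forces $F = F'$. The same argument applied to the pairs $(F, aF)$ and $(F, bF)$, using that $\ell_a$ is $a$-invariant and $\ell_b$ is $b$-invariant, yields $F = aF = bF$, so $\la a, b \ra$ stabilizes $F$. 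The main technical point is precisely this iterated use of the isolation condition to absorb $\ell_a$, $\ell_b$, $aF$, and $bF$ into the single maximal flat $F$; compared to \Cref{lem:TwoElementPower}, the extra rank-one alternative appears because a single invariant line (as opposed to a higher-dimensional flat) is too thin to force the flats stabilized by $a$ and $b$ to coincide.
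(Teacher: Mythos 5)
Your proof is correct and takes essentially the same route as the paper: both split on whether the stabilized line is rank one (equivalently, whether $a$ and $b$ are rank one), and in the non--rank-one case invoke the isolation property to pin down a single maximal flat $F$ with $aF=F=bF$. The only organizational difference is that the paper first deduces that $\la a^N,b^M\ra$ stabilizes the maximal flat near $\ell$ and then cites \Cref{lem:TwoElementPower}, whereas you inline that isolation argument directly on $\ell_a$, $\ell_b$, $aF$, $bF$.
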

\begin{proof}
	Let $\ell$ be a line stabilized by $\la a^N, b^M \ra$. Suppose $\ell$ is contained in a tubular neighborhood of some maximal flat $F\in \F$. 
	Since maximal flats in $X$ are isolated, $\la a^N, b^M \ra$ stabilizes $F$. Now by Lemma~\ref{lem:TwoElementPower}, $\la a, b \ra$ stabilizes a flat. 
	
	Now suppose $\ell$ is not contained in a tubular neighborhood of any maximal flat $F\in \F$. Then by \Cref{lem:half flat close to flat}, $\ell$ is a rank one geodesic. 
	Hence, $a^N$ and $b^M$ are rank one isometries, and so are $a$ and $b$. 
	The axes of $a$ and $b$ fellow travel so they share the same endpoints $\Lambda = \Lambda(a) = \Lambda(b)$.  
	Thus $\la a, b \ra$ fixes $\Lambda$ by Lemma~\ref{lem:Limit set of rank one isometry}.
\end{proof}

We obtain a similar result when an element and its conjugate stabilize a flat or line.  

\begin{lem}
	\label{lem:TwoElementConjugate} 
	Let $a$ and $b$ be a pair of hyperbolic isometries of $X$ such that $\la a, bab^{-1} \ra \leq \Isom(X)$ stabilizes a flat or a line. Then either $\la a, b \ra$ stabilizes a flat or $a$ and $b$ are rank one isometries and $\la a, b \ra$ fixes $\Lambda(a)=\Lambda(b) \subset \partial X$.  
\end{lem}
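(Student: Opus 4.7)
The plan is to split on the dichotomy given by the hypothesis — whether $\la a, bab^{-1} \ra$ stabilizes a flat or only a line — and transfer each conclusion to $\la a, b \ra$ via a conjugation argument.

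First, suppose $\la a, bab^{-1}\ra$ stabilizes a flat. By \Cref{lem:TwoElementPower} applied with $N = M = 1$ to the pair $a, bab^{-1}$, this subgroup must in fact stabilize some maximal flat $F \in \F$. In particular $a$ stabilizes $F$. Rewriting $a = b^{-1}(bab^{-1})b$ and using that $bab^{-1}$ stabilizes $F$, we conclude that $a$ also stabilizes $b^{-1}F$. Any axis $\ell_a$ of $a$ then lies in a bounded neighborhood of both maximal flats $F$ and $b^{-1}F \in \F$. Since $\ell_a$ is unbounded, the isolation condition of \Cref{defn:CAT(0)IFP}(Isolated) forces $F = b^{-1}F$; hence $b$ stabilizes $F$, and therefore so does $\la a, b \ra$.

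Second, suppose $\la a, bab^{-1}\ra$ stabilizes a line $\ell$. Apply \Cref{lem:TwoElementPowerCyclic} with $N = M = 1$ to $a$ and $bab^{-1}$. If the first alternative holds, then $\la a, bab^{-1}\ra$ stabilizes a flat, reducing us to the previous paragraph. Otherwise, $a$ and $bab^{-1}$ are rank one isometries, $\la a, bab^{-1}\ra$ fixes $\Lambda(a) = \Lambda(bab^{-1}) \subset \partial X$. But $\Lambda(bab^{-1}) = b\,\Lambda(a)$, so $b$ preserves the two-point set $\Lambda(a)$. Since $b$ is hyperbolic, \Cref{lem:b stabilizes end points of a} yields that $b$ is also rank one with $\Lambda(b) = \Lambda(a)$, so $\la a, b\ra$ fixes $\Lambda(a) = \Lambda(b)$.

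The main obstacle is the step in the first case that promotes $a, bab^{-1}$ stabilizing $F$ to $b$ itself stabilizing $F$; this is what lets us avoid passing to powers (unlike in \Cref{lem:TwoElementPower}). The trick is to use conjugation to obtain a second maximal flat $b^{-1}F \in \F$ that shares an axis of $a$ with $F$, so isolation collapses the two flats. The rest of the argument is a clean reassembly of the earlier lemmas together with the fact that conjugation maps $\Lambda(a)$ to $\Lambda(bab^{-1})$.
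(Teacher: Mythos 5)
Your proof is correct and essentially mirrors the paper's argument: the mechanism in both is to use the isolation condition to collapse to a single maximal $\F$-flat and then exploit conjugation to transfer stabilization from the pair $\{a, bab^{-1}\}$ to $b$ itself (and, in the line case, to observe $\Lambda(bab^{-1}) = b\Lambda(a)$ and invoke \Cref{lem:b stabilizes end points of a}). Your version is a touch more economical: you invoke \Cref{lem:TwoElementPower} and \Cref{lem:TwoElementPowerCyclic} with $N = M = 1$ as black boxes rather than redoing the isolated-flats computation inline, and you exhibit $b$-stabilization via the rewrite $a = b^{-1}(bab^{-1})b$ putting an axis of $a$ near both $F$ and $b^{-1}F$, where the paper instead notes that $b\ell_a$ lies near both $F$ and $bF$ — two sides of the same coin.
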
 

\begin{proof}
Let $E$ be a line or flat stabilized by $ \abrackets{a, bab^{-1}}$ and suppose $E$ is contained in a tubular neighborhood of some maximal flat $F\in \F$.  
Let $\ell_a$  and $\ell_{bab^{-1}} := b\ell_a$ be fixed axes of $a$ and $bab^{-1}$ respectively. 
Then $\ell_a$ and $b\ell_a$ are contained in a bounded neighborhood of $F$. The axis $\ell_a$ (resp. $b\ell_a$) is also in a bounded neighborhood of $aF$ (resp. $bF$). Since the collection $\F$ is $\Isom(X)$-invariant, $aF, bF \in \F$. 
Now by \Cref{defn:CAT(0)IFP}({Isolated}), we get that $F = aF$ and $F=bF$. 
Therefore $a$ and $b$ stabilize $F$.

Now suppose $E$ does not lie in a tubular neighborhood of a maximal flat. It follows that $\dim(E) = 1$, so $E$ is an axis for $a$. 
By \Cref{lem:half flat close to flat}, $a$ is a rank one isometry. Both $a$ and $bab^{-1}$ stabilize $\Lambda(a)$, which is equal to the pair of end points of $E$. Therefore $b$ and hence $\la a, b \ra$ also fixes $\Lambda(a)$. Then by \Cref{lem:b stabilizes end points of a}, we conclude that $b$ is also a rank one isometry and $\Lambda(a) = \Lambda(b)$. 
\end{proof}
   
In light of the above results, we are able to upgrade \Cref{lem:Jankiewicz} in the setting of isolated flats.
For the remainder of this section let $X$ be a CAT(0) cube complex of dimension $\dimX$ with isolated flats that admits a geometric group action.

\begin{prop}\label{lem:IFP dichotomy} 
Let $a, b \in \Isom(X)$ be a pair of hyperbolic isometries of $X$ such that $\abrackets{a,b}$ acts freely on $X$. There exists a constant $M = M(\dimX) < \infty$ such that either \begin{enumerate}
	\item $\abrackets{a,b}$ contains an $M$-short free semigroup, or
	\item the subgroup $\abrackets{a, b}$ stabilizes a flat, or
	\item $a$ and $b$ are rank one isometries and $\la a, b \ra$ fixes $\Lambda(a) = \Lambda(b) \subset \partial X$.  
\end{enumerate}
\end{prop}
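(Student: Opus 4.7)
My strategy is to proceed by induction on the dimension $d = \dim(X)$, applying Jankiewicz's Lemma~\ref{lem:Jankiewicz} to $(a, b)$ and converting each of its three cases into one of the three conclusions of the proposition by exploiting the isolated flats lemmas already established in this section. The base case is $d \leq 2$, which is handled by Kar--Sageev's Proposition~\ref{prop:KS 2 element dichotomy}: a stabilized Euclidean subcomplex is either a flat, directly giving conclusion (2), or a line, in which case Lemma~\ref{lem:half flat close to flat} together with the argument of Lemma~\ref{lem:TwoElementPowerCyclic} yields conclusion (2) or (3).

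For the inductive step, Jankiewicz case (1) gives conclusion (1) immediately with $M(d) = L(d)$. In Jankiewicz case (3), $\abrackets{a^N, b^N}$ is virtually abelian and acts freely, so a finite-index subgroup is torsion-free abelian of some rank $k \geq 1$ containing hyperbolic elements. Passing to a further power, $\abrackets{a^{N'}, b^{N'}}$ is free abelian with hyperbolic generators, and the Flat Torus Theorem produces an invariant flat (when $k \geq 2$) or line (when $k = 1$). Applying Lemma~\ref{lem:TwoElementPower} or Lemma~\ref{lem:TwoElementPowerCyclic} transfers this structure to $\abrackets{a, b}$, yielding conclusion (2) or (3).

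The core of the argument lies in Jankiewicz case (2). Assume without loss of generality that $\abrackets{b^N,\, a^{-d!}b^N a^{d!}}$ stabilizes a hyperplane $\h$; if the two generators coincide, then $a^{d!}$ and $b^N$ commute and we reduce to the virtually abelian case above. Otherwise $\h$ is a $(d{-}1)$-dimensional CAT(0) cube complex with isolated flats by Lemma~\ref{lem:hyperplane}, and the hyperplane stabilizer in the ambient geometric group acts geometrically on $\h$. Because $b^N$ and $a^{-d!}b^N a^{d!}$ are hyperbolic on $X$ and therefore fix no point of $X$, semisimplicity of cubical isometries forces them to be hyperbolic on $\h$, and the restricted action remains free. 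The inductive hypothesis on $\h$ now produces three sub-cases.

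A short free semigroup on $\h$ expressed in these two elements yields one in $\abrackets{a, b}$ of length bounded in terms of $M(d-1)$, $N$, and $d!$, giving conclusion (1). If the pair stabilizes a flat in $\h$, then Lemma~\ref{lem:TwoElementConjugate} applied with $(x, y) = (b^N, a^{-d!})$ either yields an $\abrackets{a^{d!}, b^N}$-invariant flat, whence Lemma~\ref{lem:TwoElementPower} transfers this to $\abrackets{a, b}$, or else gives that $a^{d!}$ and $b^N$ are rank one with common limit set, which directly yields conclusion (3). Finally, if both generators act on $\h$ as rank one isometries with $\Lambda(b^N) = \Lambda(a^{-d!}b^N a^{d!}) = a^{-d!}\Lambda(b)$, then $a^{d!}$ preserves the two-point set $\Lambda(b)$ setwise, so a further power fixes it pointwise, and Lemma~\ref{lem:b stabilizes end points of a} forces that power (and hence $a$) to be rank one with $\Lambda(a) = \Lambda(b)$, yielding conclusion (3). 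The main obstacle is verifying that $\h$ inherits both the isolated flats structure and a geometric action needed to invoke the inductive hypothesis: the former comes from Lemma~\ref{lem:hyperplane}, and the latter from the standard fact that hyperplane stabilizers in a geometric cubulation act geometrically on the hyperplane.
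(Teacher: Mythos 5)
Your strategy (induction on dimension via Lemma~\ref{lem:Jankiewicz}, converting each Jankiewicz alternative into one of the three conclusions by the isolated-flats lemmas) is the same as the paper's, and most of the conversions are handled correctly.

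However, there is a genuine gap in your final sub-case of Jankiewicz case (2). The inductive hypothesis gives you that $b^N$ and $a^{-\dimX!}b^N a^{\dimX!}$ are rank one isometries \emph{of the hyperplane} $\h$, with common limit set in $\partial\h$. You then argue that a power of $a$ fixes $\Lambda(b^N)$ and invoke Lemma~\ref{lem:b stabilizes end points of a} to conclude that $a$ is rank one with $\Lambda(a) = \Lambda(b)$, ``yielding conclusion (3).'' But Lemma~\ref{lem:b stabilizes end points of a} requires the first element to be rank one in the ambient space; here $b^N$ is only known to be rank one as an isometry of $\h$, not of $X$. An axis in $\h$ can fail to bound a half-flat in $\h$ while bounding one in $X$. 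Applying the lemma inside $\h$ only gives rank one of $\h$, which is not what conclusion (3) asserts. The paper's proof explicitly addresses this by splitting into two further cases: if $a$ and $b$ happen to also be rank one in $X$, then conclusion (3) holds; if not, then by Lemma~\ref{lem:b stabilizes end points of a} and Lemma~\ref{lem:half flat close to flat} each of $a$ and $b$ stabilizes a maximal flat of $\F$, and because $\Lambda(a)=\Lambda(b)$ and flats in $\F$ are isolated, these maximal flats coincide, so $\abrackets{a,b}$ stabilizes a flat and conclusion (2) holds. Your proof needs this extra step to be complete.

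Two smaller remarks: (i) when you set $M(d) = L(d)$ in Jankiewicz case (1), remember that the short semigroup produced in case (2) lives in $\abrackets{b^N, a^{-\dimX!}b^N a^{\dimX!}}$ and must be rewritten in $a,b$; the paper sets $M(\dimX) = \max\{(2\dimX!+N)\cdot M(\dimX-1),\, L(\dimX)\}$ accordingly. (ii) Your ``further power fixes it pointwise'' step is unnecessary: a hyperbolic isometry that preserves a two-point set of $\partial X$ setwise must fix both points, since it has a unique attracting and a unique repelling fixed point.
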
 
\begin{proof}
	The proof is by induction on the dimension of $X$.  
	For the base cases, if $\dim(X) = 1$ then we are done by \Cref{lem:treeFreeSemigroup} with $M(1) = 4$.
	If $\dim(X) = 2$ then the conclusions are satisfied by \cite[Main Theorem]{KarSageev},  with $M(2) = 10$. 
	
	For the induction step, apply \Cref{lem:Jankiewicz}.  Let $M(d) = \max\braces{ (2\dimX!+N)\cdot M(d-1), L(d) }$ where $L(d)$ is the constant from \Cref{lem:Jankiewicz}.  
	If the condition {(Short free semigroup)} is satisfied then we are done.  
	If the condition {(Virtually abelian powers)} is satisfied then $\abrackets{a^N,b^N}$ stabilizes a flat or line by the Flat Torus Theorem \cite{BridsonHaefliger}. 
	By \Cref{lem:TwoElementPower} or \Cref{lem:TwoElementPowerCyclic}, conclusion (2) or (3) holds.
	If the condition {(Stabilize hyperplane)} is satisfied then up to switching $a$ and $b$, the group $H = \abrackets{a^N, b^{-\dimX!}a^Nb^{\dimX!}  }$ stabilizes a hyperplane.  
	Since hyperplanes are convex in the cube complex, the elements $a^N$ and $b^{-\dimX!}a^Nb^{\dimX!}$ have axes in $\h$ \cite[Chapter~II.6~Proposition~6.2(4)]{BridsonHaefliger}. Hence, they are both hyperbolic in the restricted action of $H$ on $\h$.  Moreover, the restricted action is free.
	Therefore by the induction hypothesis, $\abrackets{a^N,b^{-\dimX!}a^Nb^{\dimX!}}$ either contains an $M(d-1)$-short free semigroup or 
	$\abrackets{a^N,b^{-\dimX!}a^Nb^{\dimX!}}$ stabilizes a flat in $\h$ or $a^N$ and $b^{-\dimX!}a^Nb^{\dimX!}$ are rank one isometries of $\h$ fixing $\Lambda(a^N) = \Lambda(b^{-\dimX!}a^Nb^{\dimX!}) \subset  \partial \h \subset \partial X$.
	In the case when $\abrackets{a^N,b^{-\dimX!}a^Nb^{\dimX!}}$ stabilizes a flat in $\h$, conclusion (2) or (3) holds by \Cref{lem:TwoElementConjugate} followed by \Cref{lem:TwoElementPower} or \Cref{lem:TwoElementPowerCyclic}.
	In the remaining case, $b^{-\dimX!}$ fixes $\Lambda(a^N)$ because $b^{-\dimX!}\Lambda(a^N) = \Lambda(b^{-\dimX!}a^Nb^{\dimX!})$.  By \Cref{lem:b stabilizes end points of a}, $b^{-\dimX!}$ is a rank one isometry  
	of $\h$
	with $\Lambda := \Lambda(b^{-\dimX!}) = \Lambda(a^N)$.  
	
	If $a$ and $b$ are also rank one in $X$ then
	$\abrackets{a,b}$ fixes $\Lambda = \Lambda(a) = \Lambda(b)$ and we are done.
	Suppose now that $b$ is not rank one in $X$.  
	It must be that the subgroup $\la a, b \ra$ stabilizes a maximal flat $F \in \F$.
	Indeed, by \Cref{lem:b stabilizes end points of a} the element $a$ is also not rank one in $X$.  By \Cref{lem:half flat close to flat} any axis of $a$ (respectively $b$) is within finite Hausdorff distance of a maximal flat $F_a \in \F$ (respectively $F_b \in \F$) such that $a F_a = F_a$ (respectively $b F_b = F_b$).  
	Since $\Lambda(a) = \Lambda(b)$ the maximal flats $F_a, F_b$ are equal.
	Therefore $F := F_a = F_b$ is stabilized by $\la a, b \ra$.
\end{proof}

Now that we understand how pairs of hyperbolic isometries stabilize flats and pairs of points in $\partial X$, we are able to extend this to any finite collection.

\begin{lem}\label{lem:FiniteSetAbelian} 
	Let $s_1, \ldots, s_n$ be a collection of hyperbolic isometries of $X$ such that for each $1 \leq i\neq  j \leq n$, either $\la s_i, s_j \ra$ stabilizes a flat or $s_i, s_j$ are rank one isometries and $\la s_i, s_j \ra$ stabilizes $\Lambda(s_i) = \Lambda(s_j)$. Then either $\la s_1, \ldots, s_n \ra$ stabilizes a flat or for every $i \geq 1$, $s_i$ is a rank one isometry and $\la s_1, \ldots, s_n \ra$ stabilizes $\Lambda(s_1) = \cdots = \Lambda(s_n) \subset \partial X$. 
\end{lem}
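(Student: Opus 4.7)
The plan is a case analysis on whether any generator $s_i$ is a rank one isometry, exploiting the fact that the two alternatives in the hypothesis are mutually exclusive for a fixed pair. A rank one isometry has no axis bounding a half-flat, so it cannot stabilize a flat of dimension $\geq 2$; since ``flat'' in \Cref{def:flat} requires dimension $\geq 2$, any pair $(s_i, s_j)$ with $s_i$ rank one must fall into the second alternative of the hypothesis, namely the shared limit set case.

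Case 1: some $s_i$ is rank one. The observation above, applied to each pair $(s_i, s_j)$, yields that every $s_j$ is rank one with $\Lambda(s_j) = \Lambda(s_i)$. Setting $\Lambda := \Lambda(s_1) = \cdots = \Lambda(s_n)$, each generator fixes $\Lambda$, so the whole group $\langle s_1, \ldots, s_n \rangle$ does.

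Case 2: no $s_i$ is rank one. The strategy is to attach to each $s_i$ a unique maximal flat $F_i \in \mathcal{F}$ that $s_i$ stabilizes, and then deduce $F_i = F_j$ for all $i,j$. For existence, an axis $\ell_i$ of $s_i$ bounds a half-flat (since $s_i$ is not rank one) which, by \Cref{lem:half flat close to flat}, lies in a bounded neighborhood of some $F_i \in \mathcal{F}$. Uniqueness holds because two candidates $F_i, F_i'$ both near $\ell_i$ would have tubular neighborhoods of unbounded intersection along $\ell_i$, contradicting \Cref{defn:CAT(0)IFP} (Isolated). The $\Isom(X)$-invariance of $\mathcal{F}$ together with uniqueness then forces $s_i F_i = F_i$, since $s_i F_i \in \mathcal{F}$ also has $\ell_i = s_i \ell_i$ in a bounded neighborhood. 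For equality across indices, the hypothesis says $\langle s_i, s_j \rangle$ stabilizes a flat $E_{ij}$, which by \Cref{defn:CAT(0)IFP} (Maximal) lies near some $F \in \mathcal{F}$; the axes of $s_i$ and $s_j$ live in $E_{ij}$ and hence near $F$, and uniqueness gives $F = F_i = F_j$. Hence all the $F_i$ coincide, yielding a common maximal flat stabilized by $\langle s_1, \ldots, s_n\rangle$. The main technical point throughout is this uniqueness of the maximal flat associated to each non-rank-one $s_i$; since the same moves already appear in the proofs of \Cref{lem:TwoElementPower} and \Cref{lem:TwoElementPowerCyclic}, I expect no substantial new obstacle.
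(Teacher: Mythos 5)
Your argument is correct, and its overall skeleton (dichotomy on whether any generator is rank one; in the all-rank-one case the shared limit set is immediate; in the no-rank-one case the isolated flats condition pins down a common maximal flat) matches the paper's. The one place you deviate is in the second case. The paper pivots on $s_1$: it takes the flat $E_i$ stabilized by $\la s_1, s_i\ra$, puts it near a unique maximal $F_i$, and then uses the single axis $\ell$ of $s_1$ as the anchor that is simultaneously near every $F_i$, so that the Isolated condition collapses all $F_i$ to one flat $F$. You instead first attach a unique maximal flat $F_i$ to each individual $s_i$ using the non-rank-one hypothesis together with Lemma~\ref{lem:half flat close to flat}, and only then use the pairwise flats $E_{ij}$ to force $F_i = F_j$. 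Your version is more symmetric in the indices, at the cost of invoking the half-flat lemma that the paper does not need here; the paper's pivot on a single generator is shorter because it never needs to produce a flat for an isometry in isolation, only for pairs, which the hypothesis already supplies. Both routes are sound; this is a genuine but modest difference in organization rather than a different method.

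One small imprecision worth fixing in your write-up of Case 2: the axes of $s_i$ and $s_j$ need not literally lie in $E_{ij}$, only some axis of each does (and all axes of a given hyperbolic isometry are parallel), so the correct assertion is that any fixed axis $\ell_i$ is within bounded Hausdorff distance of $E_{ij}$, which is all your Isolated-flats uniqueness argument actually requires.
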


\begin{proof}  
	If $s_1$ is a rank one isometry then so is every $s_i$. Indeed, rank one isometries do not stabilize any flat and every hyperbolic isometry $g \in \stab(\Lambda(s_1))$ is rank one with $\Lambda(g) = \Lambda(s_1)$.
	
	We may thus assume that every pair in $\{s_1, \ldots, s_n\}$ stabilizes a flat. 
	Let $E_i$ be a flat stabilized by $\la s_1, s_i \ra$ where $2 \leq i \leq n$. 
	By \Cref{defn:CAT(0)IFP}{(Maximal)}, $E_i$ is contained in a tubular neighborhood of  maximal flat  $F_i \in \F$. 
	We argue as in proof of Lemma~\ref{lem:TwoElementPower} that $\la s_1, s_i \ra$ stabilizes $F_i$. 
	Let $\ell$ be an axis of $s_1$. 
	Then $\ell$ is contained in a bounded neighborhood of $F_{i}$ for all $2 \leq i \leq n$. Therefore by \Cref{defn:CAT(0)IFP}{(Isolated)}, $F_{i} = F_{j} =: F$ for all $i \neq j$. 
	Therefore, each $s_i$ stabilizes $F$ and hence $\la s_1, \ldots, s_n \ra$ stabilizes a flat in $X$. 
\end{proof}

We are now ready to prove \Cref{thm:CAT(0)IFP}.

\ThmB*

\begin{proof}
	Let $S = \{s_1, \ldots, s_n\}$ be a finite generating set for $G$. Since the action is free, each $s_i$ is a hyperbolic isometry of $X$. For every $1 \leq i \neq j \leq n$, consider the pair $s_i$ and $s_j$. By \Cref{lem:IFP dichotomy} applied to $s_i$ and $s_j$, if  there exists a constant $M = M(\dimX) <\infty$ such that $\abrackets{s_i, s_j}$ contains an $M$-short free semigroup,  then $w(G) \geq \sqrt[M]{2} =: w_d $. 
	So suppose for all pairs $i \neq j$, either $\la s_i, s_j \ra$ stabilizes a flat or $s_i, s_j$ are rank one isometries and $\la s_i, s_j \ra$ stabilizes $\Lambda(s_i) = \Lambda(s_j)$. 
	By \Cref{lem:FiniteSetAbelian}, either $G = \la s_1, \ldots, s_n \ra$ stabilizes a flat or for every $i \geq 1$, $s_i$ is a rank one isometry and $G$ stabilizes $\Lambda := \Lambda(s_1) = \cdots = \Lambda(s_n) \subset \partial X$. 
	
	First suppose $G$ stabilizes a flat. Since $G$ acts freely on $X$, it is a discrete subgroup of isometries of the flat. Thus by Bieberbach's theorem, the group $G$ is virtually abelian. 
	 
	Now suppose each $s_i$ is a rank one isometry and $G$ stabilizes $\Lambda$. By \Cref{lem:Ya equal to Yb}, the parallel subcomplex $Y_{s_i}$ coincides with $Y_{s_j}$ for all $i \neq j$. Let $Y := Y_{s_1} = \dots =  Y_{s_n}$.
	By \Cref{prop:Yg maximal}, $Y$ is a quasi-line. 
	Since $G$ acts freely on $X$, $G$ acts freely and hence properly on $Y$. 
	By \Cref{prop:Yg maximal}, each $s_i$ acts cocompactly on $Y$.
	Hence, $G$ acts geometrically on $Y$. 
	Thus, $G$ is two-ended and hence virtually infinite cyclic.  
\end{proof}

\vspace{.4cm}
If instead of isolated flats, we impose the condition of hyperbolicity on our CAT(0) cube complex $X$, then we can use the same proof strategy as for Theorem~\ref{thm:CAT(0)IFP} to get uniform exponential growth where the requirement of a geometric group action is replaced with the weaker hypothesis of a weakly properly discontinuous action introduced Bestvina and Fujiwara \cite[Section~3]{BF}.
(see \cite{Osin} for more details on acylindrical hyperbolicity).
We first recall a lemma from a paper of Dahmani, Guirardel, and Osin. 

\begin{lem}\cite[Lemma 6.5]{DahmaniGuirardelOsin}
	\label{lem:DGO}
	Let $G$ be a group acting on a $\delta$-hyperbolic space $X$ and let $h \in G$ be a hyperbolic WPD element. Then $h$ is contained in a unique maximal virtually cyclic subgroup of $G$, $E(h)$. Moreover, $E(h) = \{g \in G | d_{Haus}(g\ell, \ell) < \infty\}$, where $\ell$ is a quasi-geodesic axis of $h$ in $X$.  
\end{lem}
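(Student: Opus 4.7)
The plan is to work directly with the set $E(h) := \{g \in G : d_{Haus}(g\ell, \ell) < \infty\}$ and establish three things: (i) $E(h)$ is a subgroup of $G$ containing $\langle h \rangle$, (ii) $E(h)$ is virtually cyclic, and (iii) any virtually cyclic subgroup $V \leq G$ with $h \in V$ is contained in $E(h)$. Property (i) is immediate from the isometry-invariance of $d_{Haus}$ and the triangle inequality, together with the fact that $d_{Haus}(h\ell, \ell)$ is finite because $\ell$ is a quasi-axis for $h$. Combined, (ii) and (iii) yield that $E(h)$ is the unique maximal virtually cyclic subgroup of $G$ containing $h$.

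The crux is step (ii), for which I would use the Morse lemma for quasi-geodesics in $\delta$-hyperbolic spaces. Writing $\ell$ as a $(\lambda, K)$-quasi-geodesic with endpoints $\xi^\pm \in \partial X$, for any $g \in E(h)$ the translate $g\ell$ is again a $(\lambda, K)$-quasi-geodesic within finite Hausdorff distance of $\ell$; therefore $g\ell$ has the same endpoint pair $\{\xi^-, \xi^+\}$ and in fact lies within a \emph{uniform} Morse constant $C = C(\delta, \lambda, K)$ of $\ell$. The induced permutation of $\{\xi^-, \xi^+\}$ gives a homomorphism $\sigma : E(h) \to \mathbb{Z}/2$; let $E^+(h) := \ker \sigma$. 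For $g \in E^+(h)$, I would define a translation number $\tau(g) \in \mathbb{R}$ as the asymptotic signed displacement along a parametrization of $\ell$; because elements of $E^+(h)$ fix both endpoints of $\ell$ and stay uniformly $C$-close to $\ell$, this is a well-defined homomorphism with $\tau(h) \neq 0$.

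I would then use WPD to show that $K := \ker \tau$ is finite and that $\tau(E^+(h))$ is discrete in $\mathbb{R}$. Fix $x \in \ell$. For $g \in K$, the uniform bound $d_{Haus}(g\ell, \ell) \leq C$ combined with $\tau(g) = 0$ forces both $d(x, gx)$ and $d(h^N x, g h^N x)$ to be bounded by a constant $\epsilon$ depending only on $C$ and $\delta$, independent of $g$. Applying the WPD property of $h$ at $x$ with this uniform $\epsilon$ yields finitely many such $g$, so $K$ is finite. For discreteness, given any sequence $g_n \in E^+(h)$ with $\tau(g_n) \to 0$, multiply each $g_n$ by a suitable power $h^{k_n}$ to keep $|\tau(g_n h^{k_n})| \leq |\tau(h)|/2$; the same WPD argument bounds the number of such elements, so $\tau(g_n) = 0$ eventually. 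Therefore $\tau(E^+(h))$ is infinite cyclic and $E(h)$ is virtually cyclic.

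Step (iii) is a brief finishing argument. If $V$ is virtually cyclic with $h \in V$, then $\langle h \rangle$ has finite index in $V$, so for every $g \in V$ there exists $n \geq 1$ with $g h^n g^{-1} = h^{\pm n}$. The translate $g\ell$ is a quasi-axis of $g h^n g^{-1} = h^{\pm n}$, while $\ell$ is a quasi-axis of $h^{\pm n}$, so by the Morse lemma they have finite Hausdorff distance, giving $g \in E(h)$. The main obstacle in this plan is the careful application of WPD in step (ii): one must first invoke the Morse lemma to obtain a \emph{uniform} Hausdorff bound over $E^+(h)$, since without it the displacements $d(x, gx)$ and $d(h^N x, g h^N x)$ would depend on $g$ and WPD could not be applied with a single threshold $\epsilon$.
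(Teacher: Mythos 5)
The paper does not prove this statement; it is quoted verbatim from Dahmani--Guirardel--Osin \cite{DahmaniGuirardelOsin}, so there is no ``paper's own proof'' to compare against. Your outline reconstructs the standard Bestvina--Fujiwara-style argument, and most of it is sound: invoking the Morse lemma to obtain a \emph{uniform} Hausdorff bound $C$ over all of $E(h)$ is the crucial observation, the index-two reduction to the subgroup $E^+(h)$ fixing both endpoints is correct, the quantifier order in your WPD application (fix $\epsilon$ depending on $C, \delta$, then obtain $N$ and finiteness) is right, and step (iii) is a clean and standard finish.

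The gap is the assertion that the asymptotic signed displacement $\tau \colon E^+(h) \to \mathbb{R}$ is a well-defined \emph{homomorphism}. Along a quasi-geodesic axis $\ell$ in a $\delta$-hyperbolic space, an element $g \in E^+(h)$ does not literally translate $\ell$; it only moves it within Hausdorff distance $C$ of itself, so the displacement along $\ell$ is a priori only a \emph{quasi-morphism} with defect controlled by $\delta$, $C$, and the quasi-geodesic constants. Without additivity, $K := \ker\tau$ need not be a subgroup, ``$\tau(E^+(h))$ is a discrete subgroup of $\mathbb{R}$, hence infinite cyclic'' does not parse, and your discreteness argument (translating by powers of $h$ to bring $\tau$ into a fixed window) uses additivity implicitly. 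The standard repair keeps your WPD input but drops the homomorphism claim: show directly that for every $T > 0$ the set $\{g \in E^+(h) : |\tau(g)| \leq T\}$ is finite (by WPD plus the uniform Morse bound), deduce that $E^+(h)$ acts properly on the quasi-line $N_C(\ell)$, note that the action is cobounded because $\langle h \rangle$ already acts coboundedly, and conclude $E^+(h)$ is quasi-isometric to $\mathbb{R}$ and therefore two-ended, hence virtually cyclic. Alternatively, observe that every commutator $[g_1, g_2]$ has $|\tau([g_1,g_2])|$ at most twice the defect, so $[E^+(h), E^+(h)]$ lands in a finite set and one can argue from there. Either route avoids ever claiming $\tau$ is a homomorphism.
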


In more general actions on hyperbolic spaces, stabilizers of endpoints of hyperbolic isometries need not be virtually abelian let alone virtually cyclic.  By requiring that hyperbolic isometries be WPD, we can prove the following.  

\hyperbolic

\begin{proof} 
Let $a, b \in G$ be two distinct hyperbolic isometries of $X$. Suppose $\la a^M, b^N \ra$ stabilizes a line $\ell$ in $X$. Then $a^M, b^N$ fix the endpoints of $\ell$. Since $a$ and $a^N$ share the same fixed points in $\partial X$, we conclude that $a$ and $b$ fix the same pair of points in $\partial X$. This implies that $b\ell_a$ has finite Hausdorff distance from $\ell_a$. Thus by Lemma~\ref{lem:DGO}, $b \in E(a)$ and $\la a,b \ra$ is virtually cyclic. 
Similarly, if $a$ and $bab^{-1}$ stabilize a line then $\abrackets{a,b}$ is virtually cyclic because $b \in E(a)$. 

Now let $\{s_1, \ldots, s_n \}$ be a finite generating set of $G$ such that each $s_i$ acts as a hyperbolic isometry of $X$.   
Then the result follows as in the proof of \Cref{lem:IFP dichotomy}, \Cref{lem:FiniteSetAbelian} and \Cref{thm:CAT(0)IFP}.  
\end{proof}

\begin{remark}
Let $G$ be a group acting freely on a CAT(0) cube complex $X$. If $G$ satisfies the conclusions of \Cref{lem:TwoElementPower}, \Cref{lem:TwoElementPowerCyclic}, \Cref{lem:TwoElementConjugate}, and \Cref{lem:FiniteSetAbelian}, then we can use \Cref{lem:Jankiewicz} as above to obtain a uniform exponential growth result for $G$. 

A simple example to illustrate the difficulty of working with powers is the following: Let $G = \la x, y | [x^2, y^2]=1 \ra$. Then $G$ acts freely on a CAT(0) cube complex because its presentation 2-complex can be subdivided into a CAT(0) square complex. The subgroup $\la x^2, y^2 \ra$ is free abelian. However, $G$ is far from being virtually abelian. The group $G$ is isomorphic to a (f.g.\ free)-by-cyclic group. A brief outline is as follows: we change the presentation by replacing the generator $y$ by $z=xy$ and we get $G=\la x, z | x^2zx^{-1} z x^{-2} z^{-1}xz^{-1} = 1 \ra$. Consider the epimorphism $\phi\colon G \to \mathbb{Z}$ which sends $x$ to $1$ and $z$ to 0. Then by Brown's criterion \cite{Brown} (see also \cite[Section 5]{DunfieldThurston}), $\text{ker}(\phi)$ is finitely generated and by Magnus's Freiheitssatz \cite{Magnus} it is a free group. Thus $G$ is isomorphic to $\text{ker}(\phi)\rtimes\mathbb{Z}$. 
\end{remark}

%%%%%%%%%%%%%%%%%%%%%%%%%%%%%%%%%%%%%%%%%%%%%%%%%%%%%%%%
%%%%%%%%%%%%%%%%%%%%%%%%%%%%%%%%%%%%%%%%%%%%%%%%%%%%%%%%
    \section{Generating hyperbolic isometry}\label{sec:generating hyperbolic isometry}
%%%%%%%%%%%%%%%%%%%%%%%%%%%%%%%%%%%%%%%%%%%%%%%%%%%%%%%%
%%%%%%%%%%%%%%%%%%%%%%%%%%%%%%%%%%%%%%%%%%%%%%%%%%%%%%%%
In this section, our goal is to produce a hyperbolic isometry $g$ of a CAT(0) cube complex $X$ by composing two elliptic isometries $a$ and $b$, such that $g$ has uniform length in $a$ and $b$. 
For example, if $a$ and $b$ are two elliptic isometries of a tree with disjoint fixed points, then $g= ab$ is a hyperbolic isometry of the tree \cite[I.~Proposition~26]{Serre}.  
We obtain an analogous result when $X$ has dimension 2 or 3.

We first prove a general lemma quantifying how elliptic isometries of CAT(0) cube complexes interact with the cubical hull of their fixed sets. 

\begin{lem}\label{lem:cHull fixed}
Let $X$ be a $\dimX$-dimensional CAT(0) cube complex and let $a$ be an elliptic isometry of $X$. Then $\Hull(\Fix(a))$ is pointwise fixed by $a^{k}$ where $k = \text{lcm}\{1, 2, \ldots, \dimX \}$. 
\end{lem}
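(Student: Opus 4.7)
The plan is to study the $\langle a\rangle$-action on $\Hyp(\Fix(a))$, which by \Cref{obs:cHull} is precisely the collection of hyperplanes dual to $H:=\Hull(\Fix(a))$, and to show that every $\langle a\rangle$-orbit on this set has size at most $d$. Once this is done, $k=\mathrm{lcm}\{1,\ldots,d\}$ will force $a^k$ to stabilize every hyperplane of $H$ setwise, and the no-inversion hypothesis upgrades this to pointwise fixation of $H$.

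The crux will be the observation that for any $\mathfrak{h}\in\Hyp(\Fix(a))$ the intersection $F_\mathfrak{h}:=\mathfrak{h}\cap\Fix(a)$ is a non-empty $\langle a\rangle$-invariant common to the whole orbit of $\mathfrak{h}$. Non-emptiness uses CAT(0) convexity of $\Fix(a)$: two points of $\Fix(a)$ separated by $\mathfrak{h}$ are joined by a CAT(0) geodesic in $\Fix(a)$, which necessarily meets $\mathfrak{h}$. Invariance follows because $a$ fixes $\Fix(a)$ pointwise, giving
\[
a\mathfrak{h}\cap\Fix(a) = a\bigl(\mathfrak{h}\cap\Fix(a)\bigr) = \mathfrak{h}\cap\Fix(a),
\]
and iterating yields $F_{a^i\mathfrak{h}}=F_\mathfrak{h}$ for every $i$. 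Hence every pair of hyperplanes in the orbit shares a point of $F_\mathfrak{h}$, and since any two hyperplanes of a CAT(0) cube complex that meet in a point cross transversely, the orbit is a family of pairwise-crossing hyperplanes. In a $d$-dimensional CAT(0) cube complex such a family is dual to a cube and therefore has at most $d$ elements.

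Taking $k=\mathrm{lcm}\{1,\ldots,d\}$ now makes $a^k$ stabilize every hyperplane of $\Hyp(H)$ setwise. Because $X$ is assumed to admit no hyperplane inversions, $a^k$ preserves both halfspaces of each such hyperplane. A vertex of $H$ is determined by its orientation on $\Hyp(H)$, so $a^k$ fixes every vertex of $H$; any cubical isometry fixing every vertex of a cube complex is the identity, so $a^k$ pointwise fixes $H$ as claimed. The main obstacle will be noticing the invariance $F_{a\mathfrak{h}}=F_\mathfrak{h}$ and using it to force the orbit hyperplanes to share a point; the subsequent steps are routine consequences of the wall structure and the no-inversion hypothesis.
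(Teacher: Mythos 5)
Your argument is correct and arrives at the same intermediate conclusion as the paper's proof -- that the $\langle a\rangle$-orbit of any $\h\in\Hyp(\Fix(a))$ has size at most $\dimX$, so $a^k$ stabilizes every such hyperplane and therefore fixes $\Hull(\Fix(a))$ pointwise -- but you reach that conclusion by a different route. The paper's proof picks a (smallest) open cube $C$ containing a fixed point of $a$ that $\h$ crosses, observes that $a$ preserves $C$ and hence permutes the at most $\dimX$ hyperplanes dual to $C$, and then applies the lcm bound to that permutation; the fact that the orbit is a pairwise-crossing family is implicit in $C$ being a cube. Your proof instead produces a nonempty $\langle a\rangle$-invariant set $F_\h = \h\cap\Fix(a)$ shared by the whole orbit, concludes directly that the orbit is pairwise-crossing, and invokes the Helly property to cap its size at $\dimX$. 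Your version avoids the (slightly delicate, and left implicit in the paper) step of verifying that every $\h\in\Hyp(\Fix(a))$ actually crosses some open cube meeting $\Fix(a)$, replacing it with the more self-contained observation that $\h\cap\Fix(a)\neq\emptyset$ by convexity; in exchange, it leans a bit more heavily on the wallspace/Helly formalism. You also make explicit the passage from setwise hyperplane stabilization to pointwise fixing via the no-inversion hypothesis and vertex orientations, which the paper asserts without comment. Both arguments are sound.
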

\begin{proof}
Let $A:= \Fix(a)$. By \Cref{obs:cHull}, $\Hull(A)$ is dual to the collection of hyperplanes $\Hyp(A)$. We will show that $a^k$ fixes each hyperplane in $\Hyp(A)$, which implies that $a^k$ pointwise fixes $\Hull(A)$.

Let $A_0$ be the union of all open cubes of $X$ that intersect $A$ non-trivially. 
Then $a$ preserves every cube $C \in A_0$. Also every hyperplane that crosses $C$ is in $\Hyp(A)$. Since $C$ contains fixed points of $a$, the action of $a$ on $C$ is determined by a permutation of at most $\dimX$ hyperplanes. 
However, the order of every cyclic permutation group on $\dimX$ objects divides $k = \lcm\braces{1, 2, \dots, \dimX}$, so $a^k$ is a trivial permutation of hyperplanes crossing a cube. Therefore, $a^k$ fixes each hyperplane in $\Hyp(A)$. 
\end{proof}

The next lemma differs from \Cref{prop:elliptic hyperbolic} in that the conclusion involves the subgroup $\la a^k, b^k \ra$ instead of $\la a, b \ra$. 
\begin{lem}\label{lem:separatedFixSetHyperbolic}
	Let $a,b$ be two elliptic isometries of a CAT(0) cube complex $X$ of dimension $\dimX$. If their fixed sets are separated by a hyperplane then there exists a hyperbolic isometry $g \in \la a, b \ra$, such that $g$ has length at most $2\dimX$ in $a$ and $b$. 
	Otherwise, the subgroup $\la a^{k}, b^k \ra$, where %$k = \dimX!$ or 
	$k=\text{lcm}\{1, 2, \ldots, \dimX \}$, fixes pointwise the intersection $\Hull(\Fix(a)) \cap \Hull(\Fix(b))$. 
\end{lem}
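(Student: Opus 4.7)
The proof naturally splits according to whether some hyperplane separates $\Fix(a)$ and $\Fix(b)$.  For the ``otherwise'' case, first note that a hyperplane separates two subsets if and only if it separates their cubical convex hulls, so no hyperplane separates the convex subcomplexes $\Hull(\Fix(a))$ and $\Hull(\Fix(b))$.  By the Helly property for convex subcomplexes of a CAT(0) cube complex, these two subcomplexes therefore intersect.  Applying \Cref{lem:cHull fixed} to each of $a$ and $b$, the powers $a^{k}$ and $b^{k}$ pointwise fix $\Hull(\Fix(a))$ and $\Hull(\Fix(b))$ respectively, so $\la a^{k}, b^{k}\ra$ pointwise fixes their intersection, as required.

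For the ``separated'' case, fix a separating hyperplane $\h$ with halfspaces $\hs_a \supseteq \Fix(a)$ and $\hs_b \supseteq \Fix(b)$; strict separation gives $\Fix(a) \cap \Fix(b) = \emptyset$ (otherwise any shared fixed point would lie on $\h$ and one would fall back to the ``otherwise'' case).  I would proceed by induction on $d = \dim X$.  The base case $d = 1$ is the standard tree fact: the product $ab$ of two elliptic tree isometries with fixed sets on opposite sides of an edge is hyperbolic, with length $2 = 2d$.  For the inductive step, first consider the sub-case $a\h = b\h = \h$.  Then $a|_\h$ and $b|_\h$ are cubical isometries of the $(d-1)$-dimensional CAT(0) cube complex $\h$.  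Because $a$ preserves $\h$ it commutes with the closest-point projection $\pi_\h$, which gives $\Fix(a|_\h) = \pi_\h(\Fix(a)) \subseteq \Fix(a) \cap \h$; in particular $a|_\h$ is elliptic, and similarly for $b$.  The fixed sets $\Fix(a|_\h), \Fix(b|_\h)$ are disjoint in $\h$ since $\Fix(a) \cap \Fix(b) = \emptyset$, hence separated by a hyperplane of $\h$.  The inductive hypothesis supplies a word $w(a,b)$ of length at most $2(d-1)$ acting hyperbolically on $\h$, which remains hyperbolic on $X$ because $\h$ is convex.

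The delicate sub-case is when $a\h \neq \h$ (up to swapping $a$ and $b$).  The naive hope that $ab$ itself is hyperbolic fails in general, as composing two $\pi/2$-rotations of the Euclidean plane with distinct centres produces a rotation rather than a translation.  The plan is a ping-pong argument on hyperplanes, exploiting that $\Fix(a) \subseteq \hs_a \cap a\hs_a$ and $\Fix(b) \subseteq \hs_b \cap b\hs_b$ prevent $a$ or $b$ from strictly skewering $\h$ on its own.  By carefully tracking the positions of the translates $a^{i}b^{j}\h$ relative to $\h$ and alternately applying $a$ and $b$ to successively push a hyperplane strictly deeper into $\hs_b$, one should produce an element $g$ of length at most $2d$ in $a, b$ with $g\hs_b \subsetneq \hs_b$, making $g$ hyperbolic by Haglund's semisimplicity.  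This bookkeeping of hyperplane configurations, in particular the counting that yields exactly the bound $2d$, is the main obstacle; one expects to need $d$ alternations to force enough nesting given that the two hyperplanes $\h$ and $a\h$ can cross.
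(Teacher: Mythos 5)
Your treatment of the ``otherwise'' case is correct and matches the paper's argument exactly: if no hyperplane separates the fixed sets then the hulls intersect by the Helly property, and \Cref{lem:cHull fixed} applied to each factor finishes the proof.

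In the separated case, however, you have a genuine gap, and moreover your intended induction cannot get off the ground for a structural reason you have overlooked. The sub-case $a\h = b\h = \h$ is vacuous: if $a$ preserves $\h$, then since $a$ is elliptic it has bounded orbits in the complete convex subspace $\h$, so by the Bruhat--Tits fixed point theorem $a|_\h$ fixes a point of $\h$; but that fixed point lies in $\Fix(a)\cap\h$, contradicting the hypothesis that $\h$ strictly separates $\Fix(a)$ from $\Fix(b)$. (This is exactly the first observation in the paper's proof.) So the only case that can actually arise is the one you explicitly leave open---where $a\h\neq\h$ or $b\h\neq\h$---and you acknowledge that your ping-pong ``bookkeeping'' there is incomplete. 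As a side remark, even if the $a\h=\h$ sub-case were available, your claim that disjoint CAT(0)-convex subsets of $\h$ are automatically separated by a hyperplane is false (disjoint convex sets can have intersecting cubical hulls, e.g.\ a diagonal of a square and a corner vertex); you would then land in the ``otherwise'' branch of the inductive hypothesis, which produces a common fixed point rather than a hyperbolic element, and the induction would not close.

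The paper avoids induction entirely via a clean dimension count. Look at the $d+1$ hyperplanes $\h, a\h, \ldots, a^d\h$. Since $X$ is $d$-dimensional, two of them must be equal or disjoint. If $a\h=\h$, the Bruhat--Tits argument above gives a contradiction. If $a^k\h=\h$ for minimal $k$ with $2\leq k\leq d$ and all intermediate translates $a^m\h$ ($1\leq m<k$) cross $\h$, then the collection $\{\h, a\h, \ldots, a^{k-1}\h\}$ is $a$-invariant and pairwise crossing, so by Helly it has a common, $a$-invariant, convex intersection; again Bruhat--Tits forces $a$ to fix a point inside $\h$, a contradiction. Therefore some $a^r\h$ with $1\leq r\leq d$ is disjoint from $\h$, and by the same argument some $b^s\h$ with $1\leq s\leq d$ is disjoint from $\h$. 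Tracking which side of $\h$ these translates lie on (using that $\Fix(a)\subset\hs$ forces $a^r\h$ into the $A$-side halfspace and likewise for $b$), one obtains a strict nesting $b^s\hs\subsetneq\hs\subsetneq a^r\hs$ of halfspaces, so $a^{-r}b^s$ skewers $\h$ and is hyperbolic of length at most $2d$. That single dimension argument replaces the induction you were attempting.
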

\begin{proof}
Let $\h$ be a hyperplane that separates $A:=\Fix(a)$ and $B:= \Fix(b)$. Since $X$ is $\dimX$-dimensional, there are two hyperplanes in $\{\h, a\h, a^2\h, \ldots, a^d \h\}$ that are either equal or disjoint. If $\h = a\h$, then the point in $\h$ closest to $A$ is fixed by $a$, which is not possible since $\h$ separates $A$ and $B$. Suppose $\h = a^k\h$ for some $2 \leq k \leq d$, and for all $m=1, \dots, k-1$ we have $\h \neq a^m \h$ and $\h \cap a^m \h \neq \emptyset$. Then the collection $H_k:= \{\h, a\h, \ldots, a^{k-1}\h\}$ is invariant under $a$. Also each pair of hyperplanes in $H_k$ intersects, therefore by the Helly property for hyperplanes they all have a common point of intersection which is invariant under $a$. This is again not possible since $\h$ separates $A$ and $B$. Therefore, there exists $r,s \in \{1,2, \ldots, d\}$ such that $a^r\h$ and $b^s\h$ are disjoint from $\h$ and contained in different halfspaces determined by $\h$. Let $\hs$ be the halfspace containing $A$. Then we have $b^s \hs \subset a^r \hs$. See Figure~\ref{fig:generating hyperbolic isometry}. Thus $a^{-r}b^s$ is a hyperbolic isometry of length at most $2\dimX$ in $a$ and $b$.

\begin{figure}[h]
\includegraphics[scale=0.25]{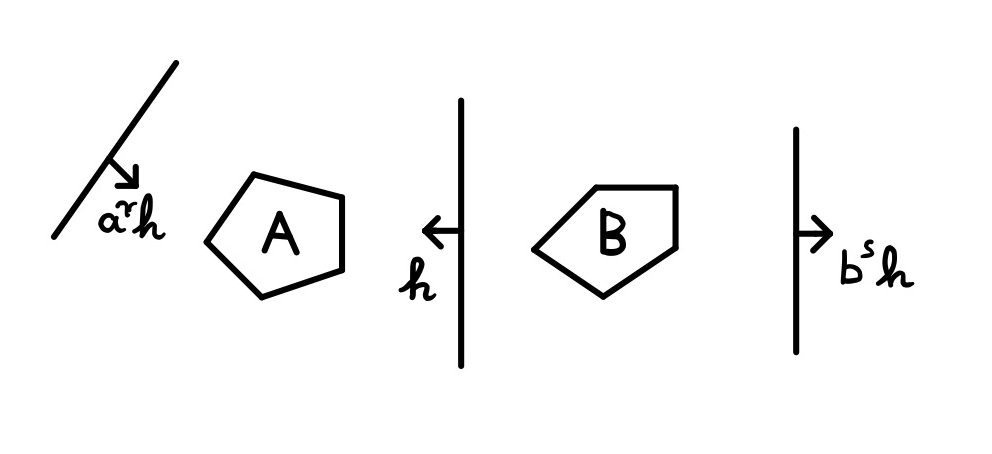}\caption{The hyperplanes $a^r\h$ and $b^s\h$ are disjoint from $\h$.}\label{fig:generating hyperbolic isometry}
\end{figure}

If there is no hyperplane separating $A$ and $B$, then $\Hull(A)$ and $\Hull(B)$ intersect in a non-empty set. Therefore, by \Cref{lem:cHull fixed}, $a^k$ and $b^k$ fix $\Hull(A) \cap \Hull(B)$, where $k = \text{lcm}\{1, 2, \ldots, \dimX \}$. %or $k= \dimX!$. 
\end{proof}

We will now prove \Cref{prop:elliptic hyperbolic}. Recall that the \emph{free Burnside group} $B(m,n)$ is the quotient of the free group on $m$ generators by the normal subgroup generated by the $n^{th}$ powers of all the elements. If $m\geq2$ and $n$ is sufficiently large, then $B(m,n)$ is infinite \cite{NovikovAdjan68}. However, for small values of $n$, some are known to be finite. 
For instance, $B(2,2) \cong \mathbb{Z}/2\mathbb{Z} \oplus \mathbb{Z}/2\mathbb{Z}$ and $B(2,6)$ are known to be finite \cite{Hall}. In order to extend our proof of Proposition~\ref{prop:elliptic hyperbolic} for cube complexes of dimension $\dimX \geq 4$, we would need $B(2,k)$, where $k = \text{lcm}\{1,2, \ldots, d\}$, to be finite. However, already for $\dimX = 4$, it is not known whether $B(2, 12)$ is finite or not.  

\ellipticHyperbolic

\begin{proof}
Let $F(\alpha,\beta)$ be the free group on two generators, generated by $\alpha$ and $\beta$. First assume $X$ is 2-dimensional. Let $K$ be the kernel of the map from $F(\alpha,\beta)$ to $B(2,2)$. Then $K$ is finitely generated by $\alpha^2, \beta^2, \alpha \beta^2\alpha^{-1}, \beta\alpha^2\beta^{-1}$. %% SELF-NOTE: This can be see by drawing a 4-sheetd cover of a rose on two petals whose group of symmetries is $B(2,2)$. 
Let $w_1 = \alpha, w_2 = \beta, w_3 = \alpha \beta \alpha^{-1}, w_4 = \beta\alpha \beta^{-1}$. Then $K$ is generated by $w_1^2, w_2^2, w_3^2, w_4^2$ and each $w_i$ has length at most 3 in $F(\alpha,\beta)$. Let $\phi \colon F(\alpha,\beta) \to \la a, b \ra$ be the map that sends $\alpha$ to $a$ and $\beta$ to $b$. Then $\overline{w_i}:=\phi(w_i)$ for $ 1 \leq i \leq 4$ also has length at most 3 in $\la a, b \ra$. 
The following is a schematic: 
\[ \xymatrix{ 1 \ar[r]	&K	\ar@{^{(}->}[r] \ar[d]	&F(\alpha,\beta) \ar@{->>}[r] \ar[d]^{\phi}	&B(2,2) \ar[r] \ar[d]	&1\\
			1 \ar[r]	&\phi(K)	\ar@{^{(}->}[r] 	&\la a, b \ra \ar@{->>}[r] &(\text{finite}) \ar[r] &1} \]
 If $\overline{w_i}$ is a hyperbolic isometry of $X$ for some $ 1 \leq i \leq 4$, then we are done. So suppose each $\overline{w_i}$ is elliptic in $X$. For $i \neq j$, by Lemma~\ref{lem:separatedFixSetHyperbolic}, either  $\la \overline{w_i}^2, \overline{w_j}^2 \ra$ fixes $\Hull(\Fix(\overline{w_i})) \cap \Hull(\Fix(\overline{w_j}))$, or there exists a hyperbolic isometry in $\la \overline{w_i}, \overline{w_j} \ra$ of length at most 4 in $\overline{w_i}, \overline{w_j} $, so of length at most $4\cdot3 = 12 =: L$ in $\la a, b \ra$. Suppose the latter happens for each pair $\overline{w_i}, \overline{w_j}$. By Helly's property for cubically convex sets, there exists a point $x \in \bigcap \Hull(\Fix(\overline{w_i}))$, which is fixed by each $\overline{w_i}^2$ and hence by $\phi(K)$. Since $K$ is a finite index subgroup of $F(\alpha,\beta)$, $\phi(K)$ is a finite index subgroup of $\la a, b \ra$. Thus $\la a, b \ra$ has a global fixed point in $X$.

If $X$ is 3-dimensional, then we consider the group $B(2,6)$ instead of $B(2,2)$ which is also finite. Let $M >0$ be the maximum length in $\alpha,\beta$ of the elements in the smallest finite generating set of $K$. Then as above, we can either find a hyperbolic isometry of length at most $6M = L$ in $\la a, b \ra$ or $\la a, b \ra$ fixes a point in $X$.  
\end{proof}

%%%%%%%%%%%%%%%%%%%%%%%%%%%%%%%%%%%%%%%%%%%%%%%%%%%%
%%%%%%%%%%%%%%%%%%%%%%%%%%%%%%%%%%%%%%%%%%%%%%%%%%%%%
    \section{Cubical groups generated with torsion} \label{sec:torsion2D}
%%%%%%%%%%%%%%%%%%%%%%%%%%%%%%%%%%%%%%%%%%%%%%%%%%%%%%
%%%%%%%%%%%%%%%%%%%%%%%%%%%%%%%%%%%%%%%%%%%%%%%%%%%%%
In this section, $X$ will be a CAT(0) \emph{square} complex. We consider a finitely generated group $G$ acting \emph{without global fixed point} on $X$. 
In \cite{KarSageev}, the authors restrict their attention to free actions.  Freeness implies that every element of $G$ acts as a hyperbolic isometry of $X$. 
If the action is not free, then a given generating set may consist partially or entirely of elements acting elliptically on $X$. We will use Proposition~\ref{prop:elliptic hyperbolic} to construct a hyperbolic isometry from elliptic isometries and then build on the proof of Main Theorem in \cite{KarSageev}.

Before proving \Cref{thm:2Dtorsion}, we recall the basic case of groups acting on trees.

\begin{lem}
	\label{lem:treeFreeSemigroup}
	Let $S$ be a finite collection of isometries of a simplicial tree $T$.  
	Either $\abrackets{S}$ stabilizes a point or a line, or $\abrackets{S}$ contains a 4-short free semigroup.
\end{lem}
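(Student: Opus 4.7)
The plan is to apply the classification of isometric actions on simplicial trees, combined with Helly's property for subtrees and Klein's ping-pong lemma. If $\abrackets{S}$ has a global fixed point or preserves a line in $T$ there is nothing to prove, so assume neither holds; the task is to exhibit two hyperbolic isometries in $\abrackets{S}$ of $S$-word length at most $4$ whose positive powers generate a free subsemigroup.

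The first step is to produce a hyperbolic $g \in \abrackets{S}$ of word length at most $2$. If some $s \in S$ is already hyperbolic, take $g := s$. Otherwise every $s \in S$ is elliptic with non-empty fixed subtree $\Fix(s) \subset T$. The absence of a global fixed point combined with Helly's property for the finitely many subtrees $\{\Fix(s)\}_{s \in S}$ of $T$ yields $a, b \in S$ with $\Fix(a) \cap \Fix(b) = \emptyset$, and a standard bridge computation shows $g := ab$ is then hyperbolic with translation length $2\,d(\Fix(a),\Fix(b))$. Let $\ell_g$ denote the axis of $g$. Because $\abrackets{S}$ stabilizes no line, some $s \in S$ satisfies $s\ell_g \neq \ell_g$, and the conjugate $h := s g s^{-1}$ is hyperbolic with axis $s\ell_g \neq \ell_g$ and of $S$-word length at most $4$.

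To finish I apply ping-pong to $g$ and $h$. Two distinct bi-infinite geodesics in a simplicial tree either (a) are disjoint, (b) meet in a finite segment, or (c) share exactly one ideal endpoint in $\partial T$. In cases (a) and (b), all four boundary points $\xi_g^\pm, \xi_h^\pm$ are pairwise distinct; choosing small pairwise disjoint neighborhoods in $\partial T$ verifies the hypotheses of Klein's ping-pong lemma, which yields a free subgroup $\abrackets{g, h}$ of rank two containing the desired free subsemigroup. In case (c), where $\ell_g$ and $\ell_h$ share an ideal end $\xi$, both $g$ and $h$ lie in $\stab(\xi)$ so that their attracting or repelling boundary points coincide, and direct ping-pong on $g, h$ fails. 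However, since $\abrackets{S}$ preserves no line, a short combination of $g$ and $h^{\pm 1}$ of $S$-word length at most $4$ produces a hyperbolic element whose axis does not contain $\xi$, reducing to case (a) or (b).

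The main obstacle is handling case (c): two hyperbolic axes sharing a single ideal endpoint force both isometries into a common end-stabilizer, obstructing the disjoint-endpoint configuration that Klein's lemma requires. The resolution exploits the structural assumption that no line is preserved, producing a short combination whose axis escapes the shared ideal endpoint and restoring the configuration needed to conclude via ping-pong, yielding two generators of $S$-word length at most $4$ for a free subsemigroup.
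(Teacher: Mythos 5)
Your overall strategy matches the paper's: produce a hyperbolic element $g$ of $S$-length at most $2$ (directly from $S$, or as a product $ab$ of elliptics with disjoint fixed subtrees via Helly and Serre), then pass to the conjugate $h = sgs^{-1}$ of $S$-length at most $4$ with $s\ell_g \neq \ell_g$, and finish by ping-pong. The paper itself does exactly these first two steps and then simply cites \cite[Proposition~10]{KarSageev} for the final step, where the statement is that the pair $(g^{\pm 1}, sg^{\pm 1}s^{-1})$ generates a free semigroup for an appropriate choice of signs.

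The gap is in your handling of case (c), where $\ell_g$ and $\ell_h$ share exactly one ideal end $\xi$. You assert that ``a short combination of $g$ and $h^{\pm 1}$ of $S$-word length at most $4$ produces a hyperbolic element whose axis does not contain $\xi$.'' This is not available: since $g$ has $S$-length up to $2$ and $h$ already has $S$-length up to $4$, the only nontrivial words in $g,h$ of $S$-length $\leq 4$ are $g^{\pm 1}, g^{\pm 2}, h^{\pm 1}$, and every one of these has axis equal to $\ell_g$ or $\ell_h$, both of which contain $\xi$. So the proposed reduction to cases (a)--(b) fails. The correct resolution is that semigroup ping-pong \emph{does} apply directly in case (c), provided you replace $g$ and $h$ by suitable inverses: choose $g' \in \{g, g^{-1}\}$ and $h' \in \{h, h^{-1}\}$ so that each translates \emph{away} from $\xi$. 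Let $v$ be the vertex at which $\ell_g$ and $\ell_h$ branch; then the attracting ends of $g'$ and $h'$ lie in distinct components $A$ and $B$ of $T \setminus \{v\}$, and a nearest-point projection argument shows $g'(A \cup B) \subset A$ and $h'(A \cup B) \subset B$, giving a free semigroup. This freedom to flip signs is precisely what the $\pm 1$ in the paper's citation encodes. A minor secondary point: in case (b) (axes overlapping along a bounded segment) your invocation of Klein's lemma to get a free \emph{group} is too strong without passing to powers, which would spoil the length bound; only a free \emph{semigroup} is provable directly (again after possibly inverting one generator so both translate toward the same branch vertex), and that is all the lemma requires.
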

\begin{proof}
	Suppose $S$ contains a hyperbolic isometry $g$ of $T$ with axis $\ell$.  If $S \subset \stab(\ell)$ then we are done.  
	Otherwise, there exists $s \in S$ such that the axis $s\ell \neq \ell$.  By \cite[Proposition~10]{KarSageev}, $g^{\pm 1}$ and $sg^{\pm 1}s^{-1}$ generate a free semigroup.
	If $S$ consists only of elliptic isometries of $T$, then we may assume there exists elements $a, b \in S$ that do not fix a common point or else we are done.  By \cite[I~Proposition~26]{Serre}, the element $ab$ is a hyperbolic isometry of $T$.  We may repeat the argument above with $g = ab$.
\end{proof}

We use \Cref{prop:elliptic hyperbolic} to generalize \cite[Proposition~15]{KarSageev} to actions that are not free.

\begin{prop}\label{prop:collection of isometries}
Let $S$ be a finite collection of isometries of $X$, and suppose $S$ contains at least one hyperbolic isometry, and at least one elliptic isometry. Then either
%Let $S = \{s_1, \ldots, s_n, e_1, \ldots, e_m\}$, with $n,m \geq 1$, be a finite collection of isometries of $X$ where each $s_i$ is a hyperbolic isometry and each $e_j$ is an elliptic isometry of $X$. Then either
\begin{enumerate}
	\item $\abrackets{S}$ contains a 50-short free semigroup, or 
	\item $\abrackets{S}$ stabilizes a flat or line in $X$. 
\end{enumerate}
\end{prop}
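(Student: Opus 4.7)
Since $S$ contains a hyperbolic isometry by hypothesis, fix a hyperbolic $g \in S$ with CAT(0) axis $\ell_g$. The strategy is to reduce to a pair of hyperbolic isometries with distinct axes and apply Proposition \ref{prop:KS 2 element dichotomy}. I would split into cases based on how elements of $S$ interact with $\ell_g$ setwise.

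First, if every $s \in S$ satisfies $s\ell_g = \ell_g$, then $\la S \ra$ stabilizes the line $\ell_g$ and conclusion (2) holds. The presence of an elliptic element in $S$ does not obstruct this, as elliptics can still preserve $\ell_g$ (for instance by fixing a point on it or acting as a reflection). Otherwise, choose $s \in S$ with $s\ell_g \neq \ell_g$. Then $sgs^{-1}$ is hyperbolic with axis $s\ell_g \neq \ell_g$, and both $g$ and $sgs^{-1}$ have $S$-length at most $3$. Applying Proposition \ref{prop:KS 2 element dichotomy} to the hyperbolic pair $(g, sgs^{-1})$ yields either a $10$-short free semigroup in $\la g, sgs^{-1}\ra$ --- which lifts to a $30$-short free semigroup in $\la S \ra$, giving conclusion (1) --- or a Euclidean subcomplex $E \subseteq X$ preserved by $\la g, sgs^{-1}\ra$ and containing both axes. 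Since $\dim X = 2$, $E$ is a flat or (if the two axes are parallel) a line or strip.

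To extend stabilization from the two-generator subgroup to the full group $\la S \ra$, I would iterate over $t \in S$: if $t$ preserves $E$, it joins the stabilizer; otherwise I apply Proposition \ref{prop:KS 2 element dichotomy} again to a hyperbolic pair derived from $g$ and $t$, such as $(g, tgt^{-1})$ or, if needed, a doubly conjugated pair like $(g, s t g t^{-1} s^{-1})$ whose second element has $S$-length $5$. Each such application either produces a free semigroup of $S$-length at most $50 = 5 \cdot 10$, yielding conclusion (1), or produces another $\la g, \cdot \ra$-invariant Euclidean subcomplex. Using the Helly property for convex subcomplexes together with the rigidity of flats in a $2$-dimensional CAT(0) cube complex --- namely that a flat is essentially determined by any pair of non-parallel axes it contains --- I would argue that all these subcomplexes must coincide with $E$, so $\la S \ra$ stabilizes $E$, a flat or line, and conclusion (2) holds.

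The principal obstacle lies in the extension phase: controlling word lengths through repeated conjugations while verifying that the Euclidean subcomplexes from different pairs merge into a single flat or line stabilized by $\la S \ra$. The bound $50 = 5 \cdot 10$ strongly suggests that the proof needs to invoke double conjugates of $g$ by pairs of elements of $S$, which have $S$-length at most $5$; the factor $10$ then comes from Proposition \ref{prop:KS 2 element dichotomy}. The geometric input ensuring that competing candidate subcomplexes cannot simultaneously persist is the low-dimensional rigidity of flats, which either forces the subcomplexes to agree or yields transverse hyperbolic isometries whose pair produces a short free semigroup via Proposition \ref{prop:KS 2 element dichotomy}.
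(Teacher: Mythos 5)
Your approach is genuinely different from the paper's, and it has a real gap in the extension phase that you yourself flag as the ``principal obstacle.''

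The paper's key structural move is to reduce to a finite \emph{collection} of hyperbolic isometries rather than working pair-by-pair. Starting from $S = S_0 \sqcup S_{\text{elliptic}}$, it conjugates the hyperbolics by the elliptics twice to form nested sets $S_0 \subset S_1 \subset S_2$, all consisting of hyperbolic isometries of $S$-length at most 5. It then applies Kar--Sageev's result directly to the whole collection $S_2$ (not to a single pair), obtaining a minimal $S_2$-invariant Euclidean subspace $F_2$, and then nests minimal invariant subspaces $F_0 \subseteq F_1 \subseteq F_2$ for $S_0, S_1, S_2$. Because the dimensions are constrained to $\{1,2\}$, this nesting collapses to two cases, each of which is handled with a careful hyperplane-skewering argument: if some elliptic $e$ moves $E_1 := \Hull(F_1)$, then $eE_1$ is shown to be \emph{parallel} to $E_1$, which forces $\dim E_1 = 1$, and the group then acts on the parallel subcomplex $E_1 \times T$, where a tree argument (\Cref{lem:treeFreeSemigroup}) finishes. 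By contrast, you only invoke the two-element version (\Cref{prop:KS 2 element dichotomy}), and so you are left to argue that the Euclidean subcomplexes produced by the various pairs $(g, sgs^{-1})$, $(g, tgt^{-1})$, etc.\ all coincide.

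That gluing step is where your argument breaks down. Your appeal to ``the Helly property together with the rigidity of flats'' is not a theorem you can cite, and the claim is not obviously true as stated: \Cref{prop:KS 2 element dichotomy} only produces \emph{some} invariant Euclidean subcomplex for each pair, with no guarantee that it contains the axis of $g$ or is canonically determined, and two such subcomplexes might be distinct without any of your conjugated pairs generating a free semigroup. Moreover, your iteration does not anticipate the case the paper has to treat explicitly: an elliptic $e$ may carry the invariant Euclidean subcomplex to a \emph{parallel} copy $eE \neq E$, in which case $\langle S\rangle$ does not stabilize $E$ at all and one must pass to the larger parallel subcomplex $E \times T$ and run a tree argument. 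Without a version of Kar--Sageev for arbitrary finite collections of hyperbolics (which is what the paper uses) and without the parallelism/dimension argument, the merging step cannot be completed as sketched. Your identification of the $50 = 5 \cdot 10$ bound via doubly-conjugated elements of $S$-length $\leq 5$ does match the paper's bookkeeping, but the underlying mechanism --- a single minimal invariant subspace for a length-5 collection, rather than a family of pair-invariant subcomplexes to be reconciled --- is structurally different and is what makes the argument close.
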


\begin{proof}
Let $S = S_0 \sqcup S_{elliptic}$ where $S_0$ is the set of hyperbolic isometries, 
and $S_{elliptic}$ is the set of elliptic isometries.
Let $S_1 = S_0 \cup \{ese^{-1}\mid s\in S_0, e\in S_{elliptic}\}$ and $S_2 = S_1 \cup \{ese^{-1} \mid s\in S_1, e\in S_{elliptic}\}$. 
Each of $S_0, S_1,$ and $S_2$ is a finite collection of hyperbolic isometries of $X$ with $S$-length at most 5.
Assume no pair of word of length at most $50$ in $S$ generates a free semigroup. 
In this case, we will find a Euclidean subcomplex that is stabilized by $S$.

Words in $S_i$ for $i = 0,1,2$ have $S$-length at most 5.  By assumption there does not exist a 50-short free semigroup with respect to $S$, so there is no 10-short free semigroup with respect to any $S_i$. By \cite[Theorem 1]{KarSageev}, there exists a minimal $S_2$-invariant subspace $F_2$, which is an isometrically embedded Euclidean plane or line (with respect to the CAT(0) metric). Since $S_1\subset S_2$, $F_2$ is also stabilized by $S_1$. Let $F_1$ be a minimal subspace of $F_2$ stabilized by $S_1$. Similarly, let $F_0$ be a minimal subspace of $F_1$ stabilized by $S_0$. We have $F_0\subseteq F_1 \subseteq F_2$. Since the three Euclidean spaces have dimensions $1$ or $2$, either $F_0 = F_1$, or $F_0\subsetneq F_1 = F_2$. For $i=1,2,3$, we denote the subcomplex $\Hull (F_i)$ by $E_i$.

First suppose that $F_0=F_1$. Then we also have that their cubical convex hulls $E_0=E_1$ are equal.
If $S_{elliptic}\subset \stab(E_1)$, we are done. 
Suppose there exists $e\in S_{elliptic}$ such that $eE_1\neq E_1$.  
If the subcomplexes $eE_1$ and $E_1$ are not parallel, 
then there exists a hyperplane $\h \in \Hyp (E_1)$ such that $e\h \notin \Hyp(E_1)$. 
Since $\h$ intersects $E_1$, which is a minimal Euclidean complex stabilized by $S_1$, 
there exists $g\in \langle S_1\rangle$ such that $\h \in \sk(g)$. 
It follows that $e\h\in e\sk(g) = \sk(ege^{-1})$. Since $ege^{-1}\in \langle S_1\rangle$
and $S_1$ stabilizes $E_1$, $e\h$ must intersect $E_1$. 
Thus $eE_1$ and $E_1$ must be parallel. 
Since $X$ is $2$-dimensional we get $\dim E_1 = \dim eE_1 = 1$.
Since $E_1$ is a subcomplex it must be a combinatorial line. 
The parallel subcomplex of $g$ is isometric to $E_1\times T$ and is stabilized by $S$ by repeating the above argument for every element $e \in S_{elliptic}$. 
Moreover, $\abrackets{S}$ preserves the product structure of $E_1 \times T$.
By \Cref{lem:treeFreeSemigroup}, $\abrackets{S}$ stabilizes a point or line $\ell$ in its action on $T$.  
Thus, $\abrackets{S}$ stabilizes the line $E_1$ or flat $E_1 \times \ell$.

Now consider the case where $F_0\subsetneq F_1 = F_2$. We must have $\dim F_0 = 1$ and $\dim F_1 =2$. 
Since $X$ is $2$-dimensional, any Euclidean 2-plane is equal to its cubical convex hull. Hence, we have $F_1 = E_1 = F_2 = E_2$.
We repeat the argument from the previous paragraph for $E_1, E_2$ in the place of $E_0, E_1$. 
We conclude that $S_{elliptic} \subset \stab(E_2)$, as otherwise we get a contradiction with the fact that $\dim E_1 = 2$.
\end{proof}

We are now ready to prove \Cref{thm:2Dtorsion} and \Cref{cor:Proper}.

\ThmA*

\begin{proof}
Let $S$ be a finite generating set for $G$. % where each $s_i$ is a hyperbolic isometry and each $e_j$ is an elliptic isometry of $X$. 
If $S$ contains no hyperbolic isometries then by Proposition~\ref{prop:elliptic hyperbolic} we may replace $S$
with a new generating set containing a hyperbolic isometry whose $S$-length is $\leq 12$. %in the $e_i$'s.
Thus without loss of generality we may assume that $S$ contains at least one hyperbolic isometry.
%$n \geq 1$. If $m = 0$, then we reduce to the Main Theorem of \cite{KarSageev}. So suppose $m \geq 1$. 
If $S$ contains no elliptic isometries, then we reduce to the Main Theorem of \cite{KarSageev}.
Suppose that $S$ contains at least one elliptic isometry. 
By Proposition~\ref{prop:collection of isometries}, either there exist a pair of words of length at most 50 in $S$ 
that freely generate a free semigroup or $S$ stabilizes a flat or line in $X$.
When $G$ has uniform exponential growth we get $w(G) \geq \sqrt[600]{2}$. 
\end{proof}

For proper actions on CAT(0) square complexes we can upgrade stabilizing a flat or line to being virtually abelian.

\CorProper
\begin{proof}%[Proof of Cor~\ref{cor:Proper}]
Suppose $G$ does not have uniform exponential growth with $w(G) \geq \sqrt[600]{2}$.
By \Cref{thm:2Dtorsion}, $G$ stabilizes a flat or line $F$. 
Consider the image $\bar G$ of $G$ in $\Isom(F)$. 
Since the action of $G$ on $X$ is proper, the action on $F$ is also proper, and so $\bar G$ is a discrete subgroup of $\Isom(F)$. 
%%SELF-NOTE: this is an exercise in topology. 
Also, the properness of the action implies that $K =\ker (G\to \bar G)$ is finite. 
By Bierberbach's Theorem (\Cref{thm:Bieberbach}), the quotient $\bar G$ is virtually abelian. Thus, $G$ is finite-by-(virtually abelian) and hence virtually abelian.   For completeness we include a proof of this in \Cref{lem:finite-by-abelian} below.
\end{proof}

\begin{lem}
	\label{lem:finite-by-abelian} 
	Let $G$ be a finitely generated finite-by-(virtually abelian) group, i.e.\ there exists a finite normal subgroup $K<G$ such that $G/K$ is virtually abelian. Then $G$ is virtually abelian.
\end{lem}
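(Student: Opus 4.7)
The plan is to reduce to a finite-index subgroup in which $K$ is central, and then exploit the fact that a finitely generated group with a finite central commutator subgroup is virtually abelian.

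First, since $G/K$ is virtually abelian, choose a finite index subgroup $H \leq G$ containing $K$ such that $H/K$ is abelian; note that $H$ is finitely generated and finite index in $G$, so it suffices to show $H$ is virtually abelian. Next, consider the conjugation action of $H$ on the finite normal subgroup $K$, which gives a homomorphism $H \to \Aut(K)$. Since $\Aut(K)$ is finite, the kernel $C := C_H(K)$ has finite index in $H$, hence finite index in $G$. We have reduced to showing $C$ is virtually abelian, where $K \leq Z(C)$ and $C/K$ is abelian (being a subgroup of $H/K$).

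Because $C/K$ is abelian, we have $[C,C] \leq K$, so $[C,C]$ is finite and, being contained in $K \leq Z(C)$, it is central in $C$. The key point is now that centrality of $[C,C]$ makes the commutator bracket bilinear in each variable: for every fixed $c \in C$, the map $\varphi_c \colon C \to [C,C]$ given by $\varphi_c(x) = [c,x]$ is a group homomorphism, with kernel $C_C(c)$. Hence $[C : C_C(c)] \leq |[C,C]| < \infty$ for every $c \in C$.

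Finally, since $C$ is finitely generated (being a finite index subgroup of the finitely generated group $G$), choose a finite generating set $c_1, \dots, c_n$ of $C$. Then
\[
Z(C) \;=\; \bigcap_{i=1}^{n} C_C(c_i)
\]
is the intersection of finitely many finite index subgroups, hence has finite index in $C$. Since $Z(C)$ is abelian, $C$ is virtually abelian, and therefore so is $G$. The only mildly delicate step is the bilinearity observation; the rest is a straightforward chase through finite index subgroups.
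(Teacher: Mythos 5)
Your argument is correct in substance but takes a genuinely different route from the paper. The paper also reduces to a central extension $1 \to K' \to H' \to A' \to 1$ with $A'$ finitely generated free abelian, but then proceeds by an explicit construction: it lifts a basis of $A'$ to $H'$, uses finiteness of $K'$ and pigeonhole to raise each lift to a power commuting with the others, and checks the resulting free abelian subgroup has finite index. You instead invoke the general principle (a converse of Schur's theorem for finitely generated groups) that a finitely generated group with finite central commutator subgroup has finite-index center: centrality of $[C,C]$ makes $x \mapsto [c,x]$ a homomorphism $C \to [C,C]$ with kernel $C_C(c)$, so $[C : C_C(c_i)] \leq |[C,C]| < \infty$ for each generator $c_i$, and $Z(C) = \bigcap_i C_C(c_i)$ is then a finite intersection of finite-index subgroups. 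This is cleaner and avoids the paper's hands-on power-raising.

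One small imprecision needs fixing. You assert $K \leq Z(C)$ where $C = C_H(K)$, but $K \leq C$ holds only if $K$ is abelian, which is not assumed; likewise $C/K$ is not literally a quotient of $C$. The repair is immediate: work with $K \cap C$. Since the image of $C$ in the abelian group $H/K$ is abelian, $[C,C] \leq K$, and trivially $[C,C] \leq C$, so $[C,C] \leq K \cap C$. Every element of $C$ centralizes $K$, hence centralizes $K \cap C$, so $K \cap C \leq Z(C)$. Thus $[C,C]$ is finite and central in $C$, and the bilinearity and finite-index-center argument proceed verbatim.
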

\begin{proof} 
	Let $A$ be a maximal rank free abelian subgroup of $G/K$ and let $H$ be the preimage of $A$ in $G$.  Then $H$ is a finite index subgroup of $G$ and fits into the following short exact sequence:
	\[
		1\to K\to H\to A\to 1.
	\] 
	Let $\pi:H\to A$ denote the projection.
The group $H$ acts on $K$ by conjugation, i.e. there is a homomorphism $H\to \text{Aut} (K)$, so we can pass to a further finite index subgroup $H'$ whose action on $K$ is trivial, i.e. we have the following central extension
\[
1\to K'\to H'\to A'\to 1
\] 
where $K' = K\cap H'$ and $A' = \pi(H')<A$. Let $\{a_1,\dots, a_n\}$ be a minimal set of generators of $A'$, and let $\{h_1,\dots, h_n\}$ be a set of elements of $H$ such that $\pi(h_i) = a_i$ for all $1\leq i\leq n$. We will show that $\langle h_1^{m_1}, \dots, h_n^{m_n}\rangle = \Z^n$ for some $m_1,\dots, m_n\in \N$. This follows from the following claim: for any $g,h\in H'$ with $[\pi(g), \pi(h)]=1$, there exists $m\in \N$ such that $[g, h^m] = 1$. To see that the claim is true, note that for every $m\in\N$ we have $[g,h^m]\in K'$. Since $K'$ has only finitely many elements there exists distinct $m_1,m_2$ such that $gh^{m_1}g^{-1}h^{-m_1} = gh^{m_2}g^{-1}h^{-m_2}$, which implies that $h^{m_1-m_2}g^{-1} = g^{-1}h^{m_1-m_2}$. 

It remains to show that $H'':=\langle h_1^{m_1}, \dots, h_n^{m_n}\rangle$ has finite index in $H'$. Note that $A'':=\langle a_1^{m_1}, \dots, a_n^{m_n}\rangle$  has finite index in $A'$. The finite index subgroup $\phi^{-1}(A'')$ of $H'$ is generated by $\{ h_1^{m_1}, \dots, h_n^{m_n}\}\cup K$, and so is isomorphic to $K\times H''$.
\end{proof}

In general, stabilizing a flat is far from sufficient to show that a group is virtually abelian.  
The following example was brought to our attention by Talia Fern\'{o}s. 

\begin{ex}
	Let $G = \Z^2 \oplus R$ where $R$ is the Grigorchuk group, which has intermediate growth.  $R$ acts faithfully on a tree $T$, with a single global fixed point $v$, so $G$ acts faithfully on the universal cover of the wedge sum a torus and $T$ along the vertex $v$.  The torus lifts to a 2-flat stabilized by $G$, but $G$ does not act faithfully on this flat.  Moreover, $G$ is neither virtually abelian nor contains a free semigroup because it has intermediate growth.
\end{ex}

Nevertheless, by studying the interactions between vertex stabilizers it is sometimes possible to show that certain groups acting improperly on a CAT(0) square complex may still satisfy the conclusion of \Cref{cor:Proper}.

%%%%%%%%%%%%%%%%%%%%%%%%%%%%%%%%%%%%%%%%%%%%%%%%%%%
%%%%%%%%%%%%%%%%%%%%%%%%%%%%%%%%%%%%%%%%%%%%%%%%%%%%
    \section{Improper actions and locally uniform exponential growth}\label{sec:Applications}
%%%%%%%%%%%%%%%%%%%%%%%%%%%%%%%%%%%%%%%%%%%%%%%%%%%
%%%%%%%%%%%%%%%%%%%%%%%%%%%%%%%%%%%%%%%%%%%%%%%%%%%

It is not known whether all acylindrically hyperbolic groups have uniform exponential growth. 
However, they may contain finitely generated exponentially growing subgroups without uniform exponential growth.  For example, this is the case for the free product of Wilson's group with itself.  
In this section, we will show that by understanding vertex stabilizers it is possible to use Theorem~\ref{thm:2Dtorsion} to prove locally uniform exponential growth results for certain acylindrically hyperbolic groups that also act on cube complexes.  In each case, we make use of the following.

\improperSquareCplx

\begin{proof}
	If $H$ acts without global fixed point on $X$, then by Theorem~\ref{thm:2Dtorsion}, either it contains a uniformly short free semigroup, 
	or it stabilizes a flat or line.
	If $H$ has a fixed point in $X$, then it is a finitely generated subgroup of one of the vertex groups, so either is virtually abelian or has uniform exponential growth bounded by $w_0$.  
\end{proof}

%===================================================
\subsection{Higman group} %has locally uniform exponential growth}
%===================================================
The \emph{Higman group} \cite{Higman}, $H$, is given by the following presentation.  
$$ H := \abrackets{a_i \mid a_i(a_{i + 1})a_i^{-1} = a_{i +1}^2  }_{i \in \Z/4\Z}  $$
This presentation gives a decomposition of $H$ as a square of groups with the following local groups. 
Each vertex group is a copy of $BS(1,2)$.
Each edge group is a copy of $\Z$.
Each 2-cell group is trivial.  
This decomposition gives a cocompact action of $H$ on a CAT(0) square complex $X$, whose vertex stabilizers are the groups mentioned above.  
Martin used this structure to show that certain generalizations of the Higman group act acylindrically hyperbolically \cite[Theorem~B]{Martin} on CAT(0) square complexes. The Higman group itself is acylindrically hyperbolic coming from its structure as a free product with amalgamation and \cite{MinasyanOsin}.

\begin{thm}
	\label{cor:Higman}
	Let $G$ be any finitely generated subgroup of the Higman group $H$. Then either $G$ is cyclic or $G$ has uniform exponential growth with $w(G) \geq \sqrt[600]{2}$ . 
\end{thm}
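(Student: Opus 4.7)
The approach is to apply \Cref{cor:improper2D} to the cocompact action of the Higman group $H$ on the CAT(0) square complex $X$ coming from its square-of-groups decomposition. First I verify the vertex-stabilizer hypothesis. Each vertex stabilizer is a copy of $BS(1,2)$, which is finitely generated and solvable, so Osin's theorem on uniform exponential growth for solvable groups provides a constant $w_{BS}>1$ with the property that every finitely generated subgroup of $BS(1,2)$ either has $w\geq w_{BS}$ or is virtually nilpotent. Since $BS(1,2)\cong\Z[1/2]\rtimes\Z$ contains no noncyclic finitely generated abelian subgroup, its finitely generated virtually nilpotent subgroups are all infinite cyclic. Thus the hypothesis of \Cref{cor:improper2D} is satisfied with $w_0 := \min\{\sqrt[600]{2}, w_{BS}\}$. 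Applied to any finitely generated $G\leq H$, the corollary returns one of three outcomes: $w(G)\geq w_0$, $G$ is virtually abelian, or $G$ stabilizes a flat or line in $X$.

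If the first outcome holds, one also needs to check (via an explicit bound on $BS(1,2)$) that $w_{BS}\geq\sqrt[600]{2}$; otherwise the stated growth bound follows immediately. For the second and third outcomes I would argue that $G$ must be cyclic. Torsion-freeness of $H$ is a consequence of the developability of its square of groups together with the torsion-freeness of the local groups, as every torsion element must fix a point of $X$ and hence sit in a vertex stabilizer. For the flat-stabilizing branch, if $G$ stabilizes a $2$-flat $F\subset X$, the kernel of the induced homomorphism $G\to\Isom(F)$ stabilizes every $2$-cell of $X$ contained in $F$, so it must be trivial because $2$-cell stabilizers in $X$ are trivial; hence $G$ embeds in $\Isom(F)$ and is virtually $\Z^2$. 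For the line-stabilizing branch, an analogous argument (now using that edge stabilizers are cyclic and $2$-cell stabilizers are trivial) shows that $G$ is cyclic-by-(virtually cyclic) with image in $\Isom(\mathbb{R})$, and being torsion-free it must be infinite cyclic. Both remaining branches therefore reduce to ruling out a $\Z^2$ subgroup in $H$.

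The main obstacle is showing that $H$ has no $\Z^2$ subgroup. The cleanest route is to use that $H$ is acylindrically hyperbolic, which follows from its nontrivial amalgamated free product structure via the Minasyan--Osin criterion. In an acylindrically hyperbolic group every abelian subgroup is either elliptic under the acylindrical action -- hence conjugate into a vertex group of the splitting and cyclic by the analysis of $BS(1,2)$ above -- or contained in the maximal virtually cyclic subgroup $E(g)$ attached to some loxodromic element $g$. Either way no $\Z^2$ can embed in $H$, which completes cases (2) and (3) of \Cref{cor:improper2D} and finishes the proof.
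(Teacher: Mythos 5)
Your overall strategy of applying \Cref{cor:improper2D} to the action of $H$ on its square complex $X$ is the same as the paper's, but two of the inputs you supply do not hold up.

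First, for the vertex-stabilizer hypothesis you invoke Osin's theorem on solvable groups, which yields some constant $w_{BS}>1$ with no control on its size; as you note yourself, one would still need $w_{BS}\geq\sqrt[600]{2}$ to recover the stated bound, and that is left unaddressed. The paper supplies this by proving \Cref{lem:BaumslagSolitar} directly: the action of $BS(1,2)$ on its Bass--Serre tree, combined with \Cref{lem:treeFreeSemigroup}, gives the explicit bound $\sqrt[4]{2}$ for finitely generated non-cyclic subgroups, so the hypothesis of \Cref{cor:improper2D} is met with a concrete $w_0$.

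Second, and more seriously, your derivation that $H$ contains no $\Z^2$ has a genuine gap. You argue that an abelian subgroup that is elliptic for the Bass--Serre tree action of Higman's amalgam is conjugate into a vertex group of that splitting ``and cyclic by the analysis of $BS(1,2)$.'' But the vertex groups of the amalgamated free product $H_{1,2,3} *_{\langle a_1,a_3\rangle} H_{3,4,1}$ are the rank-three subgroups $\langle a_i,a_j,a_k\rangle$, each itself an amalgam of two copies of $BS(1,2)$ over $\Z$ --- they are not copies of $BS(1,2)$. You have conflated these with the $BS(1,2)$ vertex stabilizers of the square-of-groups action on $X$, which belong to a different decomposition of $H$. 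Ruling out $\Z^2$ in those rank-three amalgams would require a further recursion or a separate argument; in effect you would be reproving the fact you are trying to invoke. The paper sidesteps all of this by citing \cite[Proposition~4.15]{MartinHigman} for the absence of $\Z^2$. (A smaller discrepancy: in the line-stabilizing case the paper uses \cite[Lemma~2.1]{MartinHigman}, that adjacent edge stabilizers in $X$ intersect trivially, to show the kernel of $G\to\Isom(E)$ is actually trivial, not merely cyclic; your weaker conclusion of cyclic-by-(virtually cyclic) still depends on the no-$\Z^2$ fact, so the logical load on that missing step is even heavier in your version.)
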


To understand exponential growth in the Higman group, we first show uniform exponential growth of finitely generated subgroups of Baumslag-Solitar groups.  Uniform exponential growth of solvable Baumslag-Solitar groups follows from work of Bucher and de la Harpe \cite{BucherHarpe}, however they do not address subgroups.
	
\begin{lem}[Baumslag-Solitar groups]
	\label{lem:BaumslagSolitar}
	Any finitely generated subgroup of a Baumslag-Solitar group $BS(1,m)$ where $m \neq \pm 1$, is either cyclic or has uniform exponential growth bounded by $\sqrt[4]{2}$.
\end{lem}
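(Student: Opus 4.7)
The plan is to have $G$ act on the Bass-Serre tree $T$ associated with the HNN decomposition of $BS(1,m)$ with vertex group $\langle a\rangle$, stable letter $t$, and associated subgroups identified via $a \mapsto a^m$. Every vertex stabilizer of this action is a conjugate of $\langle a\rangle \cong \Z$, and hence cyclic. The key structural claim I would verify first is that the setwise stabilizer of any line $\ell \subset T$ is also cyclic.

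For the line stabilizer claim, I would first argue that the pointwise stabilizer $\bigcap_{v\in\ell}\mathrm{Stab}(v)$ is trivial. At each vertex of $T$ there is one incoming $t$-edge (to the unique ``parent'' vertex) and $|m|$ outgoing $t$-edges (to the $|m|$ children), so every line $\ell$ contains a half-line in at least one direction along which every successive edge is outgoing. Along such a half-line the vertex stabilizers form a strictly decreasing chain isomorphic to $\langle a\rangle \supsetneq \langle a^m\rangle \supsetneq \langle a^{m^2}\rangle \supsetneq \cdots$ with trivial intersection. Moreover, the defining relation $tat^{-1} = a^m$ rules out inversions of edges of $T$, since any hypothetical inversion would force $t^2 \in \langle a\rangle$. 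Hence $\mathrm{Stab}(\ell)$ embeds in the translation subgroup of $\mathrm{Isom}(\ell)$, and so is cyclic.

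Given a finite generating set $S$ of $G$, I would apply \Cref{lem:treeFreeSemigroup} to $S$ viewed as a collection of isometries of $T$. Either $G$ stabilizes a vertex of $T$ and so lies in a cyclic vertex stabilizer; or $G$ stabilizes a line of $T$ and so lies in the corresponding cyclic line stabilizer; or $G$ contains a $4$-short free semigroup with respect to $S$, giving $w(G,S) \geq \sqrt[4]{2}$. Since whether $G$ is cyclic is an intrinsic property of $G$ independent of $S$, the third alternative occurs for every generating set whenever $G$ is not cyclic, yielding the uniform lower bound $w(G) \geq \sqrt[4]{2}$.

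The main obstacle is the line stabilizer analysis, especially the triviality of the pointwise stabilizer. This rests on the asymmetry between incoming and outgoing $t$-edges at each vertex of $T$, which produces the descending chain of conjugates of $\langle a\rangle$ needed to trivialize the pointwise stabilizer along at least one half-line of any line in $T$.
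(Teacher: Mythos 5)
Your approach mirrors the paper's: act on the Bass--Serre tree $T$, apply \Cref{lem:treeFreeSemigroup}, note that vertex stabilizers are conjugates of $\langle a\rangle$, and then show line stabilizers are cyclic. Your argument for triviality of the pointwise stabilizer of a line (locate a half-line along which every edge is ``outgoing,'' giving a strictly descending chain of conjugates of $\langle a\rangle$) is actually cleaner and more explicit than the paper's, which phrases this in terms of the normal form $a^p t^q$ and asserts that segments of length $n$ are pointwise-fixed only by elements in $\langle a^{m^n}\rangle$ without isolating the needed descending direction.

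There is, however, a genuine gap in the step ``Hence $\Stab(\ell)$ embeds in the translation subgroup of $\Isom(\ell)$.'' Ruling out edge inversions only excludes reflections of $\ell$ whose fixed point is an edge midpoint; it does not exclude reflections fixing a \emph{vertex} $v\in\ell$ that interchange the two $\ell$-edges at $v$. Since you need $\Stab(\ell)$ to be cyclic rather than possibly infinite dihedral, this case must be addressed. It can be closed quickly: if $h\in\Stab(\ell)$ acts as such a reflection, then $h^2$ stabilizes $\ell$ and fixes a segment of $\ell$ pointwise, hence $h^2=1$ by your triviality of the pointwise stabilizer; but $h\in\Stab(v)$, which is a conjugate of $\langle a\rangle\cong\Z$ and so torsion-free, forcing $h=1$, a contradiction. (Alternatively, when one of the two $\ell$-edges at $v$ is the unique ancestor edge, no element of $\Stab(v)$ can send it to a descendant edge, so no such reflection can exist there.) With this patch, the embedding into the translation subgroup and the resulting cyclicity of $\Stab(\ell)$ are justified, and the rest of your argument is correct.
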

	
\begin{proof}
	Let $S$ be any finite collection of elements of
	the Baumslag-Solitar group with presentation $\abrackets{a,t \mid tat^{-1} = a^m}$. Let $T$ be the Bass-Serre tree for $BS(1,m)$ with $\Z$ vertex and edge groups.  This tree can be obtained from the Cayley complex by collapsing in the $a$-direction.
	
	By \Cref{lem:treeFreeSemigroup}, either $G = \abrackets{S}$ contains a 4-short free semigroup or stabilizes a point or line in $T$.  
	If $G$ stabilizes a point then $G$ is cyclic, so assume that $G$ stabilizes a line $\ell$.  It suffices to show that $\stab(\ell)$ is cyclic.
	
	Every element of $BS(1,m)$ can be written in the form $h = a^pt^q$ because $ta = a^mt$.  We claim elements of the form $a^p$ cannot stabilize $\ell$.  Indeed, if $a^p$ stabilizes $\ell$ then it would fix the line pointwise because such elements fix a vertex in $T$.  Vertex stabilizers are conjugates of $a$, so segments of length $n$ can only be fixed pointwise by elements that are powers of $a^{mn}$.  Taking $n$ larger than $p$ gives a contradiction.
	If $q \neq 0$ then $h$ is also a hyperbolic isometry of $T$.  The only hyperbolic isometries that will stabilize the axis of $g$ are 
	roots and powers of $g$.  The axes of any other hyperbolic isometry $h$ will diverge from $\ell$ in the tree $T$, so $h$ cannot stabilize $\ell$.  It follows that $\stab(\ell)$ is cyclic.
\end{proof}

With this, we are ready to address the Higman group.  

\begin{proof}[Proof of \Cref{cor:Higman}]
	By \Cref{cor:improper2D} and \Cref{lem:BaumslagSolitar}, any finitely generated subgroup $G \leq H$ of the Higman group either has $w(G) \geq \sqrt[600]{2}$, or is virtually abelian,
	or stabilizes a flat or line in $X$.
	Let $E$ be the line or flat in $X$ stabilized by $G$.  We have homomorphism $\pi:G \to \Isom(E)$ where the image $\bar{G}$ is virtually abelian.  
	The kernel is contained in $\bigcap_{p \in E} \stab(p)$, so it is cyclic. 
	If $\dim(E) = 2$ then $\ker(\pi)$ is trivial because the 2-cell groups are trivial.  If $\dim(E) = 1$ then $E$ is contained in the 1-skeleton of $X$.  Stabilizers of adjacent edges have trivial intersection by \cite[Lemma~2.1]{MartinHigman}, so we again have $\ker(\pi)$ is trivial. 
	Therefore, $G \cong \bar{G}$, and moreover $G$ is cyclic, since Higman group is torsion-free and does not contain $\Z^2$ as a subgroup \cite[Proposition~4.15]{MartinHigman}.
\end{proof}
	
%====================================================	
\subsection{Triangle-free Artin groups}
%====================================================
Artin groups generalize the braid group. They admit presentations corresponding to finite labeled graphs where each label $m$ is at least two.   
Vertices correspond to generators and an edge labeled by $m$ joining vertices $a$ and $b$ corresponds to the relation
$$ \underbrace{aba \cdots}_{m} = \underbrace{bab \cdots}_{m} .$$
\emph{Triangle-free} Artin groups are those whose defining graphs have girth $\geq 4$.
\emph{Spherical} Artin groups are those whose quotient Coxeter group is finite.  This quotient is obtained by imposing that all generators have order 2. An Artin group is \emph{FC-type} when every clique in the defining graph is associated to a spherical Artin subgroup. An Artin group is \emph{$2$-dimensional} if every spherical Artin subgroup has rank at most $2$. It is easy to see that an Artin group is triangle-free if and only if it is $2$-dimensional FC-type.

Many subclasses of Artin groups exhibit properties of nonpositive curvature and have attracted much attention in recent years.
Indeed, FC-type Artin groups are acylindrically hyperbolic by Chatterji and Martin \cite[Theorem~1.2]{ChatterjiMartin}. In recent work, Martin and Przytycki exploit an improper action of FC-type Artin groups on CAT(0) cube complexes in order to prove the strong Tits alternative \cite{MartinPrzytycki}. 

Charney and Davis showed an Artin group $A$ is FC-type if and only if its Deligne complex $\Deligne$ is a CAT(0) cube complex.  Moreover, they showed that $\Deligne$ is a $K(\pi,1)$ space for $A$ so it has the same cohomological dimension as the Artin group \cite[Theorem~4.3.5]{CharneyDavisCAT0} (see also \cite{MartinPrzytycki}).
In particular, the Deligne complex of triangle-free Artin groups is a CAT(0) square complex. 
The action of any FC-type Artin group $A$ on its Deligne complex has vertex stabilizers that are conjugates of standard parabolic subgroups, which correspond to subgraphs of the defining graph of $A$.

\begin{thm}\label{thm:FCtypeArtin}
	Let $G$ be any finitely generated subgroup of a triangle-free Artin group $A$.  Either $G$ is virtually abelian or it has uniform exponential growth with $w(G) \geq \sqrt[600]{2}$.  
\end{thm}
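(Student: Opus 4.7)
The plan is to apply \Cref{cor:improper2D} to the action of $A = A_\Gamma$ on its Deligne complex $\Deligne$. By Charney--Davis \cite{CharneyDavisCAT0}, $\Deligne$ is a 2-dimensional CAT(0) square complex, and its vertex stabilizers are conjugates of standard spherical parabolic subgroups; in the triangle-free setting these are trivial, infinite cyclic, or 2-generator dihedral Artin groups $A(m)$ corresponding to edges of $\Gamma$ labeled $m \geq 2$.

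First I would verify the hypothesis of \Cref{cor:improper2D} by showing that every finitely generated subgroup of a vertex stabilizer is either virtually abelian or has uniform exponential growth bounded below by a common $w_0 > 1$. The trivial, cyclic, and $A(2) \cong \Z^2$ cases are immediate. For $m \geq 3$, use the central extension $1 \to Z(A(m)) \to A(m) \to Q_m \to 1$, where $Z(A(m)) \cong \Z$ and $Q_m$ is a hyperbolic virtually free group (e.g.\ $Q_m \cong \Z/2 \ast \Z/m$ for odd $m$). A finitely generated $H \leq A(m)$ projects to a finitely generated $\bar H \leq Q_m$. Since $Q_m$ is hyperbolic, $\bar H$ is either virtually cyclic---in which case $H$ is a central $\Z$-extension of a virtually cyclic group, hence virtually $\Z^2$---or has uniform exponential growth bounded below by \cite{Koubi}, giving $w(H) \geq w(\bar H) \geq w_0(m) > 1$. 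Since $\Gamma$ has only finitely many edge labels, there is a common uniform $w_0$.

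Invoking \Cref{cor:improper2D} yields three cases: (i) $w(G) \geq \min\{\sqrt[600]{2}, w_0\}$, (ii) $G$ is virtually abelian, or (iii) $G$ stabilizes a flat or line $E \subseteq \Deligne$. In case (iii) consider the natural map $\rho: G \to \Isom(E)$ and let $K = \ker\rho$ be the pointwise stabilizer of $E$. Vertices of $\Deligne$ of type $\emptyset$ (cosets of the trivial parabolic) have trivial stabilizer; since every 2-cell of $\Deligne$ has such a vertex on its boundary, when $E$ is a 2-flat the map $\rho$ is injective. When $E$ is a line, $K$ is an intersection of edge stabilizers along $E$, which is at most infinite cyclic.

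The main obstacle is concluding that $G$ is virtually abelian in case (iii), since the action of $A$ on $\Deligne$ is not proper and Bieberbach's theorem cannot be applied directly to $\rho(G)$. I plan to address this via a structural fact for 2-dimensional FC-type Artin groups: subgroups stabilizing a 2-flat in $\Deligne$ are commensurable with a rank-2 subgroup arising from either commuting standard generators (an edge of $\Gamma$ labeled $2$) or a centralizer of a central element inside some $A(m)$, both of which are virtually abelian, so $G \leq \stab_A(E)$ is virtually abelian. For a line $E$, $\rho(G) \leq \Isom(\R)$ is automatically virtually abelian; combined with the fact that $G$ acts on its cyclic kernel by $\pm 1$, one obtains a finite-index subgroup of $G$ that is a central $\Z$-extension of a virtually abelian group, and a careful analysis of this extension via \Cref{lem:finite-by-abelian} then concludes that $G$ is virtually abelian.
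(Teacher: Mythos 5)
Your proposal follows the same overall architecture as the paper's proof --- apply \Cref{cor:improper2D} to the action of $A$ on its Deligne complex and handle the vertex-stabilizer hypothesis and the flat/line-stabilizer output separately --- but deviates in two places, one of which is a genuine gap.

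\textbf{Vertex stabilizers.} The paper establishes the required hypothesis via \Cref{lem:Rank2Artin}, which lets a finitely generated subgroup of $A(m)$ act \emph{freely} on the CAT(0) square complex $\R \times T$ of \cite{HJP:cube2DArtin} and then quotes Kar--Sageev, obtaining the uniform constant $\sqrt[4]{2}$. You instead pass to the central quotient $Q_m = A(m)/Z(A(m))$ and invoke Koubi's theorem. That route is valid in spirit, and the claim $w(H) \geq w(\bar H)$ for a surjection onto $\bar H$ is correct. But Koubi's lower bound depends on the hyperbolicity constant of $Q_m$, which is not uniform in $m$; merely observing that $\Gamma$ has finitely many edge labels yields \emph{some} $w_0 > 1$ but does not give the explicit $\sqrt[600]{2}$ in the statement. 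To recover the constant, you should exploit the Bass--Serre tree of the virtually free group $Q_m$ and \Cref{lem:treeFreeSemigroup} to get a uniform $\sqrt[4]{2}$, or simply cite \Cref{lem:Rank2Artin}.

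\textbf{The $2$-flat case.} You correctly flag that $\rho(G)\leq\Isom(\R^2)$ need not be discrete a priori, so Bieberbach cannot be invoked blindly; but the structural dichotomy you propose (that flat stabilizers are commensurable with subgroups coming from a label-$2$ edge or a centralizer inside some $A(m)$) is not established and is stronger than what is needed. The missing step is much more elementary: every square of $\Deligne$, hence of $\Hull(E)$, has a vertex whose stabilizer in $A$ is trivial. Therefore the \emph{pointwise} stabilizer in $G$ of any square of $\Hull(E)$ is trivial, so each setwise square-stabilizer embeds in the finite group $\Isom([0,1]^2)$. Since $\Hull(E)$ is a locally finite Euclidean square complex, this means $G$ (which equals $\rho(G)$ here, as you observed) acts properly on $\Hull(E)\subseteq\R^2$, hence is a \emph{discrete} subgroup of $\Isom(\R^2)$, and Bieberbach applies. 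As written, your argument has a hole precisely where the paper's does the work.

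\textbf{The line case.} Your handling --- $\rho(G)\leq\Isom(\R)$ is automatically virtually abelian, the kernel is cyclic, $G$ acts on the cyclic kernel through $\pm 1$, and the resulting central $\Z$-extension is virtually abelian --- matches the paper's argument.

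In summary: the skeleton is right and the line case is fine, but the vertex-group reduction should use the tree/square-complex action rather than Koubi to retain the stated constant, and the $2$-flat case needs the properness-of-the-hull argument rather than an unproven structural claim.
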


The base case of the theorem is the following.
\begin{lem}[Rank $2$ Artin groups]
	\label{lem:Rank2Artin}
	Any finitely generated subgroup of a rank $2$ Artin group is either virtually abelian or has uniform exponential growth bounded by $\sqrt[4]{2}$.
\end{lem}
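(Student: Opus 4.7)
The plan is to split on the value of $m$ and apply the tree lemma (\Cref{lem:treeFreeSemigroup}) in the non-abelian case. The rank $2$ Artin group $A_m$ with defining relation of length $m$ is $A_2 = \mathbb{Z}^2$ when $m=2$, so every finitely generated subgroup is abelian. Assume $m \geq 3$ from now on.

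For $m \geq 3$ the dihedral Artin group $A_m$ is a torus-knot-type group: it has infinite cyclic center $Z$, and admits an amalgamated free product decomposition $A_m \cong \langle p \rangle \ast_Z \langle q \rangle$, where $\langle p\rangle, \langle q \rangle$ are infinite cyclic and $Z$ has finite index in each factor. This is visible by taking $p$ and $q$ to be appropriate powers of the Garside element $\Delta = \underbrace{aba\cdots}_m$; the standard presentation of $A_m$ is equivalent to $\langle p, q \mid p^\alpha = q^\beta \rangle$ for suitable $\alpha, \beta$ depending on the parity of $m$. This decomposition gives a cocompact action of $A_m$ on the associated Bass--Serre tree $T$, whose vertex stabilizers are conjugates of $\langle p \rangle$ or $\langle q \rangle$ and whose edge stabilizers all equal the center $Z$.

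Now let $G \leq A_m$ be finitely generated and choose any finite generating set $S$. Apply \Cref{lem:treeFreeSemigroup} to the isometries in $S$ acting on $T$. In the first case, $\langle S \rangle$ contains a $4$-short free semigroup, so $w(G, S) \geq \sqrt[4]{2}$ independently of $S$. If $G$ fixes a vertex of $T$, then $G \leq \mathbb{Z}$ is cyclic. If $G$ stabilizes a line $\ell \subset T$, consider the restriction homomorphism $\mathrm{Stab}(\ell) \to \mathrm{Isom}(\ell)$: its kernel is contained in the intersection of the edge stabilizers along $\ell$, which equals the central subgroup $Z$, and its image lies in the discrete simplicial isometry group $\mathbb{Z} \rtimes \mathbb{Z}/2$ of $\ell$. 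Thus $\mathrm{Stab}(\ell)$ is a central extension of a virtually infinite cyclic group by $\mathbb{Z}$, hence virtually $\mathbb{Z}^2$, and so the finitely generated subgroup $G \leq \mathrm{Stab}(\ell)$ is virtually abelian.

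The main obstacle is pinning down the amalgamated free product structure of $A_m$ with central amalgamated subgroup; once this is recorded, the centrality of $Z$ controls both vertex and line stabilizers, and \Cref{lem:treeFreeSemigroup} closes the argument. This lemma is then used as the base case for the inductive proof of \Cref{thm:FCtypeArtin} via the action of a triangle-free Artin group on its Deligne complex, whose vertex stabilizers are conjugates of standard parabolics of rank at most $2$.
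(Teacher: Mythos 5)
Your argument takes a genuinely different route from the paper's. The paper cites \cite{HJP:cube2DArtin} for the geometric action of a rank~$2$ Artin group on a CAT(0) square complex isometric to $\R\times T$, uses torsion-freeness from \cite{Deligne} to get a free action of any subgroup, and then quotes Kar--Sageev's theorem for that special cube complex. You instead pass to a Bass--Serre tree for the dihedral Artin group and exploit centrality of the edge group, relying only on \Cref{lem:treeFreeSemigroup}; this is more self-contained and bypasses Kar--Sageev entirely. The two approaches produce the same $\sqrt[4]{2}$ bound.

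That said, your description of the group structure is wrong when $m$ is even, $m\geq 4$. In that case $A_m$ is \emph{not} a torus-knot group $\langle p,q\mid p^\alpha=q^\beta\rangle$: the abelianization of $A_m$ for even $m$ is $\Z^2$, whereas any torus-knot group abelianizes to $\Z\oplus\Z/\gcd(\alpha,\beta)$, which is never $\Z^2$ unless $\alpha=\beta=0$ (a free group). Concretely, writing $m=2k$ and $c=ab$, the dihedral relation is equivalent to $[a,c^k]=1$, so
\[
A_{2k}\;\cong\;\langle c\rangle\ast_{\langle c^k\rangle}\langle a,c^k\rangle,
\]
where $\langle a,c^k\rangle\cong\Z^2$ and the amalgamated (central) subgroup $Z=\langle c^k\rangle$ has \emph{infinite} index in that vertex group. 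So your claims that both vertex groups are infinite cyclic with $Z$ of finite index, and consequently that a subgroup fixing a vertex is cyclic, fail for even $m$. This does not break the lemma's conclusion---a finitely generated subgroup of a $\Z^2$ vertex stabilizer is still virtually abelian, and since $Z$ is central all edge stabilizers are exactly $Z$, so your line-stabilizer argument is unaffected---but the claimed amalgam with two infinite cyclic factors and $Z$ of finite index in each is correct only for odd $m$ (e.g.\ $A_3\cong\langle x,y\mid x^2=y^3\rangle$ with $x=\Delta$, $y=ab$). You should treat the even and odd cases separately, or simply observe that in all cases vertex stabilizers are abelian and edge stabilizers equal the center.
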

\begin{proof}
	Any rank $2$ Artin group $A$ acts geometrically on a CAT(0) square complex isometric to $\R \times T$ where $T$ is a simplicial tree \cite[Theorem~5.1]{HJP:cube2DArtin}  (a similar complex is also described in \cite{BradyMcCammond}). 
	Also, $A$ is torsion-free \cite{Deligne}. Therefore, any subgroup of $A$ acts freely on $\R \times T$.
	The bounds on growth follow from \cite[Theorem~1]{KarSageev} in the special case where $X$ is isometric to $\R \times T$.
\end{proof}

\begin{proof}[Proof of Theorem~\ref{thm:FCtypeArtin}] 
The Deligne complex $\Deligne$ of a triangle-free Artin group is a CAT(0) square complex. Every edge in $\Deligne$ lies in a square and every square in $\Deligne$ has one vertex with trivial stabilizer joined to two vertices with cyclic stabilizers that are conjugates of subgroups generated by two distinct standard generators, and one vertex that is a conjugate of a rank $2$ standard parabolic subgroup (see \cite{CharneyDavisCAT0,MartinPrzytycki}).
 
Suppose that the exponential growth rate $w(G) < \sqrt[600]{2}$.  
If $G$ has a global fixed point in $\Deligne$ then $G$ is virtually abelian by \Cref{lem:Rank2Artin}. 
Hence, by \Cref{cor:improper2D}, $G$ stabilizes a flat or line $E \subset \Deligne$.  
Therefore, the image, $\bar G$, of $G$ in $\Isom(E)$ is virtually abelian. The kernel $\ker(G\to \bar G)$ has a subgroup $K$ of index at most $2$ that pointwise stabilizes the cubical convex hull of $E$. The group $K$ is contained in the pointwise stabilizer $\bigcap_{x\in \Hull(E)^{(0)}} \stab(x)$ of $\Hull (E)$ in $G$, which we will show is trivial. If $\Hull(E)$ contains any vertex with trivial stabilizer, then $\bigcap_{x\in \Hull(E)^{(0)}} \stab(x)$ is clearly also trivial. Suppose that $E$ is a combinatorial line that does not contain any vertices with trivial stabilizers. 
Such a line must contain a vertex with cyclic stabilizer, so $\ker(G \to \bar{G})$ is (possibly trivial) cyclic group.  Hence, $G$ is a cyclic-by-(virtually $\Z$) group, which is virtually abelian. 
\end{proof}

%%%%%%%%%%%%%%%%%%%%%%%%%%%%%%%%%%%%%%%%%%%%%%%%%%%%%%%%%%
\bibliographystyle{alpha}
\bibliography{UEG_Bib}

\end{document}